\documentclass[12pt]{amsart}
\usepackage{amsfonts}
\usepackage{amssymb, amscd, amsthm}
\usepackage{latexsym}
\usepackage{multirow}
\usepackage{amsmath}
\usepackage[all]{xy}
\usepackage{mathrsfs}
\usepackage{mathtools}
\usepackage{amsbsy}
\usepackage{amsfonts}
\usepackage{mathrsfs}
\usepackage{verbatim}
\usepackage{version}
\usepackage{color}

\newtheorem{theorem}{Theorem}[section]
\newtheorem{lemma}[theorem]{Lemma}
\newtheorem{thm}[theorem]{Theorem}
\newtheorem{prop}[theorem]{Proposition}
\newtheorem{rem}[theorem]{Remark}
\newtheorem{coro}[theorem]{Corollary}

\newtheorem{con}[theorem]{Conjecture}

\newcommand{\ra}{\rightarrow}
\newcommand{\mo}{\mathcal{O}}
\newcommand{\mf}{\mathcal{F}}
\newcommand{\mg}{\mathcal{G}}

\newcommand{\me}{\mathcal{E}}

\newcommand{\mj}{\mathcal{J}}

\newcommand{\mh}{\mathcal{H}}

\newcommand{\cl}{\mathcal{L}}
\newcommand{\mc}{\mathcal{C}}

\newcommand{\cd}{\mathcal{D}}
\newcommand{\cp}{\mathcal{P}}

\newcommand{\C}{\mathscr{C}}

\newcommand{\z}{\Theta}

\newcommand{\p}{\mathbb{P}}
\newcommand{\bz}{\mathbb{Z}}
\newcommand{\bq}{\mathbb{Q}}
\newcommand{\bc}{\mathbb{C}}
\newcommand{\br}{\mathbb{R}}

\newcommand{\kf}{\mathfrak{F}}

\newcommand{\End}{\operatorname{End}}

\newcommand{\Tor}{\operatorname{Tor}}
\newcommand{\Ima}{\operatorname{Im}}

\def\<{\langle}
\def\>{\rangle}

\begin{document}
\fontsize{12pt}{14pt} \textwidth=14cm \textheight=21 cm
\numberwithin{equation}{section}
\title{Lefschetz filtration and Perverse filtration on the compactified Jacobian.}
\author{Yao Yuan}
\subjclass[2010]{Primary 14D22, 14J26}
\thanks{The author is supported by BNSF 1262005.  }

\begin{abstract}Let $C$ be a complex integral curve with plannar singularities.  Let $J$ be the compactified Jacobian of $C$.  There are two filtrations on the cohomology group $H^*(J)$.  One is obtained by the nilpotent morphism defined by cupping a certain ample divisor on $J$, which we call the Lefschetz filtration.  To obtain the other filtration, we put $C$ into a family of curves $\mathcal{C}\rightarrow B$ so that $J$ can be embedded into a family $f:\mathcal{J}\rightarrow B$, and we let $B, \mathcal{C},\mathcal{J}$ be smooth.  Then $Rf_*(\mathbb{Q}_{\mathcal{J}})$ decomposes into a direct sum of its (shifted) perverse cohomologies.  Restricting this decomposition to fibers, we get a filtration on $H^*(J)$ called the perverse filtration.  We show in this paper that these two filtrations are opposite to each other as conjectured by Maulik-Yun.

~~~

\textbf{Keywords:} Compactified Jacobian, Perverse filtration, Fourier transform, generalized theta divisor, $sl_2$ representation.

\end{abstract}

\maketitle
\tableofcontents
\section{Introduction.}
\subsection{The Lefschetz filtration.}\label{LF}
Let $M$ be a smooth complex projective variety of dimension $n$ with $\omega$ an ample class in $ H^2(M,R)$\footnote{It suffices to let $M$ be a compact K\"ahler manifold with $\omega$ a K\"ahler class}, where $R=\bq/\br/\bc$.  Then $\omega$ induces a nilpotent endomorphism $L_{\omega}$ on $H^*(M)=\bigoplus_{k=0}^{2n} H^k(M,R)$ by cup product, i.e. $L_{\omega}(a)=a\cup \omega$ for any $a\in H^*(M,R)$.  By \cite[Proposition 1.6.1]{Del} there is a unique increasing filtration 
\begin{equation}\label{Leff}W_0H^*(M)\subset W_1H^*(M)\subset\cdots\subset W_{2n-1}H^*(M)\subset W_{2n}H^*(M)=H^*(M),\end{equation}
satisfying that (1) $L_{\omega}W_k\subset W_{k-2}$ and (2) the induced endomorphism on graded pieces $L_{\omega}^k:\text{Gr}_{n+k}^WH^*(M)\xrightarrow{\cong} \text{Gr}_{n-k}^WH^*(M)$ is an isomorphism for every $0\leq k\leq n$.   By Hard Lefschetz theorem we have 
\begin{equation}\label{Lefd}W_kH^*(M)=\bigoplus_{i\geq 2n-k}H^i(M,R).\end{equation}
Therefore $\text{Gr}_{n+k}^WH^*(M)=H^{n-k}(M,R)$ and cupping with $\omega^k$ gives an isomorphism $H^{n-k}(M,R)\cong H^{n+k}(M,R)$. 
We call the filtration in (\ref{Leff}) the \emph{Lefschetz filtration}.

By the Jacobson-Morozov theorem, we can find another endomorphism $\Lambda$ of $H^*(M)$ such that $(L_{\omega},H:=[L_{\omega},\Lambda],\Lambda)$ form an $sl_2$-triple.  We then have the weight decomposition 
\[H^*(M)=\bigoplus_{\lambda=-n}^nH_{\lambda}\]  
such that $H(a)=\lambda\cdot a$ for any $a\in H_{\lambda}$.  Denote by $PD:H^{n-k}(M,R)\xrightarrow{\cong} H^{n+k}(M,R)^{\vee}$ the Poincar\'e duality.   $PD$ and the isomorphism $H^{n-k}(M,R)\cong H^{n+k}(M,R)$ given by cupping $\omega^k$ give us an isomorphism $\widetilde{PD}: H^{k}(M,R)\xrightarrow{\cong} H^{k}(M,R)^{\vee}$,   We actually have that 
\[\Lambda=-\widetilde{PD}^{-1}\circ L_{\omega}^{\vee}\circ \widetilde{PD},~\text{and}~H_{\lambda}=H^{\lambda+n}(M,R),\]
where $L_{\omega}^{\vee}:H^k(M,R)^{\vee}\ra H^{k-2}(M,R)^{\vee}$ is the dual of $L_{\omega}$.
\begin{rem}\label{Lefs} For $M$ any projective variety not necessarily smooth and $\omega$ an ample class, we still have the filtration in (\ref{Leff}) by the nilpotency of $L_{\omega}$.  However (\ref{Lefd}) may not hold in general: for instance we may not have $\dim H^{n-k}(M,R)=\dim H^{n+k}(M,R)$.
\end{rem}
Now let $M=J_X^d$ be the Jacobian of a smooth complete curve $X$ parametrizing line bundles of degree $d$ on $X$.  $J_X^d$ are all isomorphic to each other.  The following locus 
\[\Theta:=\{G\in M|H^0(G)\neq 0\}\]
forms an ample divisor on $J^{g_X-1}$ usually called the \emph{theta divisor} in the literature.  Denote by $F_X^d$ the normalized universal bundle over $X\times J_X^d$, then 
\[\mo_{J^{g_X-1}}(-\Theta)\cong \det(Rp_*F_X^{g_X-1})\] 
with $p:X\times J_X^{g_X-1}\ra J_X^{g_X-1}$ the projection.

If $C$ is integral with plannar singularities,  we have the compatified Jacobian $J_C^d$ parametrizing rank 1 torsion free sheaves on $C$ with Euler characteristic $d-g_C+1$.  $J_C^d$ are also isomorphic to each other.  We also have a universal sheaf $F^d_C$ over $C\times J_C^d$ normalized at a smooth point $x\in C$.  Let $d=2g-1$, then $p_*F_C^d=Rp_*F_C^d$ and it is a vector bundle of rank $g$ over $J_C^d$.  We define $\z:=-c_1(\det(p_*F_C^d))\in H^2(J_C^d)$ (we will see in Proposition \ref{chen} this equality holds for all $d\geq 2g-1$).  Then cupping $\z$ is still a nilpotent endomorphism, hence by \cite[Proposition 1.6.1]{Del} there is a unique increasing filtration 
\begin{equation}\label{Leffs}W_0H^*(J_C^d)\subset W_1H^*(J_C^d)\subset\cdots\subset W_{2g_C-1}H^*(J_C^d)\subset W_{2g_C}H^*(J_C^d)=H^*(J_C^d).\end{equation}

We also call the filtration (\ref{Leffs}) the Lefschetz filtration.  However, (\ref{Lefd}) may fail since $J_C^d$ might be singular.  Let $J=J_C^{0}$.
\subsection{The perverse filtrations.}\label{PF}
Let $f:X\ra Y$ be a proper morphism between two nonsingular quasi-projective varieties 
By the well-known Decomposition theorem over $\bc$ (Th\'eor\`emes 6.25, 6.2.10 in \cite{BBD}) we have 
\[Rf_{*}\bq_X\cong \bigoplus_{k\geq0} \prescript{p}{}{\mh}^{k}(Rf_{*}\bq_X)[-k],\]
where $\prescript{p}{}{\mh}^{k}(Rf_{*}\bq_X)$ are all semisimple perverse sheaves on $Y$.  Define
\begin{equation}\label{tauper}\prescript{p}{}{\tau}_{\leq i}(R\pi_{*}\bq_{X}):=\bigoplus_{j\leq i} \prescript{p}{}{\mh}^{j}(Rf_{*}\bq_X)[-j].\end{equation}
Restrict (\ref{tauper}) to each fiber $X_s$, and we get the \emph{perverse filtration} (with respect to $f$) $P_{\bullet}$ on the cohomology group $H^*(X_s,\bq)=\bigoplus_{k\geq 0}H^k(X_s,\bq)$ such that
\begin{equation}\label{pfdef}\bigoplus_{k\geq 0}P_{\leq i}H^k(X_s,\bq)[-k]=\prescript{p}{}{\tau}_{\leq i}(R\pi_{*}\bq_{X})|_s.\end{equation}
Notice that in general the perverse filtration (\ref{pfdef}) depends on the map $f:X\ra Y$ and might not be intrinsic on $X_s$.

Let $C$ be an integral curve with plannar singularities.  We can put $C$ into a family.  Let $\mc\ra B$ be a family of integral curves with plannar singularities, such that the fiber $\mc_0$ over $0\in B$ is isomorphic to $C$.  There is also a relative compactified Jacobian $\mj\ra B$ fiber-wise parametrizing rank 1 torsion-free sheaves of Euler characteristic $1-g_C$.  We ask $\mc,B,\mj$ are all smooth.  In \cite{MY} Maulik-Yun construct a canonical perverse filtration on $H^*(\mj_0)$ with $\mj_0\cong J$, which does not depend on the choice of $\mc\ra B$ as long as some conditions are satisfied for this family (see \cite[Theorem 1.1]{MY}).  They also pose the following conjecture.

\begin{con}[Conjecture 2.17 in \cite{MY}]\label{conj}Let $C$ be an integral curve with plannar singularities.   The perverse filtration $P_{\leq i}$ on $H^*(J)$ defined in (\ref{pfdef}) via a suitable family $\mj\ra B$ is opposite to the Lefschetz filtration $W_{\leq i}$ defined in (\ref{Leffs}).\end{con} 

In \cite{Ren} Rennemo gives a decomposition 
\begin{equation}\label{Ddemp}H^*(J)=\bigoplus_{k\geq 0}D_kH^*(J)\end{equation} 
and shows that $P_{\leq i}H^*(J)=\bigoplus_{k\leq i}D_kH^*(J)$ (\cite[Proposition 7.1]{Ren}).  We will give a review on Rennemo's result in \cite{Ren} in \ref{Renres}

\subsection{The main result.}
In this paper we prove Conjecture \ref{conj} for $C$ a complex curve.  Moreover we describe concretely the $sl_2$-triple associated to cupping $\z$.  Our main strategy is to use the decomposition (\ref{Ddemp}) given by Rennemo and the Fourier transform $\kf:H_*(J)\ra H_*(J)$.  However, for $C$ singular, the usual definition of Fourier transform doesn't work although we have a Poincar\'e sheaf on $J\times J$ constructed by Arinkin (\cite{Ari2}).  Luckily we manage to use bivariant theory to get a right definition of $\kf$ (see \S\ref{DFT}).

We define two operators $e,f$ on $H_*(J)$.  
\[e:H_*(J)\ra H_*(J),~~a\mapsto \z\cap a;\]
\[f:=-\kf\circ e\circ \kf^{-1}=-\kf\circ e\circ\kf_1.\]  

We denote by $e^{\vee},f^{\vee}$ the dual maps of $e,f$ respectively on $H^*(J)$.  Notice that  $e^{\vee}(\alpha)=\alpha\cup\Theta$ for any $\alpha\in H^*(J)$.

Our main result is as follows.
\begin{thm}[Corollary \ref{maincoro}]$(e^{\vee},  [e^{\vee},f^{\vee}],f^{\vee})$ forms a $sl_2$-triple on $H^*(J)$, whose eigenvalue decomposition coincides with the decomposition (\ref{Ddemp})on $H^*(J)$.  

In particular, the Lefschetz filtration induced by cupping $\z$ on $H^*(J)$ is $W_{k}(H^*(J))=\bigoplus_{j\geq 2g-k}D_jH^*(J)$ and it is opposite to the perverse filtration $P_{\leq m}H^*(J)=\bigoplus_{j\leq m} D_{j}H^*(J)$, i.e. $W_{k}(H^*(J))\cap P_{\leq m}H^*(J)={0}$ if $m+k<2g$.   
\end{thm}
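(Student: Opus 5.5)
The strategy is to reduce everything to homology, where the Fourier transform $\kf$ is available via bivariant theory, and to compare $e$ and $f$ with Rennemo's decomposition (\ref{Ddemp}). Let $D_kH_*(J)$ be the dual pieces. The first step is to show that $e$ is homogeneous of degree $-1$ and $f$ of degree $+1$ for the grading $k$ (up to a convention on signs and on the indexing). For $e$ this should follow from Rennemo's description: $D_k$ is built from the action of the tautological classes on the point class or the fundamental class. Concretely, $\z=-c_1(\det p_*F)$ is expressed through the Chern character of the universal sheaf, and cap product with it shifts the $D$-index by exactly one. By Grothendieck--Riemann--Roch, $\z$ is the component of the tautological class of "perverse degree one". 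For $f=-\kf\circ e\circ\kf^{-1}$, I would use the fact (to be established in \S\ref{DFT}) that $\kf$ interchanges homological degree with the $D$-grading. More precisely, $\kf$ maps $D_kH_i$ to $D_{i'}H_{k'}$ in the way it does for smooth Jacobians, so that it swaps the grading by cohomological degree with the perverse grading. Then $f$ raises $D$-degree by one.

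The second step is to compute $h:=[e,f]$ and show that it acts on $D_kH_*(J)$ by the scalar $k-g$ (or $g-k$, depending on normalization). I would reduce to the smooth case by a specialization or deformation argument. Rennemo's decomposition is defined uniformly in families via tautological classes, and the operators $e,\kf$ are given by bivariant classes defined over the whole family $\mj\ra B$, so the commutator relation $[e,f]=h$ is an identity among bivariant classes. It can therefore be checked on a general smooth fiber. There, Mukai's classical theorem says that $\z$, $\kf$ and $-\kf\circ\z\circ\kf^{-1}$ form the standard $sl_2$-triple attached to the theta divisor, with weight decomposition equal to the cohomological degree. That degree matches the perverse/Rennemo grading because on a smooth fiber the perverse filtration is the degree filtration. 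Alternatively, I would try to compute $[e,f]$ directly from the composition of correspondences: $e\circ\kf$ versus $\kf\circ e$ differ by multiplication with the class of the pulled-back theta on $J\times J$. Expanding the Arinkin Poincaré sheaf's Chern character then gives the Euler-type operator. I expect this step to be the main obstacle, namely making the bivariant Fourier transform behave well under specialization and matching its normalization with Rennemo's grading.

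Once $e$ lowers, $f$ raises, and $h$ acts by $k-g$ on $D_k$, the triple $(e,h,f)$ is an $sl_2$-triple on $H_*(J)$. Dualizing gives the triple $(e^\vee,[e^\vee,f^\vee],f^\vee)$ on $H^*(J)$, with eigenspaces $D_kH^*(J)$ and eigenvalue $g-k$ up to sign. Finite-dimensional $sl_2$ theory then says that $e^{\vee k}:D_{g-k}\ra D_{g+k}$ is an isomorphism. By the uniqueness in \cite[Proposition 1.6.1]{Del}, the Lefschetz filtration is therefore $W_k=\bigoplus_{j\geq 2g-k}D_j$. Combining this with $P_{\leq m}=\bigoplus_{j\le m}D_j$ \cite[Proposition 7.1]{Ren}, the two sums share no index $j$ when $m<2g-k$, so $W_k\cap P_{\le m}=0$ for $m+k<2g$. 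This is the asserted oppositeness.
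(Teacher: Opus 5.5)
Your last paragraph is correct: dualizing, applying $sl_2$-theory and Deligne's uniqueness, and then using $P_{\le m}=D_{\le m}$ is exactly how the paper deduces Corollary \ref{maincoro} from Theorem \ref{main}. The gap is that nothing before it links Rennemo's $D$-grading on the singular fibre to $e$ and $f$.

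In Step 1 you assert, without argument, that $e$ and $f$ are homogeneous for the particular splitting $D$, and not merely for the perverse filtration. You also give the shift as $1$. It is $2$, since otherwise your own conclusion $e^{\vee k}\colon D_{g-k}\ra D_{g+k}$ could not hold. In the paper, $e(D_k)\subset D_{k-2}$ is an output of Theorem \ref{main}, not an input. Likewise, \S\ref{DFT} only shows that $\kf$ is well defined and invertible (Proposition \ref{inverse}). The claim that $\kf$ swaps homological degree with the $D$-grading is not proved there. It is a much deeper statement than the one you are trying to prove.

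Step 2 fails as stated. The $D$-grading operator is not a bivariant class over $B$: Rennemo's grading is defined through the Hilbert schemes $C^{[n]}$ of the single curve $C$. An identity of correspondences over $\mj\times_B\mj$ also cannot be checked on a general fibre, for two reasons:
\begin{itemize}
\item The specialization $H^*(J)\ra H^*(J_t)$ is in general not injective. A class in $D_kH^i(J)$ with $i>k$ (coming from higher stalk cohomology at $0$ of the perverse constituents) maps into $P_{\le k}H^i(J_t)=0$.
\item Classes supported over the discriminant can act nontrivially on $H_*(J)$ while vanishing generically.
\end{itemize}
So Mukai's theorem on $J_t$ tells you nothing about $J$. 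Your \emph{direct} alternative also breaks down: $P$ is not a line bundle, so $ch^{\tau}(P)$ has no exponential expansion.

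The missing idea is the route through Hilbert schemes:
\begin{itemize}
\item[(i)] Proposition \ref{etade} identifies $D_k$ with the $(n-k)$-eigenspace of $\phi=\mu_+[pt]\mu_-[C]$ on $\mh_n\cong H_*(J)$.
\item[(ii)] Proposition \ref{chen} ($ch_i(E_n)=0$ for $i\ge 2$) gives, for $n\ge 2g$, the formula $(AJ_{n,*})^{-1}(\beta)=\sum_j\frac{(-1)^j}{j!}\omega_n^{n-g-j}\cap AJ_n^!(e^j\beta)$.
\item[(iii)] Proposition \ref{mu-c} computes $\mu_-[C]$ on such classes. It uses $q^*\omega_{n+1}=p^*\omega_n+\sigma^*[x]$ and the factorization $(q\times id_J)^*P_{n+1}\cong(p\times id_J)^*P_n\otimes(\sigma\times id_J)^*F$ on the flag Hilbert scheme. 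Pushing $ch^{\tau}(F)$ forward along $C$ contributes $ch_{>0}(E_m)=-\z$, which produces the term $\kf(-\z\cap\kf^{-1}\beta)=f(\beta)$.
\end{itemize}
Comparing coefficients of powers of $\omega_{n+1}$ in $\phi(\gamma)=(n+1-k)\gamma$ then gives $fe^{j+1}-e^{j+1}f=(j+1)(k-g-j)e^j$ on $D_kH_*(J)$. This identity is exactly what your argument needs, and your plan does not supply it.
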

\begin{rem}Why we need $C$ to be a complex curve?  There are two reasons:

(1) We only know bivariant theory over $\bc$;

(2) In the proof of Proposition \ref{chen}, we need the compactified Jacobian $J_C$ to admit a resolution of singularities.
\end{rem}

\subsection{The plan of the paper.} In \S 2.1 we review Rennemo's result and provide some relations of the Chern classes of the bundle $p_*(F_C^d)$ on $J_C^d$.  Those relations are very important to our main theorem.  In \S 2.2 we describe Rennemo's decomposition as an eigenvalue decomposition of some linear functor.  In \S 2.3 we prove some properties for the Poincar\'e sheaf which will be used to prove our main theorem.  In \S 3.1 we define the Fourier transform $\kf$ and show that it is an isomorphism on $H_*(J)$.  In \S 3.2 we prove our main theorem.  There are two appendices.  \S\ref{appB} is devoted to a brief review of bivariant theory and Riemann-Roch for singular varieties.  \S\ref{appA} gives the proof of Proposition \ref{chen}. 
 
Our paper contains many technic proofs using bivariant theory and hence the readers are suggested to read the review in the appendix before going through all those details.

\subsection{Notations, Conventions and Prelimilaries.}\label{NC}
\begin{itemize}
\item Let $C$ be a proper integral curve over $\bc$ with planar singularities.  Denote by $g$ its arithmetic genus.  Let $C^{sm}\subset C$ be the open subset consisting of smooth points of $C$.  We fix a smooth point $x\in C^{sm}$. 
\item Let $J$ be the compactified Jabobian which parametrizes torsion free rank 1 sheaves $G$ on $C$ with $\chi(G)=1-g$ (i.e. $\text{deg}(G)=0$).
\item Let $F$ be the universal sheaf over $C\times J$ normalized at $x$, i.e. $F_x\cong \mo_J$. 
\item Let $C^{[n]}$ be the Hilbert scheme of length $n$ subschemes of $C$.  Let $C^{[n,n+1]}\subset C^{[n]}\times C^{[n+1]}$ be the flag Hilbert scheme parametrizing pairs $(Z,Z')$ such that $Z\subset Z'$.  
\item We have the Abel-Jacobi map $AJ_n:C^{[n]}\ra J$ sending $Z$ to $I_Z\otimes \mo_C(nx)$ where $I_Z\subset \mo_C$ is the ideal sheaf of $Z$.  

\item By Arinkin's result (\cite[Theorem C]{Ari2}), there is a Poincar\'e sheaf $P$ on $J\times J$ which is a maximal Cohen-Macaulay sheaf.  Moreover $P$ induces the audoduality of $J$, i.e., the Fourier-Mukai functor 
\[\kf:D^b(J)\ra D^b(J);~~\mg\mapsto Rp_{1,*}(p_2^*(\mg)\otimes P)\]
is an equivalence of categories.  We may assume that $(AJ_1\times id_J)^*P\cong F$. 

\item Let $\mc\ra B$ be a family of integral curves with planar singularities such that the fiber $\mc_0$ over $0\in B$ is $C$ and $\mc_t$ is smooth for generic $t\in B$.  We may assume there is a section $s:B\hookrightarrow \mc^{sm}$ such that $s(0)=x$.  
\item For simplicity, we let $B$ have trivial tangent bundle.

\item Let $\mj,\mc^{[n]},\mc^{[n,n+1]}$ be the relative version of $J,C^{[n]},C^{[n,n+1]}$ over $B$ respectively.  We may also assume $B,\mc,\mj,\mc^{[n]},\mc^{[n,n+1]}$ are all smooth (see e.g. \cite[Corollary 15]{She}).  
Hence $C,J,C^{[n]},C^{[n,n+1]}$ are all locally complete intersections.  
\item Let $\cp$ be the relative Poincar\'e sheaf over $\mj\times_B\mj$.  
\item Let $\mf$ be the relative universal sheaf over $\mc\times_B \mj$ normalized at the section $s$.
\item We have the relative Abel-Jacobi map $\widetilde{AJ}_n:\mc^{[n]}\ra \mj$ and we may assume that $(\widetilde{AJ}_1\times_B id_{\mj})^*\cp\cong \mf$.
\item All cohomology and homology groups are of $\bq$-coefficients unless otherwise stated.
\item We denote by $f^*,f_*,\otimes$ the usual pullback, pushforward and tensor product, while we use $Lf^*,Rf_*,\otimes^L$ to denote their derived forms.
\end{itemize}
\subsection{Acknowledgements.}I thank Xuezhi Zhao and Fang Sun for their help in topology.  I thank the referees.    


\section{Some properties on $J$ and $C^{[n]}$}
\subsection{A decomposition on $H_*(J)$.}\label{Renres}
In this subsection we review some results of Rennemo in \cite{Ren}.  Let 
\[V(C):=\bigoplus_{n,k\geq 0}H_k(C^{[n]}).\]
Rennemo has defined two pairs of operators $\mu_{\pm}[pt],\mu_{\pm}[C]$ in \cite[\S 2]{Ren} on $V(C)$.  We recall the definitions of them as follows.  

We have fixed a smooth point $x\in C$, hence we have an inclusion $i:C^{[n]}\hookrightarrow C^{[n+1]}$ sending $Z$ to $Z\cup\{x\}$ and $i$ is a regular embedding by \cite[Lemma 2.1]{Ren}.  We define two maps $\mu_{\pm}[pt]:H_k(C^{[n]})\ra H_{k-1\pm1}(C^{[n\pm1]})$ by letting $\mu_{+}[pt]=i_*$ and $\mu_{-}[pt]=i^!$ where $i^!$ is the intersection pullback map. 

For the flag Hilbert scheme $C^{[n,n+1]}$, we have three natural maps $p,\sigma,q$ as in the following diagram, sending $(Z_n,Z_{n+1})$ to $Z_n,Z_{n+1}\setminus Z_{n},Z_{n+1}$ respectively.  
\begin{equation}\label{3maps}\xymatrix{&C^{[n,n+1]}\ar[ld]_{p}\ar[d]_{\sigma}\ar[rd]^{q}&\\C^{[n]}&C&C^{[n+1]}}\end{equation}
We define two maps $\mu_{\pm}[C]:H_k(C^{[n]})\ra H_{k+1\pm1}(C^{[n\pm1]})$ by letting $\mu_{+}[C]=q_*p^{!}$ and $\mu_{-}[C]=p_*q^!$, where $p^!$ and $q^!$ are refined Gysin pullbacks defined via the regular embeddings $C^{[n]}\hookrightarrow\mc^{[n]}$ and $C^{[n+1]}\hookrightarrow\mc^{[n+1]}$ respectively (see \cite[\S 6.2 and Ex. 19.1.10]{Ful}).  By \cite[Lemma 4.1]{Ren}, $p^!$ ($q^!$, resp.) is independent of the choice of $\mc^{[n]}$ ($\mc^{[n+1]}$, resp.). 

In order to pass from homology to cohomology,  we only need to dualize the vector space $V(C)$ and operators $\mu_{\pm}[pt],\mu_{\pm}[C]$.  Let 
$$V^c(C)=\bigoplus_{k,n\geq0}H^k(C^{[n]})=V(C)^{\vee},$$ 
and let $\mu_{\pm}^c[pt]=\mu_{\mp}[pt]^{\vee},~\mu_{\pm}^c[C]=\mu_{\mp}[C]^{\vee}$.  Notice that we are using the same notations as in \cite{Ren}. 

We quote some of Rennemo's results as the following two theorems.
\begin{thm}[Theorem 1.2 + Lemma 3.2 in \cite{Ren}]\label{Ren1}~~

(i) The operators $\mu_{\pm}[pt],\mu_{\pm}[C]$ satisfy the commutation relations
\[[\mu_{-}[pt],\mu_{+}[C]]=[\mu_{-}[C],\mu_{+}[pt]]=id,\]
and all other pairs of operators commute.

(ii) Let $W=\ker\mu_{-}[pt]\cap \ker\mu_{-}[C]$.  Then the natural map
\[W\otimes \bq[\mu_+[pt],\mu_+[C]]\ra V(C)\]
is an isomorphism.

(iii) The Abel-Jacobi push-forward map $AJ_*:V(C)\ra H_*(J)$ induces an isomorphism $W\cong H_*(J)$.

(iv) The Abel-Jacobi push-forward map 
\[AJ_{n,*}:\ker \mu_{-}[pt]\cap H_{*}(C^{[n]})\ra H_*(J)\]
is injective for any $n$, and is an isomorphism for $n\geq 2g$.
\end{thm}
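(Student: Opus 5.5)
The statement collects Theorem 1.2 and Lemma 3.2 of \cite{Ren}. To reprove it I would get the algebraic parts (ii) and (iv) formally from (i) and from the geometry of $AJ_n$ for large $n$. The real work is the commutation relations in (i).

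For (i), each composite such as $\mu_-[pt]\mu_+[C]$ or $\mu_+[C]\mu_-[pt]$ is a convolution of correspondences. Examples are $C^{[n,n+1]}\times_{C^{[n+1]}}C^{[n,n+1]}$, and the fibre product of $C^{[n,n+1]}$ with the graph of $i$. I would compare the two orders of composition by a fibre-square argument in bivariant theory. All Gysin maps are refined Gysin maps for regular embeddings into the smooth relative spaces $\mc^{[n]}$, so the excess intersection formula \cite[\S 6]{Ful} applies. Everything can then be computed on the smooth total spaces $\mc^{[n]}$, $\mc^{[n,n+1]}$ and restricted to the fibre over $0$; \cite[Lemma 4.1]{Ren} says the result is independent of the family. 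There the two fibre products agree away from a "diagonal'' component, on which the added point is $x$ itself. On that component the excess bundle is trivial, and the contribution is the intersection number of $[pt]$ with $[C]$, which is $1$. This gives $[\mu_-[pt],\mu_+[C]]=id$, and the same argument with the roles of $pt$ and $C$ swapped gives the other relation. The vanishing commutators come from the symmetry of the double flag correspondences: for $\mu_+[C]$ with itself, the scheme $C^{[n,n+1,n+2]}$ is symmetric under swapping the order in which the two points are added. I expect (i) to be the main obstacle, and within it the identification of the diagonal excess contribution on singular $C$. I would first try to deduce it by specialization from the generic smooth fibre of $\mc\to B$, where the corresponding relations are the classical Grojnowski--Nakajima ones.

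For (ii), note that $\mu_-[pt]$ and $\mu_-[C]$ lower $n$, so they are locally nilpotent on $V(C)$. By (i) they commute, and with $\mu_+[pt],\mu_+[C]$ they satisfy the relations of the Weyl algebra in two variables. A Stone--von Neumann argument then gives the claimed isomorphism. Concretely, the operator
\[\pi=\sum_{a,b\ge0}\frac{(-1)^{a+b}}{a!\,b!}\,\mu_+[C]^a\mu_+[pt]^b\mu_-[C]^b\mu_-[pt]^a\]
is a projector onto $W$, and the monomials in $\mu_+[pt],\mu_+[C]$ applied to $W$ span $V(C)$ and are independent. The finite-sum checks are routine once the pairing of annihilators with creators is fixed by (i).

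For (iv) and (iii), I would use that $I_{Z\cup\{x\}}\otimes\mo_C((n+1)x)\cong I_Z\otimes\mo_C(nx)$, so $AJ_{n+1}\circ i=AJ_n$. For $n\ge 2g$, $AJ_n$ is the projective bundle $\p(p_*F^{\vee}\text{-type})\to J$ of fibre $\p^{n-g}$, because every rank one torsion free sheaf of degree $n\ge 2g$ has vanishing $H^1$. Inside it, $i(C^{[n-1]})$ is a hyperplane subbundle, namely the sections vanishing at $x$. By the projective bundle formula, $\ker i^!$ is the top summand $H_*(J)\otimes[\p^{n-g}]$, and $AJ_{n,*}$ maps it isomorphically to $H_*(J)$. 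For general $n$, take $a\in\ker\mu_-[pt]\cap H_*(C^{[n]})$. Since $\mu_\pm[pt]$ commute, $\mu_+[pt]^Na$ stays in $\ker\mu_-[pt]$. It has the same Abel--Jacobi image as $a$, and $\mu_+[pt]^N$ is injective by (ii). Taking $n+N\ge 2g$ therefore gives injectivity. Finally, (ii) identifies $\ker\mu_-[pt]$ with $W\otimes\bq[\mu_+[C]]$. Comparing graded dimensions with $H_*(J)$ in the stable range, together with the injectivity just proved, shows that $AJ_*$ restricted to $W$ is an isomorphism onto $H_*(J)$, which is (iii).
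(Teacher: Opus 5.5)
The paper gives no proof of this statement. It is imported from \cite{Ren} and used as a black box, so your text has to stand as an independent reproof. The formal part is sound: the Stone--von Neumann argument for (ii), including your projector $\pi$, is correct. But part (i), which carries all the content, is not actually proved, and the deductions of (iii) and (iv) contain two errors.

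\textbf{Part (i).} First, you never treat the hard vanishing relation. An operator commutes with itself, so the symmetry of $C^{[n,n+1,n+2]}$ ``for $\mu_+[C]$ with itself'' proves nothing. The relations involving $\mu_\pm[pt]$ reduce to intersections with the Cartier divisors $i(C^{[m]})$, using for instance $q^{-1}(i(C^{[n]}))=p^{-1}(i(C^{[n-1]}))+\sigma^{-1}(x)$ (cf.\ Lemma \ref{comw}). By contrast, $[\mu_-[C],\mu_+[C]]=0$ compares $p_*q^!q_*p^!$ with $q_*p^!p_*q^!$, so it requires control of $C^{[n,n+1]}\times_{C^{[n+1]}}C^{[n,n+1]}$. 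Over subschemes supported at singular points, the fibres of $q$ are $\p(\mathrm{soc}\,\mo_{Z_{n+1}})$, which can be positive-dimensional: already $\p^1$ over $Z_{n+1}=\mathrm{Spec}\,\mo_{C,y}/\mathfrak{m}_y^2$ at a node $y$.

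Second, your fallback of deducing the relations ``by specialization from the generic smooth fibre'' fails. The specialization map $H_*(C_t^{[n]})\ra H_*(C^{[n]})$ factors through monodromy coinvariants and need not be surjective. So an operator identity on the nearby fibre only yields the identity on its image. Likewise, in the family picture, continuity from the generic fibre only controls those components of the relevant fibre products that dominate $B$.

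What is needed is a dimension/density statement: the fibre products entering the compositions have the expected dimension, and the loci lying over singular points of $C$ are nowhere dense in them (compare \cite[Lemma 6.3]{Ren}, quoted in the proof of Proposition \ref{comp}). Then the composed correspondences are determined by generic multiplicities, where the smooth-curve computation applies. Without this, (i) remains a heuristic.

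\textbf{Parts (iii)--(iv).} In the stable range you picked the wrong summand. Already for $J\times\p^{n-g}$ one has $i^!(a\otimes[\p^j])=a\otimes[\p^{j-1}]$ and $i^!(a\otimes[pt])=0$. Moreover $AJ_{n,*}$ kills $H_*(J)\otimes[\p^{n-g}]$ for $n>g$. In general $\ker i^!=\ker(\omega_n\cap(\cdot))$, because $i_*$ is injective. This kernel is the graph $\{\sum_i c_i(E_n)\,\omega_n^{n-g-i}\cap AJ_n^!(\beta)\}$ of (\ref{preAJ}), and $AJ_{n,*}$ maps it to $\beta$.

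Next, since $[\mu_-[pt],\mu_+[C]]=id$ while $[\mu_-[pt],\mu_+[pt]]=0$, (ii) gives $\ker\mu_-[pt]=W\otimes\bq[\mu_+[pt]]$, not $W\otimes\bq[\mu_+[C]]$ (compare Lemma \ref{mumu}). This is exactly what the argument needs. From $AJ_{n+1}\circ i=AJ_n$ one gets
$\ker\mu_-[pt]\cap H_*(C^{[n]})=\bigoplus_{k\le n}\mu_+[pt]^{n-k}(W\cap H_*(C^{[k]}))$, with $AJ_{n,*}\mu_+[pt]^{n-k}w=AJ_{k,*}w$.
Injectivity for every $n$, together with bijectivity for all $n\ge 2g$, then forces $W\cap H_*(C^{[k]})=0$ for $k>2g$ and makes $AJ_*|_W$ bijective. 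With $\mu_+[C]$ in place of $\mu_+[pt]$ there is no such compatibility with $AJ_*$.
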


\begin{thm}[Theorem 1.3 in \cite{Ren}]\label{Ren2}~~

(i) The operators $\mu^c_{\pm}[pt],\mu^c_{\pm}[C]$ satisfy the commutation relations
\[[\mu^c_{-}[pt],\mu^c_{+}[C]]=[\mu^c_{-}[C],\mu^c_{+}[pt]]=id,\]
and all other pairs of operators commute.

(ii) Let $W^c=V^c(C)/(\Ima\mu_{+}^c[pt]+\Ima \mu_{+}^c[C])$.  Then the natural map
\[\ker\mu^c_{-}[pt]\cap \ker\mu^c_{-}[C]\ra W^c,\]
and
\[(\ker\mu^c_{-}[pt]\cap \ker\mu^c_{-}[C])\otimes \bq[\mu_+[pt],\mu_+[C]]\ra V(C)\]
are isomorphisms.

(iii) The Abel-Jacobi pull-back map $AJ^*:H^*(J)\ra V(C)^c$ induces an isomorphism $H^*(J)\cong W^c$.
\end{thm}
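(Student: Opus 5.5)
The plan is to deduce Theorem \ref{Ren2} from Theorem \ref{Ren1} by duality, using only the formal properties of the operators. The first point is to make the duality finite-dimensional in each degree. Each $H_k(C^{[n]})$ is finite dimensional and vanishes for $k>2n$, and every operator $\mu_{\pm}[pt],\mu_{\pm}[C]$ is homogeneous of a fixed bidegree in $(n,k)$. So I will read $V^c(C)=V(C)^{\vee}$ as the graded dual $\bigoplus_{n,k}H_k(C^{[n]})^{\vee}=\bigoplus_{n,k}H^k(C^{[n]})$. All subspaces below are bigraded, and annihilators and double duals behave as in finite dimensions, one bidegree at a time.

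For (i), use $(AB)^{\vee}=B^{\vee}A^{\vee}$, which gives $[A^{\vee},B^{\vee}]=-[A,B]^{\vee}$. Then
\[[\mu^c_-[pt],\mu^c_+[C]]=[\mu_+[pt]^{\vee},\mu_-[C]^{\vee}]=-[\mu_+[pt],\mu_-[C]]^{\vee}=[\mu_-[C],\mu_+[pt]]^{\vee}=id,\]
and the other relation is obtained the same way. Pairs that commute in Theorem \ref{Ren1}(i) still commute after dualizing.

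For (ii) and (iii), the key observation is that annihilators exchange kernels and images. Since $\mu^c_+[\cdot]=\mu_-[\cdot]^{\vee}$, the annihilator in $V(C)$ of $\Ima\mu^c_+[pt]+\Ima\mu^c_+[C]$ is $\ker\mu_-[pt]\cap\ker\mu_-[C]=W$. Hence $(W^c)^{\vee}\cong W$, via the pairing of $V^c(C)$ with $V(C)$. Similarly, $\ker\mu^c_-[pt]\cap\ker\mu^c_-[C]$ is the annihilator of $\Ima\mu_+[pt]+\Ima\mu_+[C]$, so it equals $\bigl(V(C)/(\Ima\mu_+[pt]+\Ima\mu_+[C])\bigr)^{\vee}$.

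By Theorem \ref{Ren1}(ii), $V(C)=W\oplus(\Ima\mu_+[pt]+\Ima\mu_+[C])$, because the polynomials of positive degree account exactly for the images. So the composite $W\hookrightarrow V(C)\to V(C)/(\Ima\mu_+[pt]+\Ima\mu_+[C])$ is an isomorphism. The dual of this composite is exactly the natural map $\ker\mu^c_-[pt]\cap\ker\mu^c_-[C]\to W^c$, which is therefore an isomorphism.

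For (iii), the map $H^*(J)\to V^c(C)\to W^c$ induced by $AJ^*$ has dual $W\hookrightarrow V(C)\xrightarrow{AJ_*}H_*(J)$. That dual map is an isomorphism by Theorem \ref{Ren1}(iii), so the map in (iii) is an isomorphism too.

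It remains to show the second map in (ii) is an isomorphism, with target $V^c(C)$ (the $V(C)$ in the statement should be read as $V^c(C)$). I will prove a Stone--von Neumann type lemma for the Weyl algebra acting on $V^c(C)$ by $\mu^c_{\pm}$. The annihilators $\mu^c_-[pt],\mu^c_-[C]$ lower $n$, so they are locally nilpotent on $V^c(C)$. Set $\Omega=\ker\mu^c_-[pt]\cap\ker\mu^c_-[C]$.

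Injectivity follows from the relations in (i). Apply suitable monomials in $\mu^c_-$ to a nonzero relation $\sum_{a,b} (\mu_+^c[pt])^a(\mu_+^c[C])^b w_{ab}$ with $w_{ab}\in\Omega$. This isolates a leading coefficient $w_{ab}$ up to a nonzero factorial constant, a contradiction.

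For surjectivity, use the operators $\pi_1=\sum_{m\ge0}\frac{(-1)^m}{m!}(\mu^c_+[C])^m(\mu^c_-[pt])^m$ and the analogous $\pi_2$. These are finite sums by local nilpotence, and they are the standard projections onto the vacuum. Induction on the $\mu^c_-$-nilpotency order then writes every element in terms of $\Omega$ and the creation operators.

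The main obstacle is bookkeeping rather than substance. One has to check that the natural maps named in the statement really are the duals of the maps in Theorem \ref{Ren1}. This includes the fact that $AJ_*$ does not kill $\Ima\mu_+[pt]$, since $AJ_{n+1}\circ i=AJ_n$. That is why I argue through the perfect pairing of $W$ with $W^c$ rather than through quotients of $AJ$.
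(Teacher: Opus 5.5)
Your argument is correct and follows the route the paper indicates. The paper gives no separate proof: it quotes Rennemo and remarks that one passes from homology to cohomology by dualizing $V(C)$ and the operators. That is what you do for (i), the first map in (ii), and (iii), with the Weyl-algebra (Fock space) lemma giving the free-module statement for $V^c(C)$.

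One point to tidy: $AJ^*\alpha=(AJ_n^*\alpha)_n$ lies in $\prod_n H^*(C^{[n]})$, not in the graded direct sum you fix (for instance $AJ_n^*1\neq 0$ for all $n$). This is harmless. Keep the direct sum, where local nilpotence of the $\mu^c_-$ holds, and note that $W\cong H_*(J)$ is finite dimensional. Then $W^c\cong W^{\vee}$ is concentrated in finitely many $n$, so the induced map $H^*(J)\to W^c$ involves only finitely many components.
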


\begin{rem}\label{AJinv}
We want to emphasis the explicit form of the map $(AJ_{n,*})^{-1}: H_*(J)\ra\ker \mu_{-}[pt]\cap H_{*}(C^{[n]})$ for $n\geq 2g$ in Theorem \ref{Ren2} (iv).  Since $AJ_{n}:C^{[n]}\ra J$ is a $\p^{n-g}$-bundle for $n\geq 2g-1$.  For $n\geq 2g$, we have the relative ample class $\omega_n:=c_1(\mo_{AJ_n}(1))$.  One can see that $\omega_n\cap[C^{[n]}]=[i(C^{[n-1]})]$.

Recall that we have the universal sheaf $F$ over $C\times J$ normalized at the smooth point $x\in C$.  For $n\geq 2g$, $E_n:=p_{2,*}(p_1^*\mo_C(nx)\otimes F)$ is a rank $n-g+1$ bundle over $J$, where $p_1,p_2$ are projections from $C\times J$ to $C$ and $J$ respectively.  Moreover we have an isomorphism $\p(E_n)\xrightarrow{\cong}C^{[n]}$ and the following commutative diagram
\[\xymatrix{\p(E_n)\ar[rr]^{\cong}\ar[rd]_{\pi}&&C^{[n]}\ar[ld]^{AJ_n}\\ &J&}.\]
Denote by $c_i(E_n)$ the $i$-the Chern class of $E_n$.  By the basic property of Chern classes and the fact $\dim J=g$, we have for $n\geq 2g-1$
\[\sum_{i=0}^{n-g+1}\pi^*c_i(E_n)\cup\omega_n^{n-g+1-i}=\sum_{i=0}^{g}\pi^*c_i(E_n)\cup\omega_n^{n-g+1-i}=0.\]

For any $\beta\in H_*(J)$, we have
\begin{equation}\label{preAJ} (AJ_{n,*})^{-1}(\beta)=\sum_{i=0}^{g}\pi^*c_i(E_n)\cup\omega_n^{n-g-i}\cap AJ^!(\beta)=:\gamma.
\end{equation}
We see $\gamma\in \ker\mu_{-}[pt]$, because $\mu_{+}[pt](\mu_{-}[pt](\gamma))=\omega_n\cap\gamma=0$ and $\mu_{+}[pt]$ is injective by Theorem \ref{Ren1} (ii).
 \end{rem}
The following proposition is very important for our main theorem.  The proof of Proposition \ref{chen} is a bit long and hence moved to \S\ref{appA}.
\begin{prop}\label{chen}Let $E_n,n\geq 2g-1$ be the same as in Remark \ref{AJinv}.  We have 
$$ch_i(E_n)=0\text{ for all }i\geq 2.$$
In particular $$c_i(E_n)=\frac{1}{i!}(-\Theta)^i\text{ for all }i\geq 1$$ 
with $\Theta:=-c_1(E_n)$ the theta divisor class defined in \S \ref{LF}.   
\end{prop}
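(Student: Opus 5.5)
We will compute $ch(E_n)$ by Grothendieck--Riemann--Roch for the projection $p_2:C\times J\ra J$, working in a setting where Riemann--Roch is available for singular spaces. The classical smooth case suggests the shape of the answer. For a smooth curve $X$ one has $ch(p_*(\mathcal{L}\otimes F))=(n-g+1)-\Theta$ by GRR, because $ch(F)=e^{c_1(F)}$ and $c_1(F)^2$ pushes forward to $-2\Theta$, the higher powers pushing forward to zero for degree reasons. Since $C$ and $J$ are singular, we instead work on the smooth total spaces $\mc\times_B\mj\ra\mj$ and restrict to the fiber over $0$, or use the bivariant Riemann--Roch of Fulton--MacPherson for the lci morphism $p_2$ (as reviewed in \S\ref{appB}). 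The plan is first to reduce to the relative setting. The bundle $\me_n:=p_{2,*}(p_1^*\mo_{\mc}(ns)\otimes\mf)$ on $\mj$ restricts to $E_n$ on $J=\mj_0$, so it suffices to show $ch_i(\me_n)|_{J}=0$ in $H^{2i}(J)$ for $i\ge2$. Deformation invariance does not directly give vanishing on $J$. However, $\mj\ra B$ is projective and $B$ may be shrunk to a contractible neighborhood of $0$, so $H^*(\mj)\ra H^*(J)$ is an isomorphism by proper base change. The statement therefore reduces to $ch_i(\me_n)=0$ in $H^*(\mj)$.

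Next, apply GRR to the smooth projective morphism-between-smooth-spaces $\pi:\mc\times_B\mj\ra\mj$ (it is flat and lci, with relative tangent class that of $\mc/B$ pulled back):
\[ch(\me_n)=\pi_*\big(ch(\mf)\,e^{n[s]}\,td(T_{\mc/B})\big).\]
Since $\mf$ is torsion free of rank one but not locally free along $\mc^{sing}\times\mj$, we do not have $ch(\mf)=e^{c_1(\mf)}$, and this is the main obstacle. I would attack it via a resolution of singularities $\widetilde{\mj}\ra\mj$, as the remark in the introduction indicates; alternatively one could work directly over the generic point of $B$. Over the open set $B^\circ$ of smooth curves, fiberwise the classical computation gives $ch_i=0$ for $i\ge2$. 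The difficulty is extending this from $\mj|_{B^\circ}$ to all of $\mj$: a class vanishing on a dense open set need not vanish. Thus one needs an argument that the classes $ch_i(\me_n)$ restricted to the special fiber are determined by their restrictions to nearby fibers. I would try the specialization map $H^*(J)\ra H^*(\mj_t)$. It is not injective in general. Still, it is compatible with the perverse/Rennemo decomposition, and combined with Theorem \ref{Ren2}(iii) and the $\p^{n-g}$-bundle structure $C^{[n]}\cong\p(E_n)$, it might control the classes.

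A cleaner route I would try first uses the $\p$-bundle structure. The Hilbert schemes $C^{[n]}$ for varying $n\ge 2g-1$ are related by $i:C^{[n]}\hookrightarrow C^{[n+1]}$. On $C\times J$ there is the exact sequence $0\ra F((n)x)\ra F((n+1)x)\ra F((n+1)x)|_{x\times J}\ra0$, and the quotient is trivial by the normalization $F_x\cong\mo_J$. Hence $E_{n+1}$ is an extension of $\mo_J$ by $E_n$, so $ch(E_n)-(n-g+1)$ is independent of $n$. Then $ch(E_n)=ch(E_{2g-1})+(n-2g+1)$, and one wants a single $n$ at which the Chern character is known. Pulling back to a resolution $\widetilde J$ of $J$, the Grothendieck relation $\sum\pi^*c_i(E_n)\omega_n^{n-g+1-i}=0$ on $C^{[n]}$ must be combined with a direct computation of $\omega_n$-powers. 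Concretely, I would express $ch(E_n)$ through $AJ_{n,*}$ of powers of $\omega_n$ and compare these with the classes $[i^k(C^{[n-k]})]$, computed via Rennemo's operators. The aim is to show that the Segre classes of $E_n$ equal $e^{\Theta}$ truncated appropriately, which is equivalent to $ch_{\ge2}=0$. Once $ch_i=0$ for $i\ge2$, the formula $c(E_n)=e^{-\Theta}$, i.e. $c_i=\frac{1}{i!}(-\Theta)^i$, follows formally from Newton's identities.
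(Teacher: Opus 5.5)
Your proposal is a list of strategies rather than a proof, and each one stops at the hard step, so there is a genuine gap.

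The first missing idea is a way to test whether a class in $H^{2i}(J)$ vanishes on a space where it can actually be computed. None of your suggestions supplies this. Shrinking $B$ to a ball identifies $H^*(J)$ with $H^*(\mj)$, but that is an equivalent problem, not a reduction. Restriction to a nearby smooth fibre is, as you say, not injective, and you give no reason why $ch_i(\me_n)$ should avoid its kernel. A resolution $\widetilde{\mj}\to\mj$ is vacuous, since $\mj$ is already smooth.

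The paper's key step is a weight argument. The class $ch_i(E_n)$ is the restriction of $ch_i(\me_n)$ from the smooth variety $\mj$, whose $H^{2i}$ has weights $\geq 2i$. By strictness, $ch_i(E_n)$ therefore lies in a sub-mixed Hodge structure of $H^{2i}(J)$ that meets $W_{2i-1}H^{2i}(J)$ trivially. For a resolution $f:\widetilde J\to J$ one has $\ker f^*=W_{2i-1}H^{2i}(J)$. Hence it suffices to prove $f^*ch_i(E_n)=0$ on the smooth $\widetilde J$, where homology and cohomology agree. This is not a formality: $f^*$ is not injective on $H^{2i}(J)$ in general (for a genus-$2$ curve with one node, $W_1H^2(J)\neq 0$). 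Your passing remark about ``pulling back to a resolution $\widetilde J$'' needs exactly this justification.

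The second missing piece is the computation on $\widetilde J$ that gets around the non-local-freeness of $F$. You correctly identify this as the obstacle but do not resolve it. The paper uses singular Riemann--Roch ($\tau$-classes) on the lci space $C\times\widetilde J$ together with the normalization sequence $0\to\mo_C\to h_*\mo_{\widetilde C}\to T\to 0$. The $T$-term pushes forward to $\ell[\widetilde J]$. The remaining term is computed on the smooth $\widetilde C\times\widetilde J$ via $\exp(c_1)$ and the K\"unneth decomposition. There the normalization $F_x\cong\mo_J$ kills the $H^0(\widetilde C)\otimes H^2(\widetilde J)$ component of $c_1$, so the push-forward lives only in $H_{2g}(\widetilde J)\oplus H_{2g-2}(\widetilde J)$.

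If you follow this route, note that $(h\times f)^*F$ is not literally a line bundle. Its fibre at a point lying over a non-locally-free point of $F$ has dimension $\geq 2$. One has to pass to the reflexive hull of its torsion-free quotient and check that the correction terms, which are supported in codimension $\geq 2$ over the singular points of $C$, do not spoil the vanishing.

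Your Segre-class route does not replace this computation, for two reasons.
\begin{itemize}
\item The identity $AJ_{n,*}(\omega_n^{n-g+k}\cap[C^{[n]}])=\frac{1}{k!}\Theta^k\cap[J]$ is a Poincar\'e formula for compactified Jacobians. That is essentially the statement itself, and you give no mechanism for proving it via Rennemo's operators.
\item Even if proved, it would only determine $c_k(E_n)\cap[J]$. The map $\cap[J]:H^*(J)\to H_*(J)$ is not injective for singular $J$, and it is the cohomological identity that is used in (\ref{preAJ}) and Proposition~\ref{mu-c}.
\end{itemize}

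Also, $\mc\times_B\mj\to\mj$ is not a smooth morphism between smooth spaces: its fibres are the singular curves and its total space is in general singular. So GRR has to be used in its $\tau$-class form.

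The correct ingredients you do have are the easy parts of the argument:
\begin{itemize}
\item the smooth-curve GRR heuristic;
\item the extension $0\to E_n\to E_{n+1}\to\mo_J\to 0$, showing that $ch(E_n)-(n-g+1)$ is independent of $n$;
\item Newton's identities, giving $c(E_n)=e^{-\Theta}$ once $ch_{\geq 2}(E_n)=0$.
\end{itemize}
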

Denote by $\omega_n$ the relative ample divisor class of $AJ_n:C^{[n]}\ra J$, i.e. $\omega_n=c_1(\mo_{AJ_n}(1))$.  Then $\omega_n\cap [C^{[n]}]=[i(C^{[n-1]})]$ by definition and $AJ_{n,*}(\omega_n^{n-g+1}\cap[C^{[n]}])=-c_1(E_n)\cap [J]=\z\cap[J]$.  \begin{lemma}\label{comw}Use the same notations as in (\ref{3maps}), and we have
\[q^*\omega_{n+1}=p^*\omega_{n}+\sigma^*[x]\in H^*(C^{[n,n+1]}),\]
where $[x]\in H^2(C)$ is the first Chern class of the line bundle $\mo_C(x)$.
\end{lemma}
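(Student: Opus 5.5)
The plan is to identify $\omega_{n+1}$ and $\omega_n$ with the tautological classes of the projective bundles $\p(E_{n+1})$ and $\p(E_n)$, and then compare the tautological line bundles along $C^{[n,n+1]}$.

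The key objects are the tautological sublines. For $n$ large (or at least $n\geq 2g-1$; the general case is handled in the last paragraph), a point $Z\in C^{[n]}$ corresponds to the line in $E_n$ at $AJ_n(Z)$ spanned by the section $s_Z$ of $I_Z\otimes\mo_C(nx)\otimes\mo_C(-nx)\otimes\mo_C(nx)$. More intrinsically, $s_Z$ is the canonical inclusion $I_Z\hookrightarrow \mo_C$ twisted by $\mo_C(nx)$. Thus $\mo_{AJ_n}(-1)$ is the line subbundle $L_n\subset AJ_n^*E_n$. After normalizing at $x$, its fiber at $Z$ is $\mathrm{Hom}(I_Z(nx)|_x\text{-normalized},\ \ldots)$.

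On $C^{[n,n+1]}$ I will set $G=AJ_n\circ p$ and $G'=AJ_{n+1}\circ q$. These give the universal sheaves $\mf_Z=I_{Z_n}(nx)$ and $\mf'=I_{Z_{n+1}}(n+1)x$, each normalized at $x$. There is a natural map $\mf'\otimes\mo(-x)\to\mf$, namely the inclusion $I_{Z_{n+1}}\subset I_{Z_n}$. Normalizing at $x$ introduces a line bundle twist that must be tracked. The canonical section $1\in I_{Z_{n+1}}(n+1)x$ restricts near $x$ compatibly with $1\in I_{Z_n}(nx)\otimes \mo(x)$. The tautological subline $L_{n+1}$ at $(Z_n,Z_{n+1})$ is then identified with $L_n\otimes N$, where $N$ measures the discrepancy between the two normalizations at $x$.

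The computation then reduces to showing that $N$ is the pullback under $\sigma$ of $\mo_C(-x)$, or equivalently $\sigma^*[x]$, up to the sign convention. Concretely, the normalization line for $\mf'$ at $x$ compares with that of $\mf$ through the fiber at $x$ of $I_{Z_n}/I_{Z_{n+1}}\otimes \mo(x)$. Away from the divisor $\sigma^{-1}(x)$ the map is an isomorphism at $x$. Along that divisor it vanishes to first order, because the residual point $Z_{n+1}\setminus Z_n$ equals $x$. Hence $q^*\mo_{AJ_{n+1}}(1)\cong p^*\mo_{AJ_n}(1)\otimes\sigma^*\mo_C(x)$. Taking $c_1$ gives the claim. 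A sanity check is consistency with $\omega_n\cap[C^{[n]}]=[i(C^{[n-1]})]$: the locus where the section defining $i(C^{[n]})$ in $C^{[n+1]}$ pulls back decomposes as the pullback of $i(C^{[n-1]})$ plus $\{\sigma=x\}$. In fact I would prove the lemma this way, as an equality of Cartier divisors, since $\omega_{n}$ is the class of the divisor $i(C^{[n-1]})=\{Z\ni x\}$.

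\textbf{Divisor version (main route).} I would define $\omega_n$ for every $n$ as $c_1$ of the line bundle $\mo(D_n)$, where $D_n=\{Z: x\in Z\}$ is the Cartier divisor cut out by the section $\det(\mo_Z)\ni$ evaluation of a local equation $t$ of $x$. This is the norm of $t$ on the universal $\mo_{\mathcal Z_n}$, and it agrees with $\mo_{AJ_n}(1)$ for $n\geq 2g-1$ by the remark above. The norm is multiplicative in the exact sequence $0\to \mo_{\sigma}\to \mo_{Z_{n+1}}\to\mo_{Z_n}\to 0$ on $C^{[n,n+1]}$. Hence $q^*\mathrm{Nm}(t)=p^*\mathrm{Nm}(t)\cdot\sigma^*t$, which gives $q^*\mo(D_{n+1})\cong p^*\mo(D_n)\otimes\sigma^*\mo_C(x)$. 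Multiplicativity of determinants in short exact sequences of flat families makes this work even where $C^{[n,n+1]}$ is singular.

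\textbf{Main obstacle.} The hard step is matching the definition $\omega_n=c_1(\mo_{AJ_n}(1))$ with $\mo(D_n)$, that is, checking that the sign and normalization conventions agree, since the rest follows formally.
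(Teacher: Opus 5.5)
Your divisor-version argument is correct and is essentially the paper's proof. The paper writes $\mo_{C^{[n+1]}}(i(C^{[n]}))$ as the determinant line bundle $\det(\mo_C(x)^{[n+1]})\otimes\det(\mo_C^{[n+1]})^{-1}$, whose canonical section is your norm of the equation of $x$, and then applies the EGL-type exact sequence on $C^{[n,n+1]}$, where the kernel line bundle cancels exactly as in your multiplicativity step. The paper also takes the identification $\omega_n=c_1(\mo(D_n))$, which you flag as the main obstacle, as given (it writes $\omega_n\cap[C^{[n]}]=[i(C^{[n-1]})]$ "by definition"), so your tautological-subline preamble is not needed.
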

\begin{proof}The image $i(C^{[n]})\subset C^{[n+1]}$ is a Cartier divisor and induces a line bundle which is exactly the determinant line bundle associated to $[\mo_C(x)]-[\mo_C]$, i.e. we have 
\[\mo_{C^{[n+1]}}(i(C^{[n]}))\cong \det((p_2)_*(p_1^*\mo_C(x))|_{\cd_{n+1}})\otimes \det((p_2)_*\mo_{\cd_{n+1}})^{-1},\]
where $\cd_{n+1}\subset C\times C^{[n+1]}$ is the universal subscheme and $p_1,p_2$ are projections from $C\times C^{[n+1]}$ to $C,C^{[n+1]}$ respectively.  Therefore by an analogous formula to \cite[Lemma 2.1]{EGL} we have 
\[q^*\mo_{C^{[n+1]}}(i(C^{[n]}))\cong p^*\mo_{C^{[n]}}(i(C^{[n-1]}))\otimes\sigma^*\mo_{C}(x).\]
Hence the lemma.
\end{proof}
The following lemma gives a concrete description of the theta divisor $\z$.  But we will not use it for our main theorem.

\begin{lemma}\label{nouse}$\Theta\cap[J]=[AJ_{g-1}(C^{[g-1]})]$.
\end{lemma}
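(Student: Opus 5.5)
We must prove $\Theta\cap[J]=[AJ_{g-1}(C^{[g-1]})]$, and the plan is to compute $AJ_{n,*}$ of a power of $\omega_n$ in two ways. Work with $n=2g-1$ (or any $n\geq 2g$). We already know that $AJ_{n,*}(\omega_n^{n-g+1}\cap[C^{[n]}])=\Theta\cap[J]$, which follows from the Chern class relation for the $\p^{n-g}$-bundle $\pi:\p(E_n)\cong C^{[n]}\ra J$ and $c_1(E_n)=-\Theta$. It therefore suffices to identify $\omega_n^{n-g+1}\cap[C^{[n]}]$ geometrically.

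First we iterate the relation $\omega_n\cap[C^{[n]}]=[i(C^{[n-1]})]$. By Lemma \ref{comw}, applied along the inclusion $i$ (restricting $q^*\omega_{n}=p^*\omega_{n-1}+\sigma^*[x]$ to the locus where the added point is $x$), we get $i^*\omega_n=\omega_{n-1}+(\text{contribution of }[x]\text{ at }x)$. Since $x$ is a point on the curve, the pullback of $[x]$ via the constant map to $x$ is zero in $H^2$, so $i^*\omega_n=\omega_{n-1}$. Here we use that $\mo_{C^{[n]}}(i(C^{[n-1]}))$ restricted to $i(C^{[n-1]})$ is $\mo(i(C^{[n-2]}))$ twisted by the restriction of $\mo_C(x)$ to the point $x$, which is trivial. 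Inductively this gives
\[\omega_n^{k}\cap[C^{[n]}]=[i^k(C^{[n-k]})],\]
with $i^k$ meaning that $x$ is added $k$ times. The relation needs to hold for $n-k<2g-1$ as well, where $\omega_{n-k}$ is defined as the class of the divisor $i(C^{[n-k-1]})$; the identity $\omega_m\cap[C^{[m]}]=[i(C^{[m-1]})]$ is then tautological for every $m$.

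Taking $k=n-g+1$ gives $\omega_n^{n-g+1}\cap[C^{[n]}]=[i^{n-g+1}(C^{[g-1]})]$. Next, $AJ_n\circ i^{k}=AJ_{n-k}$, because $I_{Z+kx}\otimes\mo(nx)\cong I_Z\otimes\mo((n-k)x)$ when $x$ is smooth. Hence
\[\Theta\cap[J]=AJ_{g-1,*}[C^{[g-1]}].\]

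Finally, $AJ_{g-1}:C^{[g-1]}\ra J$ is birational onto its image: a generic $G$ in the image has $h^0=1$, so the fiber is a single point. Irreducibility and dimension $g-1$ of $C^{[g-1]}$ for planar integral $C$ are standard, since $C^{[g-1]}$ is an lci of dimension $g-1$ containing $(C^{sm})^{(g-1)}$ as a dense open subset. So $AJ_{g-1,*}[C^{[g-1]}]=[AJ_{g-1}(C^{[g-1]})]$.

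The main obstacle is the birationality statement. Generic fiber degree $1$ holds because the locus $h^0\geq2$ within the image has smaller dimension, by a Brill–Noether-type count on the smooth locus. The restriction $i^*\omega_n=\omega_{n-1}$ also needs care, but it is direct from the divisor description.
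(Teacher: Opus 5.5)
Your proof is correct. It follows the same overall plan as the paper: compute $AJ_{n,*}(\omega_n^{n-g+1}\cap[C^{[n]}])=\Theta\cap[J]$, then identify $\omega_n^{n-g+1}\cap[C^{[n]}]$ with the class of an embedded copy of $C^{[g-1]}$. You carry out that identification differently.

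The paper picks $n-g+1$ distinct smooth points $x=x_1,\dots,x_{n-g+1}$, asserts that each divisor $i_{x_j}(C^{[n-1]})$ represents $\omega_n$, and intersects these distinct Cartier divisors to get $[i'(C^{[g-1]})]$. Since $AJ_n\circ i'=T\circ AJ_{g-1}$ with $T$ a translation, it then needs an algebraic-equivalence argument to remove $T_*$.

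You instead use the single point $x$ repeatedly. This requires the restriction formula $i^*\omega_m=\omega_{m-1}$ for all $m\geq g$, with $\omega_m$ taken as the class of the Cartier divisor $i(C^{[m-1]})$ when $AJ_m$ is not a projective bundle. Your derivation is sound: restrict Lemma~\ref{comw} to $\sigma^{-1}(x)\cong C^{[m-1]}$, where $q=i\circ p$ and $\sigma^*[x]$ becomes a pullback from a point, hence zero in $H^2$. The projection formula then gives $\omega_n^{k}\cap[C^{[n]}]=i^k_*[C^{[n-k]}]$.

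What your route buys:
\begin{itemize}
\item $AJ_n\circ i^{k}=AJ_{n-k}$ holds on the nose, so no translation or algebraic equivalence is needed.
\item You never have to check that $i_{x_j}(C^{[n-1]})$ represents $\omega_n$ for $x_j\neq x$. Since $F$ is normalized only at $x$, that check implicitly uses $c_1(F_{x_j})=0$.
\end{itemize}
The paper's route, in exchange, works entirely on $C^{[n]}$ and never has to define or compare $\omega_m$ for $m<2g-1$.

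You also explicitly justify that $AJ_{g-1}$ is birational onto its image. The generic fibre is a point because a general degree-$(g-1)$ effective divisor supported on $C^{sm}$ has $h^0=1$, and $C^{[g-1]}$ is irreducible, so $AJ_{g-1,*}[C^{[g-1]}]=[AJ_{g-1}(C^{[g-1]})]$. The paper uses this tacitly in the step $AJ_{n,*}[i'(C^{[g-1]})]=T_*[AJ_{g-1}(C^{[g-1]})]$, so your argument is more complete on that point.
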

\begin{proof}Choose $n-g+1$ different smooth points $x=x_1,\cdots,x_{n-g+1}\in C$, and denote by $i_{x_i}$ the regular embedding $C^{[m]}\hookrightarrow C^{[m+1]}$ sending $Z$ to $Z\cup\{x_i\}$.  Easy to see $[i_{x_i}(C^{[n-1]})]=\omega_n\cap[C^{[n]}]$ with $\omega_n$ the relative ample divisor class. Although $C^{[n]}$ is not smooth and we might not define intersection product on $H_*(C^{[n]})$, we can define the intersection product of effective Cartier divisors\footnote{more generally for cycles given by regular embedings.} as explained in \cite[Example 6.5.1]{Ful}.
Hence $\omega_n^{n-g+1}$ is represented by the image of $C^{[g-1]}$ via the regular embedding $i':C^{[g-1]}\hookrightarrow C^{[n]}$ sending $Z$ to $Z\cup\{x_1,\cdots,x_{n-g+1}\}$. 

On the other hand, $AJ_n\circ i'=T\circ AJ_{g-1}$, where $T:J\xrightarrow{\cong} J$ sends every rank 1 sheaf $G$ to $G\otimes \mo_C((n-g+1)x-\sum_{i=1}^{n-g+1}x_i)$.  Therefore
\[-c_1(E_n)\cap[J]=AJ_{n,*}(\omega_n^{n-g+1}\cap[C^{[n]}])=AJ_{n,*}([i'(C^{[g-1]})])=T_*([AJ_{g-1}(C^{[g-1]})]).\]
Finally since $T$ is a translation, $T_*([AJ_{g-1}(C^{[g-1]})])$ and $[AJ_{g-1}(C^{[g-1]})]$ are algebraically equivalent, hence $T_*([AJ_{g-1}(C^{[g-1]})])=[AJ_{g-1}(C^{[g-1]})]$ in $H^*(J)$.
\end{proof} 

The space $V^c(C)$ ($V(C)$, resp.) is equipped with a grading $D$ such that $D_n(V^c(C))=H^*(C^{[n]})$ ($D_n(V(C))=H_*(C^{[n]})$, resp.) which induces gradings on $W^c$ ($W$, resp.) and $H^*(J)$ ($H^*(J)$, resp.).  Notice that the grading $D$ on $V(C)$ induces $D$ on the subspace $W$ because $\mu_{\pm}[pt],\mu_{\pm}[C]$ are homogenous of degree $\pm1$ with respect to the grading $D$.  We call the decomposition $H^*(J)=\bigoplus_{k\geq 0}D_kH^*(J)$ induced by the grading $D$ the \emph{$D$-decomposition}.  

By \cite[Proposition 7.1]{Ren},  we have
\begin{equation}\label{D=P}\bigoplus_{k\leq m}D_kH^*(J)=:D_{\leq m}H^*(J)=P_{\leq m}H^*(J),
\end{equation}
where $P_{\leq m}$ is the perverse filtration defined in \S \ref{PF}.  Hence the $D$-decomposition $\bigoplus_m D_m H^*(J)$ provides a split of the perverse filtration.

Denote by $\mh_n:=\ker\mu_{-}[pt]\cap H_{*}(C^{[n]})$ for $n\geq 2g$.  Then by Theorem \ref{Ren1} (iv), we have an isomorphism $AJ_{n,*}:\mh_n\xrightarrow{\cong} H_*(J)$ and $(AJ_{n,*})^{-1}$ is given explicitly by (\ref{preAJ}).  The grading $D$ on $H_*(J)$ also induces a grading on $\mh_n$.  We will give a more concrete description of the induced grading on $\mh_n$ in the next subsection.  
\subsection{An eigenvalue decomposition on $\mh_n$.}
In this subsection we give a decomposition of $\mh_n$ into eigenvalue spaces of some endomorphism which coincides with the $D$-decomposition on $H_*(J)$ via Abel-Jacobi pushforward.
 
Define $\phi:=\mu_{+}[pt]\circ\mu_{-}[C]\in \End(H_*(C^{[n]}))$.  By Theorem \ref{Ren1} (i), both $\mu_{+}[pt]$ and $\mu_{-}[C]$ commute with $\mu_{-}[pt]$.  Therefore $\phi$ keeps the subspace $\mh_n$ invariant.  We also denote $\phi$ its restriction to $\mh_n$. 
\begin{prop}\label{etade}Let $n\geq 2g$.  For any $\alpha\in \mh_n$, we have 
\[AJ_{n,*}(\alpha)\in D_kH_*(J)\Leftrightarrow \phi(\alpha)=(n-k)\alpha.\]
In particular we have the eigenvalue decomposition 
\[\mh_n=\bigoplus_{i=0}^{2g}H^{\phi}_{n-i}\]
with $H_{n-i}^{\phi}$ the eigenspace of value $n-i$, and $AJ_{n,*}:\bigoplus_{k}D_kH_*(J)\ra \bigoplus_k H_{n-k}^{\phi}$ is an isomorphism of graded spaces. 
\end{prop}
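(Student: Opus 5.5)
We use the structure in Theorem \ref{Ren1}: $V(C)\cong W\otimes\bq[\mu_+[pt],\mu_+[C]]$, with $W$ graded by $D$ and $\mu_{\pm}$ homogeneous of degree $\pm1$. The plan is to compute $\phi$ on $\mh_n$ explicitly in this basis, and then match eigenvalues with the $D$-grading via $AJ_{n,*}$.

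First we describe $\ker\mu_-[pt]\cap D_n V(C)$. A general element of $D_nV(C)$ is a sum of terms $\mu_+[pt]^a\mu_+[C]^b w$ with $w\in W\cap D_k$ and $a+b+k=n$. The relations $[\mu_-[pt],\mu_+[C]]=id$ and $[\mu_-[pt],\mu_+[pt]]=0$, together with $\mu_-[pt]w=0$, show that $\mu_-[pt]$ acts on this tensor factor as $\partial/\partial\mu_+[C]$. The kernel is therefore $\bigoplus_k \mu_+[pt]^{n-k}(W\cap D_k)$. For $n\geq 2g$ this is $\mh_n$. (Only $k\le 2g$ occurs, since $W\cong H_*(J)$ and the $D$-grading on $H_*(J)$ is bounded by the perverse range $[0,2g]$ via (\ref{D=P}).)

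Next we compute $\phi$ on the element $\alpha_w:=\mu_+[pt]^{n-k}w$ with $w\in W\cap D_k$. Since $\mu_-[C]$ commutes with $\mu_+[pt]$ up to the identity, induction gives $\mu_-[C]\mu_+[pt]^{m}=\mu_+[pt]^{m}\mu_-[C]+m\,\mu_+[pt]^{m-1}$. Using $\mu_-[C]w=0$, this yields
\[\phi(\alpha_w)=\mu_+[pt]\big((n-k)\mu_+[pt]^{n-k-1}w\big)=(n-k)\alpha_w.\]
So $\mh_n=\bigoplus_k \mu_+[pt]^{n-k}(W\cap D_k)$ is the eigenspace decomposition of $\phi$, with eigenvalues $n-k$ for $0\le k\le 2g$. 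These eigenvalues are distinct, so any $\alpha$ with $\phi(\alpha)=(n-k)\alpha$ lies in the $k$-th summand.

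It remains to show $AJ_{n,*}(\mu_+[pt]^{n-k}w)$ lies in $D_kH_*(J)$, and that it corresponds to $AJ_*(w)$. We have $AJ_{n}\circ i=T\circ AJ_{n-1}$ with $T$ the identity, because $I_{Z\cup x}\otimes\mo(nx)\cong I_Z\otimes\mo((n-1)x)$ for $x$ smooth. Hence $AJ_{n,*}\mu_+[pt]^{n-k}w=AJ_{k,*}w$. Under the identification $W\cong H_*(J)$ of Theorem \ref{Ren1}(iii), which defines the $D$-grading on $H_*(J)$, this is the image of $w\in D_k$, so it lies in $D_kH_*(J)$. Since $AJ_{n,*}$ is an isomorphism on $\mh_n$, it carries $\bigoplus_k H^\phi_{n-k}$ isomorphically and gradedly onto $\bigoplus_k D_kH_*(J)$. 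This gives both directions of the equivalence.

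The main obstacle is bookkeeping. We must check that the grading on $H_*(J)$ is exactly the one transported from $W$ via $AJ_*$, and that the commutator signs in Theorem \ref{Ren1}(i) give $+m$ rather than $-m$; a sign error would turn the eigenvalue into $-(n-k)$. The relation $[\mu_-[C],\mu_+[pt]]=id$ fixes the sign as above.
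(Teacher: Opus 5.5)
Your proposal is correct and follows essentially the paper's argument: Lemma \ref{mumu}(2) (equivalently your commutator identity) gives $\phi(\mu_+[pt]^{n-k}w)=(n-k)\mu_+[pt]^{n-k}w$ for $w\in W\cap D_k$, and $AJ_{n,*}\circ\mu_+[pt]^{n-k}=AJ_{k,*}$ carries this summand onto $D_kH_*(J)$. The only difference is that you identify $\mh_n=\bigoplus_k\mu_+[pt]^{n-k}(W\cap D_k)$ directly from the tensor decomposition in Theorem \ref{Ren1}(ii), whereas the paper gets this for free from $AJ_{n,*}:\mh_n\to H_*(J)$ being an isomorphism (which also yields your bound $k\le 2g$ by a dimension count).
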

We first prove the following lemma.
\begin{lemma}\label{mumu}Let $\alpha\in V(C)$, and let $l,k\in\bz_{\geq 0}$.  Then we have
\begin{enumerate}
\item If $\alpha\in \ker \mu_{-}[pt]$, then $\mu_{+}[C]^l(\alpha)\in \ker \mu_{-}[pt]^{l+1}$ and $\mu_{-}[pt]\mu_{+}[C]^{l+1}(\alpha)=(l+1)\mu_{+}[C]^{l}(\alpha)$.
\item If $\alpha\in \ker \mu_{-}[C]$, then $\mu_{+}[pt]^l(\alpha)\in \ker \mu_{-}[C]^{l+1}$ and $\mu_{-}[C]\mu_{+}[pt]^{l+1}(\alpha)=(l+1)\mu_{+}[pt]^{l}(\alpha)$.
\end{enumerate}
In particular, recall that $W= \ker \mu_{-}[pt]\cap \ker \mu_{-}[C]$, we have
\[\mu_{+}[C]^l\mu_{+}[pt]^k W\cap \ker\mu_{-}[pt]=0, \text{ if }l\neq0,\]
\[\mu_{+}[C]^l\mu_{+}[pt]^k W\cap \ker\mu_{-}[C]=0, \text{ if }k\neq0.\]
\end{lemma}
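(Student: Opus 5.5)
The plan is to derive everything from the Heisenberg-type relations in Theorem \ref{Ren1} (i): $[\mu_{-}[pt],\mu_{+}[C]]=\mathrm{id}$, $[\mu_{-}[C],\mu_{+}[pt]]=\mathrm{id}$, with all other pairs commuting. Writing $a=\mu_{-}[pt]$ and $b=\mu_{+}[C]$, we have $[a,b]=\mathrm{id}$. A standard induction then gives
\[
[a,b^{m}]=m\,b^{m-1}\quad\text{for } m\geq 1 .
\]
The inductive step is $[a,b^{m+1}]=[a,b^m]b+b^m[a,b]=mb^{m-1}b+b^m=(m+1)b^m$. If $a\alpha=0$, this yields $a b^{l+1}\alpha=(l+1)b^l\alpha$, which is the second half of (1). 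Iterating $l+1$ times, $a^{l+1}b^{l}\alpha=l!\,a\alpha=0$, so $b^l\alpha\in\ker a^{l+1}$. Part (2) is the same argument with $a=\mu_{-}[C]$ and $b=\mu_{+}[pt]$.

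For the ``in particular'' statements, take $w\in W$. Since $\mu_{-}[pt]$ commutes with $\mu_{+}[pt]$, the element $\mu_{+}[pt]^k w$ still lies in $\ker\mu_{-}[pt]$. Suppose $\beta=\mu_{+}[C]^l\mu_{+}[pt]^k w$ with $l\geq1$ and $\mu_{-}[pt]\beta=0$. Applying (1) to $\alpha=\mu_{+}[pt]^k w$ gives
\[
0=\mu_{-}[pt]\beta=l\,\mu_{+}[C]^{l-1}\mu_{+}[pt]^k w .
\]
By the freeness in Theorem \ref{Ren1} (ii), the map $W\otimes\bq[\mu_+[pt],\mu_+[C]]\to V(C)$ is an isomorphism, so multiplication by monomials is injective on $W$. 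Hence $w=0$ and $\beta=0$. The statement should be read for the subspace $\mu_{+}[C]^l\mu_{+}[pt]^kW$ with $l,k$ fixed.

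The subspace claim needs one further step, which I expect to be the only real subtlety. If a sum $\sum_j \mu_{+}[C]^{l}\mu_{+}[pt]^{k}w_j$ with fixed $(l,k)$ lies in $\ker\mu_{-}[pt]$, we need linearity together with injectivity of the combined map from $W$. This works because everything is linear and the monomial $\mu_{+}[C]^{l-1}\mu_{+}[pt]^k$ is injective on $W$ by (ii). The second intersection statement follows symmetrically, applying (2) to $\alpha=\mu_{+}[C]^l w\in\ker\mu_{-}[C]$ (using that $\mu_{-}[C]$ commutes with $\mu_{+}[C]$) together with $k\neq0$.
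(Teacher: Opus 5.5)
Your proof is correct and follows the paper's argument: induction from $[\mu_{-}[pt],\mu_{+}[C]]=id$ (and its analogue for (2)), then applying (1) to $\mu_{+}[pt]^k w$ with $w\in W$ for the ``in particular'' claims. The one difference is that the paper finishes by applying $\mu_{+}[C]$ to $\mu_{+}[C]^{l-1}\mu_{+}[pt]^k w=0$, which gives $\mu_{+}[C]^{l}\mu_{+}[pt]^k w=0$ directly, so your appeal to the freeness in Theorem \ref{Ren1} (ii) is unnecessary. Your extra ``subspace'' step is also unnecessary, since by linearity every element of the image of $W$ already has the form $\mu_{+}[C]^l\mu_{+}[pt]^k w$ for a single $w\in W$.
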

\begin{proof}(1) It is enough to prove $\mu_{-}[pt]\mu_{+}[C]^{l+1}(\alpha)=(l+1)\mu_{+}[C]^{l}(\alpha)$ and $\mu_{+}[C]^l(\alpha)\in \ker \mu_{-}[pt]^{l+1}$ will follow.  We do the induction on $l$.  

Let $l=0$, then $\mu_{-}[pt]\mu_{+}[C](\alpha)=[\mu_{-}[pt],\mu_{+}[C]](\alpha)=\alpha$ since $\alpha\in \ker\mu_{-}[pt]$ and $[\mu_{-}[pt],\mu_{+}[C]]=id$ by Theorem \ref{Ren1} (i).
For general $l$, by induction assumption we have 
\begin{eqnarray}\mu_{-}[pt]\mu_{+}[C]^{l+1}(\alpha)&=&[\mu_{-}[pt],\mu_{+}[C]](\mu_{+}[C]^{l}\alpha)+\mu_{+}[C]\mu_{-}[pt]\mu_{+}[C]^l(\alpha)\nonumber\\
&=&\mu_{+}[C]^l(\alpha)+\mu_{+}[C]l\mu_{+}[C]^{l-1}(\alpha)\nonumber\\
&=&(l+1)\mu_{+}[C]^l(\alpha).\nonumber
\end{eqnarray}

(2) is analogous to (1) and left to the readers.

Finally for any $\alpha \in W$, since $\mu_{+}[pt]^k\alpha\in\ker\mu_{-}[pt]$ we have 
\begin{eqnarray}\mu_{-}[pt](\mu_{+}[C]^l\mu_{+}[pt]^k\alpha)&=&\mu_{-}[pt]\mu_{+}[C]^l(\mu_{+}[pt]^k\alpha)\nonumber\\
&=&l\mu_{+}[C]^{l-1}\mu_{+}[pt]^k\alpha.
\end{eqnarray}
If $\mu_{+}[C]^l\mu_{+}[pt]^k\alpha\in\ker\mu_{-}[pt]$ and $l\neq 0$, then $\mu_{+}[C]^{l-1}\mu_{+}[pt]^k\alpha=0$ and hence $\mu_{+}[C]^{l}\mu_{+}[pt]^k\alpha=0$.   The second statement is analogous.  We have proved the lemma.
\end{proof}
\begin{proof}[Proof of Proposition \ref{etade}]Since we already have the $D$-decomposition on $H_*(J)$ and $AJ_{n,*}$ is an isomorphism, we only need to prove $\Rightarrow$ . 

By definition $\beta\in D_kH_*(J)\Leftrightarrow \exists~ w\in W\cap H_*(C^{[k]})$ such that $AJ_{k,*}(w)=\beta$.  By the commutativity of $\mu_{+}[pt]$ and $\mu_{-}[pt]$,  we have $\mu_{+}[pt]^{n-k}(w)\in\mh_n$.  Since $AJ_{k,*}=AJ_{n,*}\circ\mu_{+}[pt]$, we also have 
$AJ_{n,*}(\mu_{+}[pt]^{n-k}(w))=\beta$.  
Therefore, we only need to show $\phi(\mu_{+}[pt]^{n-k}(w))=(n-k)\mu_{+}[pt]^{n-k}(w)$.  By Lemma \ref{mumu} (2) we have $\mu_{-}[C]\mu_{+}[pt]^{n-k}(w)=(n-k)\mu_{+}[pt]^{n-k-1}(w)$ and hence the statement since $\phi=\mu_{+}[pt]\circ\mu_{-}[C]$.
\end{proof}

\subsection{Pull-backs of the Poincar\'e sheaves to the flag Hilbert scheme.}
We have already shown that the $D$-decompositon on $H_*(J)$ corresponds to the $\phi$-eigenvalue decomposition on $\mh_n$.  In order to obtain a better understanding of the linear operator $\phi$, we will use the Fourier transform associated to the Poincar\'e sheaf $P$ over $J\times J$.  We would like to investigate how the operator $\mu_{-}[C]$ acts on the Fourier transform.  

Let $P_n:=(AJ_n\times id_J)^*P$ the pullback of $P$ to $C^{[n]}\times J$.
Recall that we have three natural maps $p,\sigma,q$ as in the following diagram, sending $(Z_n,Z_{n+1})$ to $Z_n,Z_{n+1}\setminus Z_{n},Z_{n+1}$ respectively.  
\[\xymatrix{&C^{[n,n+1]}\ar[ld]_{p}\ar[d]_{\sigma}\ar[rd]^{q}&\\C^{[n]}&C&C^{[n+1]}}\]
\begin{prop}\label{comp}\begin{enumerate}
\item We have $(p\times id_J)^*P_n=L(p\times id_J)^*P_n$, $(\sigma\times id_J)^*F=L(\sigma\times id_J)^*F$ and $(q\times id_J)^* P_{n+1}=L(q\times id_J)^* P_{n+1}$.  Moreover those three sheaves are all maximal Cohen-Macaulay sheaves on $C^{[n,n+1]}\times J$, which are locally free in codimension 1.

~~

\item $(p\times id_J)^*P_n\otimes^L (\sigma\times id_J)^*F=(p\times id_J)^*P_n\otimes(\sigma\times id_J)^*F$, i.e. $\Tor^i((p\times id_J)^*P_n,(\sigma\times id_J)^*F)=0$ for $i>0$.

~~

\item $(q\times id_J)^*P_{n+1}\cong (p\times id_J)^*P_n\otimes(\sigma\times id_J)^*F.$
\end{enumerate}
\end{prop}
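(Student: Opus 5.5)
Everything will be done in the relative setting over $B$ and then restricted to $0\in B$. Take the smooth total spaces $\mc^{[n,n+1]}$, $\mj$, $\mc$, the relative sheaves $\cp$ and $\mf$, and the relative maps $\tilde p,\tilde\sigma,\tilde q$. Set $\cp_n:=(\widetilde{AJ}_n\times_B id_{\mj})^*\cp$.

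\textbf{Step 1: Cohen--Macaulay properties.} Arinkin's $\cp$ is maximal Cohen--Macaulay on the smooth variety $\mj\times_B\mj$, so it is flat over each factor $\mj$; this is Arinkin's flatness result. Hence $\cp_n$, $\mf$ and their pullbacks to $\mc^{[n,n+1]}\times_B\mj$ are flat over $\mc^{[n,n+1]}$ (resp. $\mc$). For a sheaf flat over the first factor, pulling back along a base change of the first factor has no higher $\Tor$. So the underived and derived pullbacks agree, both relatively and after restricting to $0$.

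Flatness over $C^{[n,n+1]}$ also gives the Cohen--Macaulay property. The fibres over points of $C^{[n,n+1]}$ are rank one torsion free sheaves on $J$, and Arinkin shows these are maximal Cohen--Macaulay. The base $C^{[n,n+1]}$ is lci, hence Cohen--Macaulay. A sheaf flat over a Cohen--Macaulay base whose fibres are Cohen--Macaulay of full dimension is Cohen--Macaulay of full dimension. This gives (1), except for local freeness in codimension one.

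\textbf{Step 2: local freeness in codimension one.} Off the locus where either $\sigma(Z)$ or the sheaf in $J$ is singular, everything is a line bundle. The bad locus where the pullback fails to be locally free has codimension at least two. To see this, bound its dimension by the dimensions of the non-locally-free strata of $J$ and of the subscheme of $C^{[n,n+1]}$ where the added point is singular; the strata of $J$ have codimension at least two because of the planar singularities (Rennemo/Shende dimension estimates).

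\textbf{Step 3: Tor vanishing, statement (2).} Consider $\mc^{[n,n+1]}\times_B\mj$ and the sheaves $A=(\tilde p\times id)^*\cp_n$ and $B'=(\tilde\sigma\times id)^*\mf$. Tor vanishing is local. Near a point where $\sigma$ lands in the smooth locus, $B'$ is a line bundle, so there is nothing to prove. Otherwise I would argue as follows. $A$ is Cohen--Macaulay on the smooth total space and $B'$ is Cohen--Macaulay, so by the depth/Auslander--Buchsbaum criterion it suffices that $A\otimes B'$ has the expected depth and that the non-locally-free supports of $A$ and $B'$ intersect properly. This is where I expect the main obstacle: controlling the intersection of the two degeneracy loci via dimension counts on the flag Hilbert scheme. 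Having done this in the relative setting, restricting to the fibre over $0$ preserves the vanishing, because all the sheaves involved are flat over $B$.

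\textbf{Step 4: the isomorphism, statement (3).} Construct a natural map between $(q\times id)^*P_{n+1}$ and $(p\times id)^*P_n\otimes(\sigma\times id)^*F$ from the exact sequence $0\to I_{Z_{n+1}}\to I_{Z_n}\to \mo_{\sigma}\to0$ together with Arinkin's description of $P$ via determinants (as in \cite[Lemma 2.1]{EGL}). Over the open set where all sheaves are line bundles the map is the standard isomorphism of determinant line bundles.

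Both sides are maximal Cohen--Macaulay, the right-hand side by (2) together with the depth computation, and both are locally free in codimension one. A map between reflexive (S2) sheaves that is an isomorphism outside codimension two is an isomorphism. This proves (3).
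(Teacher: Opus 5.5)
\textbf{The gap is Step 3, which carries the whole proposition.} You give no argument that $\Tor^i=0$. You only propose a criterion, and you flag its key input as the expected obstacle.

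That criterion does not apply here. It is Auslander-type rigidity: expected depth of $A\otimes B'$ plus proper intersection of the degeneracy loci forces Tor-independence. That is a statement over regular rings, and $\mathcal{C}^{[n,n+1]}\times_B\mathcal{J}$ is not smooth. Neither is $\mathcal{J}\times_B\mathcal{J}$ in your Step 1. A maximal Cohen--Macaulay sheaf on a smooth scheme is locally free by Auslander--Buchsbaum, whereas $\mathcal{P}$ is not locally free at pairs of non-locally-free sheaves. These fibre products are only lci, with singularities such as $xy=uv$ over a node, and that is exactly where rigidity of Tor fails.

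There are further problems downstream:
\begin{itemize}
\item Descending from $B$ to the fibre needs $A\otimes B'$ itself, not merely $A$ and $B'$, to be flat over $B$.
\item Step 4 inherits the gap through ``the right-hand side is MCM by (2)''.
\item Step 2 needs repair. The pullbacks of $P_n$ and $P_{n+1}$ are only guaranteed locally free where all of $Z_{n+1}$ lies in $C^{sm}$ or the sheaf on $J$ is a line bundle. Also, $J\setminus J^{sm}$ is a divisor when $C$ is nodal. The codimension-two bound comes, as in the paper, from the product $(C^{[n,n+1]}\setminus (C^{sm})^{[n,n+1]})\times(J\setminus J^{sm})$ of two subsets of codimension at least one.
\end{itemize}

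\textbf{Comparison with the paper.} Apart from Step 3, your outline matches the paper: flatness plus Arinkin's Lemma 2.1 for (1), the open set $C^{[n,n+1]}\times J^{sm}\cup(C^{sm})^{[n,n+1]}\times J$, and the determinant comparison there for (3). For (2) the paper simply invokes Lemma \ref{torf}. That lemma says two maximal Cohen--Macaulay sheaves, locally free in codimension one on a Cohen--Macaulay scheme, have torsion-free tensor product and no higher Tor. Do not try to close your gap with it, because it is false in that generality. On $R=\bc[x,y,u,v]/(xu-yv)$ the ideals $\mathfrak a=(x,y)$ and $\mathfrak b=(u,v)$ are MCM and free off the origin. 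Yet $\Tor_2^R(\mathfrak a,\mathfrak b)\cong\bc$, and $x\otimes u-y\otimes v$ is a nonzero element of $\mathfrak a\otimes\mathfrak b$ killed by the maximal ideal.

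\textbf{The obstacle also appears to be real for the proposition itself.} Take $C$ a nodal cubic and $n=1$.
\begin{itemize}
\item Then $J\cong C$ and $F$ is a twist of $\mathcal{I}_\Delta$, so the two sheaves in (2) are the ideals of the graphs of $p$ and $\sigma$.
\item For $i\ge1$, their $\Tor_i$ is $L_{i+2}(p,\sigma)^*\mathcal{O}_\Delta$.
\item Near a flag over (node, node), $C^{[1,2]}=\mathbb{P}(\mathcal{I}_\Delta)$ is locally $\bc[t_1,t_2,a]/(t_1t_2a)$.
\item Pulling back the Koszul bimodule resolution of $\mathcal{O}_\Delta$ (ranks $1,2,2,\dots$), I find $L_2$ and $L_3$ both nonzero, supported on the exceptional $\mathbb{P}^1$.
\end{itemize}
So $\Tor_1\neq0$, and the tensor product in (3) has torsion, while $(q\times id_J)^*P_2$ is MCM.

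What the argument actually establishes is (1), together with (3) in the form $(q\times id_J)^*P_{n+1}\cong\jmath_*\jmath^*\big((p\times id_J)^*P_n\otimes(\sigma\times id_J)^*F\big)$, where $\jmath$ is the inclusion of that open set.
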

\begin{proof}Firstly, (2) follows directly from (1) and Lemma \ref{torf}.  

Since $P$ and $F$ are flat over both factors, 
we have $P_n=L(AJ_n\times id_J)^*P$, and also $(p\times id_J)^*P_n=L(p\times id_J)^*P_n$, $(\sigma\times id_J)^*F=L(\sigma\times id_J)^*F$, $(q\times id_J)^* P_{n+1}=L(q\times id_J)^* P_{n+1}$.

We prove that those four sheaves mentioned above are maximal Cohen-Macaulay.  We only deal with $P_n$ since other three are analogous.  $P_n$ is a sheaf over $C^{[n]}\times J$ which is flat over both factors.  For every $y\in C^{[n]}$, $P_n|_{{y}\times J}$ is a maximal Cohen-Macaulay sheaf on $\{y\}\times J$, therefore by \cite[Lemma 2.1]{Ari2}, $P_n$ is maximal Cohen-Macaulay.  


Denote by $J^{sm}\subset J$ the open dense subset of $J$ parametrizing line bundles on $C$.  We also have that $(C^{sm})^{[n,n+1]}$ is open dense in $C^{[n,n+1]}$ (see e.g. \cite[Lemma 6.3]{Ren}).  It is easy to see those three sheaves are locally free on $C^{[n,n+1]}\times J^{sm}\cup (C^{sm})^{[n,n+1]}\times J$ whose complement in $C^{[n,n+1]}\times J$ is of codimension $\geq 2$.  (1) is proved.

To prove (3), we only need to check the isomorphism on $C^{[n,n+1]}\times J^{sm}\cup (C^{sm})^{[n,n+1]}\times J$.  Now we restrict ourselves to the loci where $P_n,P$ and $F$ are all locally free.  By \cite[Proposition 4.5]{Ari2} or \cite[Remark 4.9]{MRV}, $P_n$ coincides with the determinant line bundle associated to the class $[F]-[\mo_{C\times J}]$, i.e. we have
\[P_n\cong \det((p_2\times id_J)_*((p_1\times id_J)^*F)|_{\cd_n\times J})\otimes \det((p_2\times id_J)_*\mo_{\cd_n\times J})^{-1},\]
where $\cd_n\subset C\times C^{[n]}$ is the universal subscheme and $p_1,p_2$ are projections from $C\times C^{[n]}$ to $C,C^{[n]}$ respectively.   Therefore by an analogous formula to \cite[Lemma 2.1]{EGL} we have 
\[(q\times id_J)^*P_{n+1}\cong (p\times id_J)^*P_n\otimes(\sigma\times id_J)^*F.\]

The proposition is proved.
\end{proof}
\begin{lemma}\label{torf}Let $X$ be a Cohen-Macaulay scheme of pure dimension.  Let $G_1,G_2$ be two maximal Cohen-Macaulay sheaves on $X$.  Assume both $G_1,G_2$ are locally free in codimension 1.  Then
\begin{enumerate}
\item $G_1\otimes G_2$ is torison-free on $X$.
\item $\Tor^i(G_1,G_2)=0$ for all $i>0$.
\end{enumerate}
 \end{lemma}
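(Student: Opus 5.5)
The question is local, so I will fix a point $x\in X$ and work over the local ring $A=\mo_{X,x}$. This is a Cohen--Macaulay local ring of dimension $d$. The modules $M_1=(G_1)_x$ and $M_2=(G_2)_x$ are maximal Cohen--Macaulay, so $\operatorname{depth}M_i=d$ whenever $M_i\neq 0$. By hypothesis they are free at every prime of height $\leq 1$.

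I will prove (2) first, and then deduce (1). The natural tool is Auslander's depth formula and rigidity for Tor. However, that theory is clean mainly over complete intersections or for modules of finite projective dimension. In our application $X$ is always $C^{[n,n+1]}\times J$, which is a local complete intersection because it is the fiber of a smooth family over the smooth base $B$ (see \S\ref{NC}). So the honest route is to assume, or reduce to, the case where $A$ is a complete intersection. I will write the argument for that case and remark that this is the case needed.

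\textbf{Step 1: reduce to a Cohen--Macaulay module over a regular ring.} Write $A=R/(f_1,\dots,f_c)$ with $R$ regular local and $f_1,\dots,f_c$ a regular sequence. I use the standard change-of-rings spectral sequence, or Huneke--Wiegand/Murthy rigidity for complete intersections. Rather than relying on this directly, I prefer a more robust route through dimension counts: $\Tor^A_i(M_1,M_2)$ vanishes on the locus where $M_1$ is free, so it is supported in codimension $\geq 2$.

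\textbf{Step 2: run the depth-lemma induction.} Take a free resolution $\cdots\to F_1\to F_0\to M_1\to 0$ over $A$. Since $M_1$ is maximal Cohen--Macaulay, all syzygies $\Omega^jM_1$ are maximal Cohen--Macaulay, and they are free in codimension $1$.

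Apply $-\otimes M_2$ and argue by induction on $\dim A$. Localizing at non-maximal primes, the inductive hypothesis gives that every $\Tor_i$ with $i>0$ has finite length. For the complete intersection case I then invoke the dependency-of-Tor theorem of Huneke--Wiegand: if $M_1\otimes M_2$ is maximal Cohen--Macaulay, or more weakly if the higher Tor have finite length and both modules are maximal Cohen--Macaulay, then Tor vanishes. Concretely, I use the Auslander-type depth count. Consider the complex $F_\bullet\otimes M_2$, which consists of maximal Cohen--Macaulay modules (sums of copies of $M_2$), has finite length homology in positive degrees, and has length at most $c$ after using the eventual periodicity of resolutions over a complete intersection, or a CI-dimension argument. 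The New Intersection Theorem (acyclicity lemma of Peskine--Szpiro) then forces $\Tor_i=0$ for $i>0$.

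\textbf{Step 3: deduce torsion-freeness.} With Tor vanishing, $M_1\otimes M_2$ has depth $d$ by the depth formula $\operatorname{depth}M_1+\operatorname{depth}M_2=\operatorname{depth}A+\operatorname{depth}(M_1\otimes M_2)$. Hence $M_1\otimes M_2$ is maximal Cohen--Macaulay, so it has no embedded or lower-dimensional associated primes. In particular it is torsion-free, which gives (1).

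The main obstacle is Step 2: Tor rigidity fails over general Cohen--Macaulay rings, so the argument genuinely relies on the complete-intersection (lci) structure. The first thing I would try is the Huneke--Wiegand depth formula for complete intersections combined with the codimension-$1$ freeness and the acyclicity lemma.
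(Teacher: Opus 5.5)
There is a genuine gap, and it is in Step~2. You rely on the principle that, over a complete intersection, two maximal Cohen--Macaulay modules whose higher $\Tor$'s have finite length must be $\Tor$-independent. That is not a theorem; it is false, as shown below. The Huneke--Wiegand rigidity results take as a \emph{hypothesis} that $M_1\otimes M_2$ is maximal Cohen--Macaulay (or at least reflexive). That is exactly what you want to deduce in Step~3, so invoking them is circular. The Peskine--Szpiro acyclicity lemma applies only to a bounded complex $0\to C_s\to\cdots\to C_0\to 0$ with $\operatorname{depth}C_i\geq i$. Your complex $F_\bullet\otimes M_2$ is unbounded, because a non-free maximal Cohen--Macaulay module has infinite projective dimension (Auslander--Buchsbaum), and eventual periodicity does not shorten it. If you truncate at stage $s$, the leftmost term becomes $\Omega^sM_1\otimes M_2$, whose depth is again the unknown quantity.

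No argument can fill this gap, because the statement is false already for hypersurfaces; restricting to the l.c.i.\ case does not help. Let $X=\operatorname{Spec}A$ with $A=\mathbb{C}[x,y,z]/(xz-y^2)$, and take $G_1=G_2=I=(x,y)$. Since $A/I\cong\mathbb{C}[z]$, the depth lemma applied to $0\to I\to A\to A/I\to 0$ shows that $I$ is maximal Cohen--Macaulay. Also $I$ is locally free away from the vertex, which has codimension $2$ and is the only singular point. The module $A/I$ has the periodic resolution $A\xleftarrow{(x\;y)}A^2\xleftarrow{\phi}A^2\xleftarrow{\phi}\cdots$ with $\phi=\begin{pmatrix}y&z\\-x&-y\end{pmatrix}$, where $\phi^2=(y^2-xz)\cdot\mathrm{id}=0$ in $A$. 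Reducing modulo $I$ gives $\bar\phi=\begin{pmatrix}0&z\\0&0\end{pmatrix}$, so $\Tor_i^A(A/I,A/I)\cong\mathbb{C}$ for all $i\geq 2$. Dimension shifting then gives $\Tor_1^A(I,I)\cong\Tor_3^A(A/I,A/I)\cong\mathbb{C}\neq 0$. Moreover $\ker(I\otimes_A I\to I)\cong\Tor_2^A(A/I,A/I)\cong\mathbb{C}$ is a nonzero torsion submodule of $I\otimes_A I$, so (1) and (2) both fail.

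The paper argues in the opposite order from you. It proves (1) first by asserting $G_1\otimes G_2=\jmath_*(G_1\otimes G_2|_U)$. It then derives (2) from the syzygy sequence $0\to G_3\to E\to G_2\to 0$: $\Tor^1(G_1,G_2)$ is supported in codimension $\geq 2$ and embeds in the torsion-free sheaf $G_1\otimes G_3$, so it vanishes. That deduction of (2) from (1) is sound, but the extension claim in (1) fails on the same example. The lemma therefore needs extra hypotheses, or input specific to the sheaves $(p\times id_J)^*P_n$ and $(\sigma\times id_J)^*F$ of Proposition~\ref{comp}. No general depth or rigidity argument can prove it as stated.
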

 \begin{proof}(1) Let $\jmath:U\hookrightarrow X$ be an open embedding with $X\setminus U$ of codimension 2, and we can assume both $G_1$ and $G_2$ are locally free over $U$.  
 
Since $G_1,G_2$ are maximal Cohen-Macaulay, $G_k=\jmath_*(G_k|_U)$ for $k=1,2$ (see e.g. \cite[Lemma 2.2]{Ari2}).  Because $X$ is also Cohen-Macaulay, every section in $G_1\otimes G_2(V)$ can be uniquely determined by a section in $G_1\otimes G_2(V\cap U)$, and every section of $G_1\otimes G_2(V\cap U)$ can be uniquely extended to a section in $G_1\otimes G_2(V)$.  Therefore $G_1\otimes G_2=\jmath_*(G_1\otimes G_2 |_{U})$.  Since $G_1,G_2$ are locally free on $U$, $G_1\otimes G_2$ are locally free hence torsion free on $U$.  Thus $G_1\otimes G_2=\jmath_*(G_1\otimes G_2 |_{U})$ is torsion free.

(2) Let $0\ra G_3\ra E\ra G_2\ra 0$ be an exact sequence with $E$ locally free.  Then $G_3$ is also maximal Cohen-Macaulay and locally free in codimension 1.  We have the following exact sequences
\begin{equation}\label{tor1}0\ra \Tor^1(G_1,G_2)\ra G_1\otimes G_3\ra G_1\otimes E\ra G_1\otimes G_2\ra0;\end{equation}
\begin{equation}\label{tor2}0\ra \Tor^{i+1}(G_1,G_2)\ra \Tor^{i}(G_1,G_3)\ra 0,~\text{for all }i\geq1.\end{equation}
By (1) we see $G_1\otimes G_3$ is torsion free and hence $\Tor^1(G_1,G_2)=0$ by (\ref{tor1}).  Therefore $\Tor^1(G_1,G')=0$ for any $G'$ maximal Cohen-Macaulay and locally free in codimension 1.  Hence $\Tor^2(G_1,G_2)=0$ by (\ref{tor2}) and we can obtain that $\Tor^i(G_1,G_2)=0$ for all $i>0$ by repeating the argument inductively.
\end{proof}

\begin{rem}\label{compgl}Let $X,G_1$ be as in Lemma \ref{torf}.  Assume $X$ is Gorenstein, then $G_1^{\vee}:=\mh om(G_1,\mo_X)$ is also a maximal Cohen-Macaulay sheaf locally free in codimension 1 by \cite[Lemma 7.7]{Ari2}.  Moreover, $G_1^{\vee}\otimes G_1=G_1^{\vee}\otimes^L G_1\cong \mo_X$.

Therefore after shrinking $B$ if necessary, Proposition \ref{comp} (3) can be extended to a relative version, i.e. we have on $\mc^{[n,n+1]}\times_B\mj$
\begin{equation}\label{tengl}(\widetilde{q}\times_B id_{\mj})^*\cp_{n+1}\cong (\widetilde{p}\times_B id_{\mj})^*\cp_n\otimes(\widetilde{\sigma}\times_B id_{\mj})^*\mf,
\end{equation}
where $\cp_n:=(\widetilde{AJ}_n\times_B id_{\mj})^*\cp$, and $\widetilde{p},\widetilde{q},\widetilde{\sigma}$ are as in the following diagram
\[\xymatrix{&\mc^{[n,n+1]}\ar[ld]_{\widetilde{p}}\ar[d]_{\widetilde{\sigma}}\ar[rd]^{\widetilde{q}}&\\ \mc^{[n]}&\mc&\mc^{[n+1]}}.\]
\end{rem}

\section{Proof of the main result.}
\subsection{The Fourier transform.}\label{DFT} 
In this subsection, we construct a  Fourier transform on $H_*(J)$.  

Firstly we recall that for $C$ smooth, $J$ is also smooth with $P$ a line bundle over $J\times J$.  We have the classical Fourier transform $\kf:H_*(J)\ra H_*(J)$
\begin{equation}\label{cFT}\kf(\alpha):=q_{1,*}(ch(P)\cap q_2^*(\alpha)),~\forall \alpha\in H_*(J),
\end{equation}
where $q_1,q_2:J\times J\ra J$ are two projections.  

One can also define the Fourier transform on the cohomology groups $H^*(J)$ as the dual of $\kf$ or equivalently by replacing $\cap$ by $\cup$ in (\ref{cFT}).  Fourier transform is an isomorphism on $H_*(J)$ for $C$ smooth. 

Now let $C$ be integral with plannar singularities.  Since $J$ might be singular, $P$ over $J\times J$ is only a maximal Cohen-Macaulay sheaf.  We only have the tau class $\tau(P)\in H_*(J)$ instead of the Chern polynomial.  Also the intersection product might not be well defined on $H_*(J)$.  The solution to this problem is to use \emph{bivariant theory}.  We give a brief review on this theory in \S\ref{appBT} and one can see \cite[\S I.3]{FM} and \cite[Ch 17]{Ful} for more details.  There is also a brief review in \cite[\S 4]{Ren}.  

For any morphism $f:X\ra Y$, there is a bivariant homology group denoted by $H^i(X\xrightarrow{f}Y)$.  By Remark \ref{relhom}, we identify $H^{-i}(X\ra pt)$ with the Borel-Moore homology group $H^{BM}_i(X)$ and for any $c\in H^k(X\xrightarrow{f}Y)$, we have a morphism $c:H^{BM}_i(Y)\ra H^{BM}_{i-k}(X)$ given by the product
\[H^k(X\xrightarrow{f}Y)\otimes H^{-i}(Y\ra pt)\xrightarrow{\cdot} H^{k-i}(X\ra pt).\]
For any element $a\in H^{BM}_i(Y)$, we may write $c(a)$ as an element in $H^{BM}_{i-k}(X)$ or $c\cdot a$ as an element in $H^{k-i}(X\ra pt)$.

Let $d=\dim_{\bc}\mj$.  
Since $\mj$ is smooth, by Remark \ref{relhom} there is an isomorphism $H^i(X\ra \mj)\ra H^{i-2d}(X\ra pt)$ by composed with the fundamental class $[\mj]$ in $H^{-2d}(\mj\ra pt)$. 
 



We have the relative Poincar\'e sheaf $\cp$ over $\mj\times_B\mj$.  Since $\mj\times_B\mj$ is a locally complete intersection, we have well-defined $ch^{\tau}(\cp)\in H^*(\mj\times_B\mj\ra pt)$ (see (\ref{defch})).  We use $ch^{\tau}(\cp_j)$ to denote its preimage in $H^*(\mj\times_B\mj\xrightarrow{\widetilde{q}_j}\mj)$ for $j=1,2$.  We use the same letter to denote the element in $H^i(J\xrightarrow{\imath_J} \mj)$ and its image in $H^{i-2d}(J\ra pt)$.  Therefore we have $a\cdot [\mj]=a$ for every $a\in H^i(J\ra \mj)$ and $ch^{\tau}(\cp_j)\cdot[\mj]=ch^{\tau}(\cp)$ for $j=1,2$.

We have the following Cartesian diagram
\begin{equation}\label{fourdia1}\xymatrix@C=1.5cm{J\times J\ar@{^{(}->}[r]^{\imath_{J\times J}}\ar @<0.5ex> [d]^{q_1}  \ar @<-0.5ex> [d]_{q_2}&\mj\times_B\mj\ar @<0.5ex> [d]^{\widetilde{q}_1}  \ar @<-0.5ex> [d]_{\widetilde{q}_2}\\
J\ar@{^{(}->}[r]_{\imath_{J}}& \mj}.\end{equation}

For any $a\in H^*(J\xrightarrow{\imath_J}\mj)$, the pullback $\widetilde{q}_2^*a\in H^*(J\times J\xrightarrow{\imath_{J\times J}}\mj\times_B\mj)$.
We define the \emph{Fourier transform} $\kf$ as a linear map on $H^*(J\ra pt)$
\begin{eqnarray}\label{defft}\kf:H^*(J\ra pt)&\ra& H^*(J\ra pt),\nonumber\\
a&\mapsto& q_{1,*}(\widetilde{q}_2^*(a)(ch^{\tau}(\cp)))=q_{1,*}(\widetilde{q}_2^*(a)\cdot ch^{\tau}(\cp_1))\cdot[\mj]\end{eqnarray} 

\begin{rem}\label{cp12}We also have
\[\kf(a)=q_{1,*}(\widetilde{q}_2^*(a)(ch^{\tau}(\cp)))=q_{1,*}(\widetilde{q}_2^*(a)\cdot ch^{\tau}(\cp_2)\cdot[\mj]).\]
But we can not define $q_{1,*}(\widetilde{q}_2^*(a)\cdot ch^{\tau}(\cp_2))$ because $\widetilde{q}_2^*(a)\cdot ch^{\tau}(\cp_2)\in H^*(J\times J\xrightarrow{q_2\circ\imath_{J}}\mj)$ and the map $q_2\circ\imath_{J}$ does not factor through $q_1$.
\end{rem}
The following proposition ensures that $\kf$ is well-defined.
\begin{prop}\label{indep}For any $a\in H^*(J\ra pt)$, the class $\widetilde{q}_2^*(a)(ch^{\tau}(\cp))\in H_{*}(J\times J)$ is independent of the choice of the embedding $J\hookrightarrow \mj$.
\end{prop}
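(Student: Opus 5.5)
We need to show that $\widetilde{q}_2^*(a)(ch^{\tau}(\cp))\in H_*(J\times J)$ does not depend on the family $\mc\ra B$, i.e. on the ambient smooth $\mj$. The plan is to compare any two choices through a third family dominating both, and to use that bivariant classes pulled back from a smooth ambient space act on homology by refined Gysin maps, which depend only on the lci structure.

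\textbf{Step 1: reduce to a comparison of two families.} Given two families $\mc\ra B$ and $\mc'\ra B'$ as in \S\ref{NC}, form the family over $B\times B'$, or more directly use a versal deformation of the planar singularities. After shrinking, both families are pulled back from a common smooth family $\mc''\ra B''$ along maps $B\ra B''$ and $B'\ra B''$. It therefore suffices to treat a pullback situation $h:B\ra B''$ with induced map $\widetilde h:\mj\ra\mj''$, where $\mj$ and $\mj''$ are both smooth. Since $\cp$ and $\mf$ are compatible with base change (Arinkin's Poincar\'e sheaf is constructed functorially in families), we get $\cp=(\widetilde h\times\widetilde h)^*\cp''$, and this is a derived pullback by flatness of $\cp''$ over each factor.

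\textbf{Step 2: identify the pieces.} By Remark \ref{relhom}, the element $a\in H^*(J\ra\mj)$ corresponding to a homology class is $a=\alpha\cdot[\mj]^{-1}$. Its pullback $\widetilde q_2^*a$ then acts on $ch^{\tau}(\cp)$ as the refined Gysin-type operation associated to the regular embedding $J\times J\hookrightarrow J\times\mj\cong J\times_B\mj$; here $J\times_B\mj=J\times J$ because $J$ lies over $0$. Concretely, I would rewrite $\widetilde q_2^*(a)(ch^\tau(\cp))$ as $(q_2^*\alpha)\cdot \imath^!ch^\tau(\cp)$, using the bivariant Riemann--Roch compatibility $\imath^!\,\tau(\cp)=Td(N)^{-1}\tau(L\imath^*\cp)$ from \S\ref{appB}. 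The normal bundle $N$ of $J\times J$ in $\mj\times_B\mj$ is the pullback of $T_0B$, which is trivial because $TB$ is trivial, so $Td(N)=1$. Hence the class equals an expression built only from $\tau(P)$ on $J\times J$, the Todd class of $J$, and the cross product with $\alpha$. None of these involves the family.

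\textbf{Step 3: verify via the pullback.} Alternatively, and more robustly, apply the bivariant pullback along $\widetilde h$: refined Gysin maps commute with the proper and flat operations, $ch^\tau$ commutes with lci pullback up to the Todd class of the virtual normal bundle, and both normal bundles are pulled back from trivial tangent bundles of $B$ and $B''$. This shows the class computed with $\mj''$ equals the class computed with $\mj$.

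The main obstacle is the Riemann--Roch bookkeeping: defining $ch^\tau$ (see (\ref{defch})) through $\tau$ and the Todd class of the smooth ambient space, and checking that the correction terms $Td(T_{\mj\times_B\mj})$ restricted to $J\times J$ cancel against the relative Todd classes in $\imath^!$. I would handle this by first proving an excess-free compatibility $\imath^!\circ ch^\tau_{\mj\times_B\mj}(\cp)=ch^\tau_{J\times J}(P)$ as classes acting on $H_*$. Triviality of $TB$ makes the relevant virtual normal bundles trivial, which is presumably exactly why that assumption was imposed.
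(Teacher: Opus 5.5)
Your Step 1 and the intent of Step 3 follow the paper's route: dominate the family by a versal one $\mj_{ver}\ra B_{ver}$, and use that $f_{\mj\times_B\mj}:\mj\times_B\mj\ra\mj_{ver}\times_{B_{ver}}\mj_{ver}$ is l.c.i.\ with $f_{\mj\times_B\mj}^*ch^{\tau}(\cp_{ver})=ch^{\tau}(\cp)$ by Theorem \ref{Ful18.3} (4). However, the argument you actually spell out has a gap.

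Step 2 cannot be carried out. There is no product $(q_2^*\alpha)\cdot\imath^!ch^{\tau}(\cp)$ of two homology classes on the singular space $J\times J$. Moreover, $P$ is only maximal Cohen--Macaulay, so $\tau(P)$ is not a cohomology class capped with a Todd class, and nothing can be capped against it. If $\widetilde{q}_2^*(a)(ch^{\tau}(\cp))$ could be written intrinsically on $J\times J$, the proposition would be empty. The smooth ambient $\mj$ is exactly what lets $a$, viewed in $H^*(J\xrightarrow{\imath_J}\mj)$, act on $ch^{\tau}(\cp)$. For the same reason, the compatibility $\imath^!ch^{\tau}(\cp)=ch^{\tau}(P)$ you plan to prove is true but beside the point, since it does not involve $a$. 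The ``Todd bookkeeping'' you flag as the main obstacle is also not one: $ch^{\tau}$ commutes with l.c.i.\ pullback with no correction term. The triviality of $TB$ plays no role here; it enters in Proposition \ref{inverse}.

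The real content, which Step 3 leaves unstated, is how the class attached to a fixed $\alpha\in H_*(J)$ over $\mj$ is matched with the class attached to $\alpha$ over $\mj_{ver}$. This is not a pullback along $\widetilde h=f_{\mj}$. A bivariant pullback along $f_{\mj}$ goes from classes over $\mj_{ver}$ to classes over $\mj$, and it replaces $J$ by $J\times_{B_{ver}}B$.

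The paper instead uses the orientation class. Set $a_{ver}:=a\cdot[f_{\mj}]\in H^*(J\ra\mj_{ver})$. Since $[f_{\mj}]\cdot[\mj_{ver}]=[\mj]$, one gets $a_{ver}\cdot[\mj_{ver}]=a\cdot[\mj]$, so $a_{ver}$ represents the same homology class. Then ($A_5$) and the base change of orientations $\widetilde{q}_{2,ver}^*[f_{\mj}]=[f_{\mj\times_B\mj}]$ (Fulton, Prop.~17.4.1) give $\widetilde{q}_{2,ver}^*(a_{ver})=\widetilde{q}_2^*(a)\cdot[f_{\mj\times_B\mj}]$. Associativity together with $[f_{\mj\times_B\mj}]\cdot ch^{\tau}(\cp_{ver})=ch^{\tau}(\cp)$ then yields equality of the two classes in $H_*(J\times J)$. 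Without these two identities, your Step 3 is not yet a proof.
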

\begin{proof}Our argument is analogous to \cite[\S 4.5]{Ren}.  Let $\mc_{ver}\ra B_{ver}$ be a versal deformation family of $C$ such that $\mc_{ver},~B_{ver}$ and the relative compactified Jacobian $\mj_{ver}$ are all smooth.  Denote by $\cp_{ver}$ the relative Poincar\'e sheaf over $\mj_{ver}\times_{B_{ver}}\mj_{ver}$.  
After shrinking $B$, we may have the following Cartesian diagram for $i=1,2$
\[\xymatrix@C=1.5cm{J\times J\ar@{^{(}->}[r]^{\imath_{J\times J}}\ar[d]^{q_i}  &\mj\times_{B}\mj\ar[d]^{\widetilde{q}_i}\ar[r]^{f_{\mj\times_B\mj}}&\mj_{ver}\times_{B_{ver}}\mj_{ver}\ar[d]^{\widetilde{q}_{i,ver}}\\
J\ar[d]\ar@{^{(}->}[r]_{\imath_{J}}& \mj\ar[d]\ar[r]_{f_{\mj}}&\mj_{ver}\ar[d]^{\pi_{ver}}\\ 0\ar@{^{(}->}[r]_{\imath_0}& B\ar[r]_{f_B}&B_{ver}},\]
where one sees easily that $\mj\times_B\mj\cong \mj_{ver}\times_{B_{ver}}\mj$ because $\mj\cong\mj_{ver}\times_{B_{ver}}B.$
 
Let $\dim_{\bc}\mj=d$, $\dim_{\bc}\mj_{ver}=d_{ver}$.  Then $d-d_{ver}=\dim_{\bc} B-\dim_{\bc} B_{ver}$.  Since $B$ and $B_{ver}$ are all smooth, $f_B$ is a l.c.i. morphism, hence so are $f_{\mj}$ and $f_{\mj\times_B\mj}$ because $\pi_{ver}$ and $\widetilde{q}_{i,ver}$ are flat.  Denote by $[f_{\mj}]$ ($[f_{\mj\times_B\mj}]$, resp.) the canonical orientation class (def. see Remark \ref{orien}) in $H^{2d_{ver}-2d}(\mj\xrightarrow{f_{\mj}}\mj_{ver})$ ($H^{2d_{ver}-2d}(\mj_{ver}\times_B\mj\xrightarrow{f_{\mj\times_B\mj}}\mj_{ver}\times_{B_{ver}}\mj_{ver})$, resp.).  Since $f_{\mj}$ and $f_{\mj\times_B\mj}$ are of the same codimension $d_{ver}-d$, by \cite[Proposition 17.4.1]{Ful} we have 
\begin{equation}\label{pbor}\widetilde{q}_{2,ver}^*[f_{\mj}]=[f_{\mj\times_B\mj}].\end{equation}
 
Since $\cp_{ver}$ is flat over both factors,  $\cp=(f_{\mj\times_B\mj})^*(\cp_{ver})=L(f'_{\mj})^*(\cp_{ver}))$. 
Therefore $(f_{\mj\times_B\mj})^*(ch^{\tau}(\cp_{ver}))=ch^{\tau}(\cp)$ by Theorem \ref{Ful18.3} (4), which implies that \begin{equation}\label{fjch}[f_{\mj\times_B\mj}]\cdot ch^{\tau}(\cp_{ver,i})\cdot[\mj_{ver}]=ch^{\tau}(\cp_{i})\cdot [\mj].\end{equation} 


For any $a\in H^*(J\xrightarrow{\imath_J} \mj)\cong H_*(J)$, define $a_{ver}=a\cdot [f_{\mj}]\in H^*(J\xrightarrow{f_{\mj}\circ\imath_J} \mj_{ver})\cong H_*(J)$.  Then $a_{ver}$ does not depend on the choice of $\imath:J\ra \mj$.  Combine (\ref{pbor}) and (\ref{fjch}) and we have
\begin{eqnarray}\label{acjver}\widetilde{q}_{2,ver}^*(a_{ver})\cdot ch^{\tau}(\cp_{ver,2})\cdot[\mj_{ver}]&=&(\widetilde{q}_2)^*(a)\cdot \widetilde{q}_{2,ver}^*([f_{\mj}])\cdot ch^{\tau}(\cp_{ver,1})\cdot [\mj_{ver}]\nonumber\\
&=&(\widetilde{q}_2)^*(a)\cdot ch^{\tau}(\cp_1)\cdot [\mj].
\end{eqnarray}
The proposition is proved.
\end{proof}

We have the following Cartesian diagram
 \begin{equation}\label{fourdia2}\xymatrix@C=1.5cm{J\times J\times J\ar@{^{(}->}[r]^{\imath_{J\times J\times J}}\ar @<1.0ex> [d]^{q_{12}} \ar@<0.2ex>[d]_{q_{23}} \ar @<-3.0ex> [d]_{q_{23}}&\mj\times_B\mj\times_B\mj\ar @<1.0ex> [d]^{\widetilde{q}_{12}} \ar@<0.2ex>[d]_{\widetilde{q}_{13}} \ar @<-3.0ex> [d]_{\widetilde{q}_{23}} \\J\times J\ar@{^{(}->}[r]^{\imath_{J\times J}}\ar @<0.5ex> [d]^{q_1}  \ar @<-0.5ex> [d]_{q_2}&\mj\times_B\mj\ar @<0.5ex> [d]^{\widetilde{q}_1}  \ar @<-0.5ex> [d]_{\widetilde{q}_2}\\
J\ar@{^{(}->}[r]_{\imath_{J}}& \mj}.\end{equation}
Define
\[\cp^{\vee}:=\mh om(\cp,\mo_{\mj\times_B\mj})[g]\in D^b(\mj\times_B\mj),\]
where $[-]$ is the shift functor.  By \cite[Lemma 6.2 (1)]{Ari2}, $\cp^{\vee}$ is also flat over both factors of $\mj\times_B\mj$.  We have $\widetilde{q}^*_{23}(\cp)\otimes^L \widetilde{q}^*_{12}(\cp^{\vee})=\widetilde{q}^*_{23}(\cp)\otimes \widetilde{q}^*_{12}(\cp^{\vee})$ since $\cp,\cp^{\vee}$ are flat over both factors of $\mj\times_B\mj$.
By \cite[Proposition 7.1]{Ari2}, $R\widetilde{q}_{13,*}(\widetilde{q}^*_{23}(\cp)\otimes \widetilde{q}^*_{12}(\cp^{\vee}))$ is a line bundle $\cl$ over $\mj\cong\Delta_{\mj}\subset \mj\times_B\mj$, where $\Delta_{\mj}$ is the diagonal in $\mj\times_B\mj$.
Let $\omega_{\mj}$ be the canonical bundle of $\mj$, which is also the relative canonical bundle $\omega_{\mj/B}$ of $\mj\ra B$ as $B$ has trivial tangent bundle.  We actually have $\cl\cong\omega_{\mj/B}$, but we will not use this fact.  We define 
\[\cp^{-1}:=\mh om(\cp,\mo_{\mj\times_B\mj})\otimes \widetilde{q}_1^*\cl^{\vee}[g]\in D^b(\mj\times_B\mj).\]
Then \begin{equation}\label{sedual}R\widetilde{q}_{13,*}(\widetilde{q}^*_{23}(\cp)\otimes^L \widetilde{q}^*_{12}(\cp^{-1}))=R\widetilde{q}_{13,*}(\widetilde{q}^*_{23}(\cp)\otimes \widetilde{q}^*_{12}(\cp^{-1}))\cong\mo_{\Delta_{\mj}}.\end{equation}

Define another linear map $\kf_1$ on $H_*(J)$
\begin{equation}\label{defft1}\kf_1:H_*(J)\ra H_*(J),~a\mapsto q_{1,*}(\widetilde{q}_2^*(a)(\tau(\cp^{-1}))).\end{equation} 
One can show analogously that $\kf_1$ is independent of the choice of $\imath_{J}:J\hookrightarrow \mj$.
\begin{prop}\label{inverse}$\kf_1$ is the inverse of $\kf$, i.e. $\kf_1\circ\kf=\kf\circ\kf_1=id_{H_*(J)}.$
\end{prop}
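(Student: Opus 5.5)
The plan is to imitate the usual proof that Fourier--Mukai kernels compose: convolution of kernels corresponds to composition of the transforms, and Riemann--Roch is applied to the composite kernel. All steps will be carried out in bivariant homology over the smooth ambient spaces $\mj\times_B\mj$ and $\mj\times_B\mj\times_B\mj$, and only at the end restricted to $J$ via diagram (\ref{fourdia2}). I treat $\kf_1\circ\kf=id$ in detail; the other composition $\kf\circ\kf_1$ is symmetric, using the analogous identity $R\widetilde{q}_{13,*}(\widetilde{q}_{23}^*\cp^{-1}\otimes\widetilde{q}_{12}^*\cp)\cong\mo_{\Delta_\mj}$. This identity follows from Arinkin's autoduality in the same way as (\ref{sedual}).

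\textbf{Step 1 (composition formula).} For $a\in H^*(J\xrightarrow{\imath_J}\mj)$, I write $\kf_1(\kf(a))$ as a pushforward along $q_{13}$ from $J\times J\times J$. Specifically, I aim to show
\[\kf_1\kf(a)=q_{1,*}\Big(\widetilde{q}_2^*(a)\cdot\widetilde{q}_{13,*}\big(\widetilde{q}_{12}^*ch^{\tau}(\cp^{-1})\cdot\widetilde{q}_{23}^*ch^{\tau}(\cp)\big)\Big),\]
with the indices matched to the labelling of diagram (\ref{fourdia2}). The ingredients are the bivariant axioms: pullback commutes with proper pushforward in Cartesian squares, the projection formula, and associativity of the product. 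These are applied to the two Cartesian squares of (\ref{fourdia2}). The fundamental class $[\mj]$ is inserted exactly as in (\ref{defft}) and Remark \ref{cp12}, so that every product is formed in a group where it makes sense. A further ingredient is the fact that $\widetilde{q}_{12}^*$ and $\widetilde{q}_{23}^*$ of $ch^{\tau}$ classes lie in groups relative to the flat maps $\widetilde{q}_{12},\widetilde{q}_{23}$. This is bookkeeping, but it must respect Remark \ref{cp12}: I must always use the version $ch^{\tau}(\cp_j)$ relative to the factor being pushed forward.

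\textbf{Step 2 (Riemann--Roch).} The sheaves are Tor-independent: $\widetilde{q}_{12}^*\cp^{-1}\otimes\widetilde{q}_{23}^*\cp$ equals its derived tensor product, by flatness over both factors. Hence $ch^{\tau}$ is multiplicative here, by Theorem \ref{Ful18.3} applied on the smooth, or at least l.c.i., space $\mj\times_B\mj\times_B\mj$. The bivariant Grothendieck--Riemann--Roch for the proper map $\widetilde{q}_{13}$, which is l.c.i.\ with trivial relative Todd correction along the fibres up to the $\omega_{\mj/B}$ twist, together with (\ref{sedual}), gives
\[\widetilde{q}_{13,*}\big(\widetilde{q}_{12}^*ch^{\tau}(\cp^{-1})\,\widetilde{q}_{23}^*ch^{\tau}(\cp)\big)=ch^{\tau}(\mo_{\Delta_\mj})=\Delta_*\big(td(N)^{-1}\big)\cap\ldots\]
Here $N$ is the normal bundle of the diagonal, which is the relative tangent bundle. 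The tensor twist by $\cl^{\vee}$ in $\cp^{-1}$ is exactly what cancels the Todd factors: I expect to use $\cl\cong\omega_{\mj/B}$ only through the identity $td(T_{\mj/B})$ versus $td(T_{\mj/B}^\vee)$, whose ratio is $ch(\omega)$. I may need the fact $\cl\cong\omega_{\mj/B}$ after all. Alternatively I can avoid it by working with $\tau$-classes, which already incorporate Todd classes, so that $\tau(\mo_{\Delta})=\Delta_*[\mj]$ directly.

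\textbf{Step 3 (conclusion).} Pulling back $\Delta_*[\mj]$ to $J\times J$ yields $\Delta_{J,*}$ of the refined class. Then $q_{1,*}(\widetilde{q}_2^*(a)\cdot\Delta_*1)=a$, so $\kf_1\circ\kf=id$. Since $H_*(J)$ is finite dimensional, this alone already gives that $\kf$ is invertible, and hence $\kf\circ\kf_1=id$ as well.

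The main obstacle is Step 2: making the $ch^{\tau}$ versus $\tau$ conventions match so that no stray Todd class survives. I will first try to phrase $\kf_1$ and $\kf$ uniformly with $\tau$-classes relative to $\widetilde{q}_1$, and to invoke bivariant Riemann--Roch in the form ``$\tau$ commutes with proper pushforward.'' This would reduce the claim to (\ref{sedual}) plus $\tau(\mo_{\Delta_\mj})=\Delta_*\tau(\mo_\mj)$, relative to $\mj$.
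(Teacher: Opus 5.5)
There is a genuine gap, and it sits in Step 2, which is the heart of the proof. Your Steps 1 and 3 are essentially the paper's bivariant bookkeeping and Lemma \ref{ada}. Your remark that $\kf_1\circ\kf=id$ already forces $\kf\circ\kf_1=id$, because $H_*(J)$ is finite dimensional, is correct and cleaner than ``analogous''.

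\textbf{The product formula in Step 2 is not available as you state it.} You cannot get ``multiplicativity of $ch^{\tau}$'' from Theorem \ref{Ful18.3} on $\mj\times_B\mj\times_B\mj$.
\begin{itemize}
\item That space is only l.c.i., not smooth; it is singular e.g.\ at $(y,y,y)$ for $y$ a singular point of $J$. So two Borel--Moore classes on it cannot be multiplied.
\item The module property (2) of Theorem \ref{Ful18.3} needs one factor in $K^0$, but $\cp$ and $\cp^{-1}$ are only maximal Cohen--Macaulay.
\item Tor-independence by itself gives no product formula.
\end{itemize}
The missing idea is to pass to the smooth ambient space $\mj\times\mj$ (product over a point):
\begin{itemize}
\item Take a finite locally free resolution $E_{\bullet}$ of $(\imath_B)_*\cp$ there.
\item Use that $T_B$ is trivial to write $ch^{\tau}(\cp_1)=ch^{\mj\times\mj}_{\mj\times_B\mj}(E_{\bullet})\cdot[\bar q_1]$.
\item Pull this localized Chern character back along the flat map $\bar q_{23}$; then $\bar q_{23}^*E_{\bullet}$ still resolves $\widetilde q_{23}^*\cp$.
\item Apply the second half of Theorem \ref{Ful18.3} (3), using $\mathcal{H}^i(\bar q_{23}^*E_{\bullet}\otimes\bar q_{12}^*\cp^{-1})=0$ for $i>0$, which holds by flatness of $\cp^{-1}$ over the factors.
\end{itemize}
This identifies your bivariant product with $td(-T_{\mj_3})\cap\tau(\widetilde q_{23}^*\cp\otimes\widetilde q_{12}^*\cp^{-1})$. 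Covariance of $\tau$ along $\widetilde q_{13}$ and (\ref{sedual}) can then be applied.

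\textbf{Your Todd bookkeeping is also pointed the wrong way.} In the paper, $\kf_1$ is defined with $\tau(\cp^{-1})$, not with $ch^{\tau}(\cp^{-1})$ as in your Step 1 formula. This asymmetry ($ch^{\tau}$ for the kernel of $\kf$, $\tau$ for that of $\kf_1$) is exactly what makes the Todd classes cancel:
\begin{itemize}
\item The only leftover is $td(-T_{\mj_3})$, on the factor carrying $a$.
\item That factor survives $\widetilde q_{13,*}$, so the class passes through the push-forward by the projection formula.
\item It then cancels against $\tau(\mo_{\Delta_{\mj}})=\Delta_*(td(T_{\mj})\cap[\mj])$. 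Note this is not $\Delta_*[\mj]$, as you suggest at the end.
\end{itemize}
With $ch^{\tau}$ on both kernels, or $\tau$ on both as you propose, a Todd class of the middle factor remains. That is the factor being integrated out, so the class stays inside $\widetilde q_{13,*}$, and covariance of $\tau$ no longer computes the push-forward.

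The twist by $\cl^{\vee}$ plays no role in this cancellation. It is already absorbed into the sheaf-level identity (\ref{sedual}), and $\cl\cong\omega_{\mj/B}$ is not needed.
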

\begin{proof}We only show $\kf_1\circ\kf=id_{H_*(J)}$ and the rest is analogous and left to the readers.  The general idea of the proof is similar to the case when $C$ is smooth, but we have to handle everything in bivariant theory.  We use the same notations as in (\ref{fourdia2}).  Notice that all maps in (\ref{fourdia2}) are l.c.i..

We have the following commutative diagram for $i=1,2$
\begin{equation}\label{inpf1}\xymatrix{\mj\times_B\mj\ar@{^{(}->}[r]^{\imath_{B}}\ar[d]_{\widetilde{q}_i}&\mj\times\mj\ar[ld]^{\bar{q}_i}\\ \mj&}
\end{equation}
Let $E_{\bullet}$ be a locally free resolution of $(\imath_B)_*\cp$.  By Theorem \ref{Ful18.3} (3) and the fact that $B$ has trivial tangent bundle, there exists $ch_{\mj\times_B\mj}^{\mj\times\mj}(E_{\bullet})\in H^*(\mj\times_B\mj\xrightarrow{\imath_B}\mj\times\mj)$ such that
\begin{equation}\label{chejj}ch^{\tau}(\cp)=ch_{\mj\times_B\mj}^{\mj\times\mj}(E_{\bullet})\cdot[\mj\times\mj].
\end{equation}
Hence 
\begin{equation}\label{chejji}ch^{\tau}(\cp_i)=ch_{\mj\times_B\mj}^{\mj\times\mj}(E_{\bullet})\cdot[\bar{q}_i],
\end{equation}
where $[\bar{q}_i]\in H^*(\mj\times\mj\xrightarrow{\bar{q}_i}\mj)$ is the canonical orientation class.

We have 
\begin{eqnarray}\label{chell}\kf_1\circ\kf(a)&=&q_{1,*}\big\{\widetilde{q}_2^*(q_{1,*}(\widetilde{q}_2^*(a)\cdot ch^{\tau}(\cp_1)))(\tau(\cp^{-1}))\big\}\nonumber\\
&=&q_{1,*}\big\{q_{12,*}(\widetilde{q}_{2}^*(\widetilde{q}_2^*(a)\cdot ch^{\tau}(\cp_1)))(\tau(\cp^{-1}))\big\}\nonumber\\
&=&q_{1,*}\big\{q_{12,*}((\widetilde{q}_{2}^*(\widetilde{q}_2^*(a)\cdot ch^{\tau}(\cp_1)))\cdot\tau(\cp_2^{-1})\cdot [\mj])\big\}\nonumber\\
&=&q_{1,*}q_{12,*}\big\{(\widetilde{q}_{2}^*(\widetilde{q}_2^*(a)\cdot ch^{\tau}(\cp_1))\cdot\tau(\cp_1^{-1}) \cdot [\mj]\big\}\nonumber\\
&=&q_{1,*} q_{12,*}\{(\widetilde{q}_2\circ\widetilde{q}_{23})^*(a)\cdot \widetilde{q}_{2}^*(ch^{\tau}(\cp_1))\cdot\tau(\cp_1^{-1}) \cdot [\mj]\big\}
\end{eqnarray}
where the second equality is because of the commutativity of push-forward and pull-back (\S\ref{appBT} ($A_6$)), the third equality is because of the associativity of products (\S\ref{appBT} ($A_1$)) and $\tau(\cp_i^{-1})\in H^*(\mj\times_B\mj\xrightarrow{\widetilde{q}_i}\mj)$ satisfying $\tau(\cp_i^{-1})\cdot[\mj]=\tau(\cp^{-1})$, the forth equality is because of the commutativity of product and push-forward (\S\ref{appBT} ($A_4$)),  and finally the fifth equality is because of the functoriality of pull-backs (\S\ref{appBT} ($A_3$)) and the commutativity of product and pull-back (\S\ref{appBT} ($A_5$)).  Notice that $\widetilde{q}_{2}^*(ch^{\tau}(\cp_1))\in H^*(\mj\times_B\mj\times_B\mj\xrightarrow{\widetilde{q}_{12}}\mj\times_B\mj)$ because we have the following Cartesian diagram
\begin{equation}\label{inpf2}\xymatrix{\mj\times_B\mj\times_B\mj\ar[r]^{\quad\widetilde{q}_{12}}\ar[d]_{\widetilde{q}_{23}}&\mj\times_B\mj\ar[d]^{\widetilde{q}_2}\\ \mj\times_B\mj\ar[r]_{\widetilde{q}_1}&\mj}.
\end{equation}

We write another Cartesian diagram
\begin{equation}\label{inpf3}\xymatrix{\mj\times_B\mj\times_B\mj\ar[d]_{\widetilde{q}_{23}}\ar@{^{(}->}[r]^{\bar{\imath}_{B}}&\mj\times_B\mj\times \mj\ar[r]^{\quad\bar{q}_{12}}\ar[d]_{\bar{q}_{23}}&\mj\times_B\mj\ar[d]^{\widetilde{q}_2}\\ \mj\times_B\mj\ar@{^{(}->}[r]^{\imath_{B}}\ar[d]_{\widetilde{q}_2}&\mj\times\mj\ar[ld]^{\bar{q}_1}\ar[r]_{\bar{q}_1}&\mj\\ \mj& &},
\end{equation}
By (\ref{chejji}) and \S\ref{appBT} ($A_5$) we have 
\begin{eqnarray}\label{chejjp}\widetilde{q}_2^*(ch^{\tau}(\cp_1))=\widetilde{q}_2^*(ch_{\mj\times_B\mj}^{\mj\times\mj}(E_{\bullet})\cdot[\bar{q}_1])&=&\bar{q}_{23}^*(ch_{\mj\times_B\mj}^{\mj\times\mj}(E_{\bullet}))\cdot \widetilde{q}_{2}^*[\bar{q}_1]\nonumber\\
&=&\bar{q}_{23}^*(ch_{\mj\times_B\mj}^{\mj\times\mj}(E_{\bullet}))\cdot [\bar{q}_{12}].
\end{eqnarray}

Since $\widetilde{q}_{23}$ and $\bar{q}_{23}$ are flat,  
$\bar{q}_{23}^*E_{\bullet}$ gives a locally free resolution of $\bar{q}_{23}^*\imath_{B,*}(\cp)\cong \bar{\imath}_{B,*}\widetilde{q}_{23}^*(\cp)$, hence by \cite[Theorem 18.1]{Ful} we have
\begin{equation}\label{chejje}\bar{q}_{23}^*(ch_{\mj\times_B\mj}^{\mj\times\mj}(E_{\bullet}))=ch_{\mj\times_B\mj\times_B\mj}^{\mj\times_B\mj\times\mj}(\bar{q}_{23}^*E_{\bullet}).\end{equation}

Since $\bar{q}_{12}$ is flat and l.c.i., by Theorem \ref{Ful18.3} (4) and Remark \ref{orien} we have
\begin{equation}\label{chejjpp}[\bar{q}_{12}]\cdot \tau(\cp^{-1}_1)\cdot [\mj]=[\bar{q}_{12}](\tau(\cp^{-1}))=\bar{q}_{12}^*(\tau(\cp^{-1}))=td(-T_{\mj_3})\cap \tau(\bar{q}_{12}^*\cp^{-1})
\end{equation} 
where we denote $T_{\mj_3}$ the pull-back of the tangent bundle of the 3rd factor $\mj_3$ to $\mj\times_B\mj\times\mj$.

Combine (\ref{chejjp}), (\ref{chejje}) and (\ref{chejjpp}) and we have
\begin{eqnarray}\label{chepp}\widetilde{q}_2^*(ch^{\tau}(\cp_1))\cdot\tau(\cp^{-1}_1)\cdot [\mj]&=&ch_{\mj\times_B\mj\times_B\mj}^{\mj\times_B\mj\times\mj}(\bar{q}_{23}^*E_{\bullet})(td(-T_{\mj_3})\cap \tau(\bar{q}_{12}^*\cp^{-1}))\nonumber\\
&=&td(-T_{\mj_3})\cap ch_{\mj\times_B\mj\times_B\mj}^{\mj\times_B\mj\times\mj}(\bar{q}_{23}^*E_{\bullet})(\tau(\bar{q}_{12}^*\cp^{-1})),
\end{eqnarray}
where the second equality is because of \cite[Proposition 17.3.2]{Ful} and we also denote $T_{\mj_3}$ the pull-back of the tangent bundle of $\mj_3$ to $\mj\times_B\mj\times_B\mj$.

Since $\cp^{-1}$ is flat over both factors, $\mh^i(\bar{q}_{23}^*E_{\bullet}\otimes\bar{q}_{12}^*\cp^{-1})=0$ for $i>0$ and $\mh^0(\bar{q}_{23}^*E_{\bullet}\otimes\bar{q}_{12}^*\cp^{-1})\cong \widetilde{q}_{23}^*(\cp)\otimes\widetilde{q}_{12}^*(\cp^{-1})$.  By Theorem \ref{Ful18.3} (3) we have
\begin{equation}\label{chett}\tau(\widetilde{q}_{23}^*(\cp)\otimes\widetilde{q}_{12}^*(\cp^{-1}))=ch_{\mj\times_B\mj\times_B\mj}^{\mj_1\times_B\mj_3\times\mj_3}(\bar{q}_{23}^*E_{\bullet})(\tau(\bar{q}_{12}^*\cp^{-1}))\end{equation}

Therefore by (\ref{chell}), (\ref{chepp}) and (\ref{chett}) we have
\begin{eqnarray}\kf_1\circ\kf(a)&=&q_{1,*}q_{12,*}\big\{(\widetilde{q}_2\circ\widetilde{q}_{23})^*(a)\cdot \widetilde{q}_2^*(ch^{\tau}(\cp_1)) \cdot\tau(\cp_1^{-1})\cdot[\mj]  \big\}\nonumber\\
&=&q_{1,*}q_{12,*}\big\{(\widetilde{q}_2\circ\widetilde{q}_{23})^*(a)\cdot( td(-T_{\mj_3})\cap\tau(\widetilde{q}_{23}^*(\cp)\otimes\widetilde{q}_{13}^*(\cp^{-1})))\big\}\nonumber\\
&=&(q_{1}\circ q_{12})_*\big\{(\widetilde{q}_2\circ\widetilde{q}_{23})^*(a)\cdot( td(-T_{\mj_3})\cap\tau(\widetilde{q}_{23}^*(\cp)\otimes\widetilde{q}_{12}^*(\cp^{-1})))\big\}\nonumber\\
&=&(q_{1}\circ q_{13})_*\big\{(\widetilde{q}_2\circ\widetilde{q}_{13})^*(a)\cdot( td(-T_{\mj_3})\cap\tau(\widetilde{q}_{23}^*(\cp)\otimes\widetilde{q}_{12}^*(\cp^{-1})))\big\}\nonumber\\
&=&q_{1,*}q_{13,*}\big\{\widetilde{q}_{13}^*\widetilde{q}_2^*(a)\cdot( td(-T_{\mj_3})\cap\tau(\widetilde{q}_{23}^*(\cp)\otimes\widetilde{q}_{12}^*(\cp^{-1})))\big\}\nonumber\\
&=&q_{1,*}\big\{\widetilde{q}_2^*(a)\cdot\widetilde{q}_{13,*}( td(-T_{\mj_3})\cap\tau(\widetilde{q}_{23}^*(\cp)\otimes\widetilde{q}_{12}^*(\cp^{-1})))\big\}\nonumber\\
&=&q_{1,*}(\widetilde{q}_2^*(a)([\Delta_{\mj}]))=a.
\end{eqnarray}
where the third equality is because of the functoriality of push-forwards (\S\ref{appBT} ($A_2$)),  the sixth equality is because of the projection formula (\S\ref{appBT} ($A_7$)), and the seventh equality is because of (\ref{sedual}), Theorem \ref{Ful18.3} (1) and the fact $\tau(\mo_{\Delta_{\mj}})=td(T_{\mj_3})\cap\Delta_{\mj}$, and the last equality holds by Lemma \ref{ada}.

The proposition is proved. 
\end{proof}

\begin{lemma}\label{ada}For any $a\in H^*(J\ra pt)$, we have
\[q_{1,*}(\widetilde{q}_2^*(a)([\Delta_{\mj}]))=a.\]
\end{lemma}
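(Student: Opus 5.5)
The plan is to reduce the claim to the standard identity for the diagonal in a smooth ambient space, using the bivariant formalism already set up in \S\ref{appBT}. Let $\delta:\mj\hookrightarrow\mj\times_B\mj$ be the diagonal embedding. Since $\mj\times_B\mj$ is only an l.c.i., I would not work with $\delta$ directly there. Instead I would use that $[\Delta_{\mj}]=\delta_*[\mj]$ as a Borel--Moore class, and that $\widetilde{q}_2^*(a)\in H^*(J\times J\xrightarrow{\imath_{J\times J}}\mj\times_B\mj)$ acts on it through the product rule. The key step is commutativity of pull-back with push-forward (\S\ref{appBT} ($A_6$)), applied to the Cartesian square obtained by pulling $\delta$ back along $\imath_{J\times J}$. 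That fibre product is the diagonal $\delta_J:J\hookrightarrow J\times J$, since $J=\mj\times_B\{0\}$ and $\Delta_{\mj}\cap (J\times J)=\Delta_J$ scheme-theoretically. This gives
\[\widetilde{q}_2^*(a)(\delta_*[\mj])=\delta_{J,*}\big(\delta^*\widetilde{q}_2^*(a)([\mj])\big).\]

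Next I would use functoriality of pull-backs (\S\ref{appBT} ($A_3$)). Because $\widetilde{q}_2\circ\delta=id_{\mj}$, the pull-back $\delta^*\widetilde{q}_2^*(a)$ is $(\widetilde{q}_2\circ\delta)^*(a)=a\in H^*(J\xrightarrow{\imath_J}\mj)$. By the convention fixed before (\ref{fourdia1}), $a\cdot[\mj]=a$ as an element of $H_*(J)$. So the left-hand side becomes $q_{1,*}\delta_{J,*}(a)=(q_1\circ\delta_J)_*(a)=a$, using functoriality of push-forward (\S\ref{appBT} ($A_2$)) and $q_1\circ\delta_J=id_J$.

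The main obstacle is justifying the pull-back step. Bivariant pull-back is defined along arbitrary morphisms for Cartesian squares, but I must check two things. First, the relevant square
\[\xymatrix{J\ar[r]^{\delta_J}\ar[d]_{\imath_J}&J\times J\ar[d]^{\imath_{J\times J}}\\ \mj\ar[r]_{\delta}&\mj\times_B\mj}\]
must really be Cartesian; this is the fibre-product computation above. Second, pulling $a$ back first along $\widetilde{q}_2$ and then along $\delta$ must be compatible with the composite square for $\widetilde{q}_2\circ\delta=id$, which is the content of ($A_3$). A subtler point is that $[\Delta_{\mj}]$ here means $\delta_*[\mj]$, and not some refined class coming from (\ref{sedual}). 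In the proof of Proposition \ref{inverse}, $\tau(\mo_{\Delta_{\mj}})$ was converted to $td\cap[\Delta_{\mj}]$ using smoothness of $\mj$, so this identification is consistent. If ($A_6$) turns out to need properness of $\delta$, that condition holds since $\delta$ is a closed embedding.
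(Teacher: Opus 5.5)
Your proof is correct and is essentially the paper's argument. The paper uses the same Cartesian square (\ref{fourdia3}) and writes $[\Delta_{\mj}]=\widetilde{\Delta}_*[id_{\mj}]\cdot[\mj]$. It then applies the projection formula, ($A_3$) with $\widetilde{q}_2\circ\widetilde{\Delta}=id_{\mj}$, and ($A_2$) with $q_1\circ\Delta=id_J$, exactly as you do; you simply work with the Borel--Moore class $\delta_*[\mj]$ directly instead of carrying the factor $\cdot[\mj]$ along.

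One correction: the step $\widetilde{q}_2^*(a)(\delta_*[\mj])=\delta_{J,*}\bigl(\delta^*\widetilde{q}_2^*(a)([\mj])\bigr)$ is the projection formula ($A_7$), with $g=\delta$ proper, $d=[\mj]$ and $Z=pt$. It is not ($A_6$), which concerns pulling back a pushed-forward bivariant class.
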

\begin{proof}It suffices to show that 
\[q_{1,*}(\widetilde{q}_2^*(a)\cdot [\Delta_{\mj,1}]\cdot [\mj])=a,\]
where for $i=1,2$, $[\Delta_{\mj,i}]\in H^*(\mj\times_B\mj\xrightarrow{\widetilde{q}_i}\mj)$ is the preimage of $[\Delta_{\mj}]$ under the isomorphism $H^*(\mj\times_B\mj\xrightarrow{\widetilde{q}_i}\mj)\ra H^*(\mj\times_B\mj\ra pt)$. 

We have the following Cartesian diagram
\begin{equation}\label{fourdia3}\xymatrix@C=1.5cm{J\ar@{^{(}->}[r]^{\imath_{J}}\ar@{^{(}->}[d]_{\Delta}&\mj\ar@{^{(}->}[d]^{\widetilde{\Delta}}\\J\times J\ar@{^{(}->}[r]^{\imath_{J\times J}}\ar @<0.5ex> [d]^{q_1}  \ar @<-0.5ex> [d]_{q_2}&\mj\times_B\mj\ar @<0.5ex> [d]^{\widetilde{q}_1}  \ar @<-0.5ex> [d]_{\widetilde{q}_2}\\
J\ar@{^{(}->}[r]_{\imath_{J}}& \mj}.
\end{equation}

We have $[\Delta_{\mj,i}]=\widetilde{\Delta}_*[\widetilde{q}_i\circ\widetilde{\Delta}]\in H^*(\mj\times_B\mj\xrightarrow{\widetilde{q}_i}\mj)$ and by \S\ref{appBT} ($A_7$) and ($A_3$) we have
\[\widetilde{q}^*_2(a)\cdot \widetilde{\Delta}_*[\widetilde{q}_i\circ\widetilde{\Delta}]=\Delta_*(\widetilde{\Delta}^*\widetilde{q}^*_2(a)\cdot [\widetilde{q}_i\circ\widetilde{\Delta}])=\Delta_*((\widetilde{q}_2\circ\widetilde{\Delta})^*(a)\cdot [\widetilde{q}_i\circ\widetilde{\Delta}]).\]
Since $\widetilde{q}_i\circ \widetilde{\Delta}=id_{\mj}$, we have 
\[\widetilde{q}_2^*(a)\cdot [\Delta_{\mj,i}]\cdot [\mj]=\Delta_*((id_{\mj})^*(a)\cdot [id_{\mj}]\cdot [\mj])=\Delta_*(a\cdot [\mj]).\]
Finally by \S\ref{appBT} ($A_2$) and ($A_4$), we have
\begin{eqnarray}q_{1,*}(\widetilde{q}_2^*(a)([\Delta_{\mj}]))&=&q_{1,*}(\widetilde{q}_2^*(a)\cdot [\Delta_{\mj,i}]\cdot [\mj])\nonumber\\&=&q_{1,*}\Delta_*(a\cdot [\mj])\nonumber\\&=&q_{1,*}\Delta_*(a)\cdot [\mj]\nonumber\\&=&(q_{1}\circ\Delta)_*(a)\cdot [\mj]\nonumber\\&=&(id_J)_*(a)\cdot[\mj]=a\cdot[\mj]=a.\nonumber\end{eqnarray}
\end{proof}

\subsection{The main result.}
We define two operators $e,f$ on $H_*(J)$.  
\[e:H_*(J)\ra H_*(J),~~a\mapsto \z\cap a;\]
\[f:=-\kf\circ e\circ \kf^{-1}=-\kf\circ e\circ\kf_1.\]  
\begin{thm}\label{main}For any $\beta\in H_*(J)$, the following statements are equivalent.
\begin{enumerate}
\item $\beta\in D_kH_*(J)$;
\item $\gamma=(AJ_{n,*})^{-1}(\beta)\in H^{\phi}_{n-k}\subset \mh_n$ for any $n\geq 2g$;
\item $[f,e](\beta):=f\circ e(\beta)-e\circ f(\beta)=(k-g)\beta$;
\item $f\circ e^i(\beta)-e^i\circ f(\beta)=i(k-g-i+1)e^{i-1}(\beta)$ for all $i=1,\cdots,g$.
\end{enumerate}
\end{thm}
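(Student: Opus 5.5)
The equivalence $(1)\Leftrightarrow(2)$ is exactly Proposition \ref{etade}. My plan is to establish $(1)\Rightarrow(3)$ next, and then to handle $(3)\Leftrightarrow(4)$ and $(3)\Rightarrow(1)$ by formal $sl_2$-type bookkeeping. For the latter I first check that $e$ is homogeneous for the $D$-grading, namely $e(D_kH_*(J))\subset D_{k-2}H_*(J)$.

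To see this, work on $\mh_n$ via $AJ_{n,*}$. By Remark \ref{AJinv} we have $AJ_{n,*}(\omega_n^{n-g+1}\cap[C^{[n]}])=\Theta\cap[J]$. Moreover $\omega_n\cap-$ is $\mu_+[pt]\mu_-[pt]$ on $H_*(C^{[n]})$, because $\omega_n\cap[C^{[n]}]=[i(C^{[n-1]})]$ and $i$ is a regular embedding. Substituting Proposition \ref{chen} into (\ref{preAJ}) gives
\[(AJ_{n,*})^{-1}(\beta)=\sum_{i}\frac{(-\Theta)^i}{i!}\,\omega_n^{n-g-i}\cap AJ^!(\beta).\]
From this I expect $e$ to transport to an operator built from $\mu_+[pt]^{2}$-type terms composed with $\mu_-[C]$-type terms. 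Combined with Lemma \ref{mumu}, this should show that $e$ shifts the $\phi$-eigenvalue by $+2$, which is the same as shifting $D$ by $-2$.

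The heart of the proof is $(1)\Rightarrow(3)$. My target is the identity
\[(AJ_{n,*})^{-1}\circ[f,e]\circ AJ_{n,*}=(n-g)\,\mathrm{id}-\phi\quad\text{on }\mh_n .\]
This gives (3) by Proposition \ref{etade}, since $\phi=n-k$ on $H^{\phi}_{n-k}$ yields $[f,e]=k-g$. To compute $f=-\kf e\kf_1$, I will use the following steps.

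\begin{enumerate}
\item Since $(AJ_n\times id_J)^*P=P_n$, functoriality of $ch^{\tau}$ in bivariant theory (Theorem \ref{Ful18.3}, carried out relatively over $B$ as in Proposition \ref{indep}) gives
\[\kf\circ AJ_{n,*}=\text{(push forward of } ch^{\tau}(\cp_n)\cap-\text{)} .\]
\item Apply the factorization (\ref{tengl}) of Remark \ref{compgl}, $\widetilde q^*\cp_{n+1}\cong\widetilde p^*\cp_n\otimes\widetilde\sigma^*\mf$. By Proposition \ref{comp}(2) the tensor product is underived, so the Chern characters multiply on the flag Hilbert scheme.
\item Since $ch^{\tau}(\mf)$ on $C\times J$ expands as $1+(\text{class of }[x])+\dots$ times $e^{-\Theta}$-type terms, the Fourier transform should intertwine $\mu_-[C]$ with cap product by $\Theta$, up to sign and the normalization at $x$. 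Equivalently, $f$ transported to $\mh_n$ becomes essentially $\mu_+[pt]\mu_-[C]$ corrected by $\omega_n$-terms.
\item Compute the commutator using Theorem \ref{Ren1}(i) and Lemma \ref{comw}, namely $q^*\omega_{n+1}=p^*\omega_n+\sigma^*[x]$. This is where the constant $n-g$ arises, from $\dim J=g$ and the rank $n-g+1$ of $E_n$.
\end{enumerate}

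I expect the main obstacle to be the third step: making the intertwining precise in bivariant theory on the singular spaces, with the Todd-class corrections cancelling as in the proof of Proposition \ref{inverse}.

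The remaining implications are formal. For $(3)\Rightarrow(4)$, suppose $\beta\in D_k$, so that $e^j\beta\in D_{k-2j}$. Applying (3) to each $e^j\beta$ and writing $[f,e^i]=\sum_{j=0}^{i-1}e^{i-1-j}[f,e]e^{j}$ gives
\[\sum_{j=0}^{i-1}(k-2j-g)\,e^{i-1}\beta=i(k-g-i+1)\,e^{i-1}\beta .\]
The implication $(4)\Rightarrow(3)$ is the case $i=1$. For $(3)\Rightarrow(1)$, decompose $\beta=\sum_k\beta_k$ with $\beta_k\in D_k$. By $(1)\Rightarrow(3)$ the operator $[f,e]$ acts on $\beta_k$ by the scalar $k-g$, and these scalars are distinct for distinct $k$. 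Hence $[f,e]\beta=(k_0-g)\beta$ forces $\beta=\beta_{k_0}$.
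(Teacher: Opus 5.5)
The formal parts of your plan are fine. The steps $(1)\Leftrightarrow(2)$, $(4)\Rightarrow(3)$ and $(3)\Rightarrow(1)$ (via distinct eigenvalues) all go through, and your target identity $(AJ_{n,*})^{-1}\circ[f,e]\circ AJ_{n,*}=(n-g)\,\mathrm{id}-\phi$ on $\mh_n$ is true.

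\textbf{The gap.} Your route to (4) depends on the homogeneity $e(D_kH_*(J))\subset D_{k-2}H_*(J)$, and the mechanism you sketch for it cannot work. You do need homogeneity: the eigenvalues of $[f,e]$ alone say nothing about how $e$ moves between eigenspaces, so they do not give $[[f,e],e]=-2e$.

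By Theorem \ref{Ren1}(i),(ii), $V(C)\cong W\otimes\mathbb{Q}[\mu_{+}[pt],\mu_{+}[C]]$, with $\mu_{-}[C]$ and $\mu_{-}[pt]$ acting as $\partial/\partial\mu_{+}[pt]$ and $\partial/\partial\mu_{+}[C]$. Hence every operator built from the four $\mu$'s acts as $\mathrm{id}_W\otimes(\text{something})$. Since $\mh_n=\bigoplus_k\mu_{+}[pt]^{n-k}(W\cap H_*(C^{[k]}))$, such an operator preserves every $\phi$-eigenspace and can never shift the eigenvalue by $2$. Lemma \ref{mumu} only confirms this; for instance $\mu_{+}[pt]^2\mu_{-}[C]^2=\phi(\phi-1)$.

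The same objection applies to your step 3 reformulation, ``$f$ on $\mh_n$ is essentially $\mu_{+}[pt]\mu_{-}[C]$ plus $\omega_n$-corrections''. Note also that $\omega_n\cap-=\mu_{+}[pt]\mu_{-}[pt]$ vanishes on $\mh_n$. The $W$-component can only move through $e$ itself, which enters via $(AJ_{n,*})^{-1}$ and Proposition \ref{chen}.

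\textbf{What is needed.} The intertwining should be stated for $AJ^!$, not for $(AJ_{n,*})^{-1}$:
\[\mu_{-}[C]\big(\omega_{n+1}^m\cap AJ_{n+1}^!(\beta)\big)=\omega_n^m\cap AJ_n^!(f(\beta))+m\,\omega_n^{m-1}\cap AJ_n^!(\beta).\]
It comes from three ingredients:
\begin{itemize}
\item the factorization (\ref{tengl});
\item Lemma \ref{comw}, whose $\sigma^*[x]$ term produces $i^!AJ_{n+1}^!=AJ_n^!$, i.e.\ the derivative term;
\item $ch_{>0}(E_n)=-\Theta$ from Proposition \ref{chen}, which identifies the push-forward of $ch^{\tau}(\mf)$ along $\mc$ and so turns $p_*q^!AJ_{n+1}^!(\beta)$ into $AJ_n^!(\kf(-\Theta\cap\kf^{-1}\beta))$.
\end{itemize}

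Then carry out the following computation:
\begin{itemize}
\item apply the intertwining to $\gamma=(AJ_{n+1,*})^{-1}(\beta)=\sum_j\frac{(-1)^j}{j!}\omega_{n+1}^{n+1-g-j}\cap AJ_{n+1}^!(e^j\beta)$;
\item apply $\mu_{+}[pt]=i_*$;
\item eliminate $\omega_{n+1}^{n+2-g}$ using the Chern relation;
\item compare coefficients of $\omega_{n+1}^{n+1-g-j}\cap AJ_{n+1}^!(-)$ in $\phi(\gamma)-(n+1-k)\gamma=0$, using the projective bundle decomposition of $H_*(C^{[n+1]})$.
\end{itemize}
The $j$-th coefficient is exactly $[f,e^{j+1}]\beta=(j+1)(k-g-j)e^j\beta$, so $(2)\Rightarrow(4)$ follows in one stroke. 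Your identity for $[f,e]$ is just the $AJ_{n+1,*}$-push-forward of this computation, since only the $j=0$ term survives. Homogeneity of $e$ then becomes a consequence rather than an input: it follows from (4) with $i=2$ together with $(3)\Rightarrow(1)$.
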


By Theorem \ref{main} we see that if $\beta\in D_kH_*(J)$, then $e(\beta)\in D_{k-2}H_*(J)$.  The following corollary is straightforward.

\begin{coro}$(f,[f,e],e)$ forms a $sl_2$-triple on $H_*(J)$, whose eigenvalue decomposition coincides with the $D$-decomposition on $H_*(J)$.     
\end{coro}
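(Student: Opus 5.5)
The plan is to read off the three $sl_2$ relations from Theorem \ref{main}. I use the convention that $(E,H,F)$ is an $sl_2$-triple when
\[[E,F]=H,\qquad [H,E]=2E,\qquad [H,F]=-2F.\]
Here $E=f$, $F=e$ and $H=h:=[f,e]$. The first relation holds by definition. By the equivalence (1)$\Leftrightarrow$(3) of Theorem \ref{main}, $h$ acts on $D_kH_*(J)$ by the scalar $k-g$. The eigenvalues $k-g$ are distinct for distinct $k$, and $H_*(J)=\bigoplus_k D_kH_*(J)$. Hence $h$ is semisimple and its eigenspace decomposition is exactly the $D$-decomposition. This already gives the second half of the corollary, and it remains to prove $[h,e]=-2e$ and $[h,f]=2f$.

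For $[h,e]=-2e$, take $\beta\in D_kH_*(J)$. Formula (4) of Theorem \ref{main} with $i=1$ and $i=2$ gives
\[fe\beta=ef\beta+(k-g)\beta,\qquad fe^2\beta=e^2f\beta+2(k-g-1)e\beta.\]
Then
\[h(e\beta)=fe^2\beta-efe\beta=e^2f\beta+2(k-g-1)e\beta-e\bigl(ef\beta+(k-g)\beta\bigr)=(k-g-2)\,e\beta.\]
So $e\beta$ lies in the $(k-g-2)$-eigenspace of $h$, i.e. $e(D_k)\subset D_{k-2}$, and therefore $[h,e]=-2e$.

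For $[h,f]=2f$, I would transport the previous relation through the Fourier transform rather than compute directly. Put $\iota:=\kf^2$. The key claim is that $\iota$ commutes with $e$. Granting this, since $f=-\kf e\kf^{-1}$ we get
\[\kf f\kf^{-1}=-\iota e\iota^{-1}=-e,\qquad \kf e\kf^{-1}=-f,\]
and hence $\kf h\kf^{-1}=[-e,-f]=[e,f]=-h$. Thus $\kf^{-1}$ carries the $\lambda$-eigenspace of $h$ to the $(-\lambda)$-eigenspace. Follow $v$ in the $\lambda$-eigenspace through $f v=-\kf\,e\,\kf^{-1}v$: $\kf^{-1}$ sends it to eigenvalue $-\lambda$, $e$ then lowers this to $-\lambda-2$ by the previous paragraph, and $\kf$ sends it back to $\lambda+2$. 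This gives $[h,f]=2f$.

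The main obstacle is the claim that $\kf^2$ commutes with cap product by $\z$. For $C$ smooth, $\kf^2=\pm(-1)_*$ and $(-1)^*\z=\z$ because $\z$ is symmetric. In the singular case I would use Arinkin's description of $\kf\circ\kf$ at the level of $D^b(J)$: it is, up to shift, pushforward along the involution $G\mapsto\mh om(G,\mo_C)$. I would then repeat the bivariant bookkeeping of Proposition \ref{inverse}, now with $\cp$ composed with $\cp$ in place of $\cp$ composed with $\cp^{-1}$, to show that $\kf^2$ equals $\pm$ pushforward under this involution. Finally I would check that the involution fixes $\z$, e.g. via Proposition \ref{chen} or the description of $\z$ as a determinant class, using relative duality for $F$.

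If this route stalls, the fallback is the standard $sl_2$ argument. Since $h$ is semisimple with $[h,e]=-2e$ and $[f,e]=h$, one decomposes $f=\sum_j f_j$ with $[h,f_j]=jf_j$. Comparing degrees in $[f,e]=h$ shows $[f_2,e]=h$ and $[f_j,e]=0$ for $j\neq2$. Then one would use formula (4) for all $i$, which pins down $f$ on each $e$-string, to force $f_j=0$ for $j\neq 2$.
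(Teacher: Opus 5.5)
Your treatment of $h:=[f,e]$ and of $[h,e]=-2e$ is correct, and it matches the paper's own argument. After Theorem \ref{main} the paper only notes $e(D_kH_*(J))\subset D_{k-2}H_*(J)$ and calls the corollary straightforward; it gives no separate argument for $[h,f]=2f$. You are right that this relation needs an extra input, but your proposal does not provide one, so the statement for $f$ itself is not proved.

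Your fallback cannot work. Identity (4) for all $i$ is a formal consequence of (3) and $[h,e]=-2e$: on $D_kH_*(J)$ the operator $[f,e^i]=\sum_{j=0}^{i-1}e^j\,h\,e^{i-1-j}$ acts as $i(k-g-i+1)e^{i-1}$. Moreover, (3) and (4) are unchanged if $f$ is replaced by $f+z$ for any $z$ commuting with $e$, for instance $z=\mathrm{id}$, and this breaks $[h,f]=2f$. So no argument using only the identities of Theorem \ref{main} can force $f_j=0$ for $j\neq 2$.

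What your weight decomposition does prove is that $(f_2,h,e)$ is an $sl_2$-triple, where $f_2$ is the $\mathrm{ad}\,h$-weight-$2$ part of $f$. That suffices for the eigenvalue statement. Since Deligne's filtration depends only on $e$, it also gives the Lefschetz/perverse consequences in Corollary \ref{maincoro}. It does not give the claim for $(f,[f,e],e)$.

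Everything about $f$ therefore rests on your key claim $[\kf^2,e]=0$. Granting it, your argument $\kf h\kf^{-1}=-h$ and the eigenvalue chase are correct. But the claim is only sketched, and each step is substantial.

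(a) You need, in the relative setting over $B$, an identification of the convolution of $\cp$ with itself with the structure sheaf of the graph of the involution $\iota\colon G\mapsto\mh om(G,\mo_C)$, up to twist and shift. The paper only uses the statement (\ref{sedual}) for $\cp$ and $\cp^{-1}$.

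(b) The bookkeeping of Proposition \ref{inverse} does not transfer verbatim. There the kernel $ch^{\tau}(\cp)$ is paired with $\tau(\cp^{-1})$. The only leftover Todd factor then sits on the third copy of $\mj$ and cancels against $\tau(\mo_{\Delta_{\mj}})$. In $\kf\circ\kf$ two $ch^{\tau}$-kernels are paired, which leaves a factor $td(-T_{\mj})$ on the middle copy. That copy is integrated out, so the factor cannot be pulled out of the pushforward. You must show it is harmless before concluding $\kf^2=\pm\iota_*$.

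(c) You need $\iota^*\z=\z$ in $H^2(J)$ on the singular fiber. One way is Grothendieck duality for $Rp_*\mh om(F,\mo_{C\times J})$, combined with the computation in the proof of Proposition \ref{chen}.

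Until (a)--(c) are carried out, $[h,f]=2f$ is not established, and hence neither is the corollary as stated.
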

We denote by $e^{\vee},f^{\vee}$ the dual maps of $e,f$ respectively on $H^*(J)$.  Notice that $e^{\vee}(\alpha)=\alpha\cup\z$ for any $\alpha\in H^*(J)$, $e^{\vee}(D_kH^*(J))\subset D_{k+2}H^*(J)$ and $([f,e])^{\vee}=[e^{\vee},f^{\vee}]$.
\begin{coro}\label{maincoro}$(e^{\vee}, [e^{\vee},f^{\vee}], f^{\vee})$ forms a $sl_2$-triple on $H^*(J)$, whose eigenvalue decomposition coincides with the $D$-decomposition on $H^*(J)$.  

In particular, the Lefschetz filtration induced by cupping $\z$ on $H^*(J)$ is $W_{k}(H^*(J))=\bigoplus_{j\geq 2g-k}D_jH^*(J)$ and it is opposite to the perverse filtration $P_{\leq m}H^*(J)=\bigoplus_{j\leq m} D_{j}H^*(J)$ (see (\ref{D=P})), i.e. $W_{k}(H^*(J))\cap P_{\leq m}H^*(J)=\{0\}$ if $m+k<2g$.   
\end{coro}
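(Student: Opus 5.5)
The plan is to obtain Corollary \ref{maincoro} by dualizing the homological statements, Theorem \ref{main} and the corollary after it, and then to read off the Lefschetz filtration from the uniqueness in \cite[Proposition 1.6.1]{Del}.

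\textbf{Step 1: the $sl_2$-triple on homology.} By Theorem \ref{main} (1)$\Leftrightarrow$(3), the operator $h:=[f,e]$ acts on $D_kH_*(J)$ by the scalar $k-g$. Comparing (3) and (4) with $i=1$, and using that $\beta\in D_kH_*(J)$ forces $e(\beta)\in D_{k-2}H_*(J)$, we get $[h,e]=-2e$. The remaining relation $[h,f]=2f$ should follow from the conjugation $f=-\kf e\kf^{-1}$. I would first try to show that $\kf$ carries $D_k$ onto $D_{2g-k}$, which gives $f(D_k)\subset D_{k+2}$ and hence the relation. Failing that, I would use finite-dimensional $sl_2$ theory: $e$ is nilpotent and $h$ is semisimple with integer eigenvalues $k-g$, and together with relation (4) for all $i$ this pins down a representation. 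By Jacobson--Morozov there is a unique $f'$ completing $(e,h)$ to a triple, and relation (4) then forces $f-f'$ to commute with all powers of $e$ and to shift weights appropriately, so $f=f'$.

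\textbf{Step 2: dualize.} We have $e^{\vee}=\cup\Theta$, $f^{\vee}$, and $[f,e]^{\vee}=[e^{\vee},f^{\vee}]$. Transposing reverses compositions, so the bracket relations carry over with $e^{\vee}$ in the raising slot: $[h^\vee,e^\vee]=2e^\vee$ and $[h^\vee,f^\vee]=-2f^\vee$. Thus $(e^{\vee},h^{\vee},f^{\vee})$ is an $sl_2$-triple. Under the pairing $H^*(J)=H_*(J)^{\vee}$, the $D$-grading on $H^*(J)$ is dual to that on $H_*(J)$, since both come from $V^c(C)=V(C)^{\vee}$ with $D_n$ dual to $D_n$, via Theorems \ref{Ren1} and \ref{Ren2}. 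Hence $h^{\vee}$ acts on $D_kH^*(J)$ by $k-g$, which is the claimed eigenvalue decomposition.

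\textbf{Step 3: the filtration.} In the triple, $L=e^{\vee}$ raises the weight $k-g$ by $2$, and the weights run over $[-g,g]$. Set $W'_k:=\bigoplus_{j\geq 2g-k}D_jH^*(J)$. Then $e^{\vee}W'_k\subset W'_{k-2}$, since $D_j$ maps to $D_{j+2}$. Moreover $\mathrm{Gr}^{W'}_{g+m}=D_{g-m}$, which is the weight $-m$ space, and $\mathrm{Gr}^{W'}_{g-m}=D_{g+m}$, which is the weight $m$ space. By $sl_2$ theory, $(e^{\vee})^m$ maps the first isomorphically onto the second. So $W'$ satisfies the two defining properties, and by the uniqueness in \cite[Proposition 1.6.1]{Del} we get $W=W'$.

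Opposition with $P_{\leq m}=\bigoplus_{j\leq m}D_j$ from (\ref{D=P}) is then immediate. Since $W_k$ and $P_{\leq m}$ are sums of distinct graded pieces, their intersection is $\bigoplus_{2g-k\leq j\leq m}D_j$, which is zero when $m<2g-k$.

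\textbf{Main obstacle.} The expected obstacle is Step 1's relation $[h,f]=2f$, since Theorem \ref{main} only directly controls $[f,e]$ and its powers. Step 2 is bookkeeping of signs under transposition.
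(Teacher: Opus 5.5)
Your Steps 2 and 3 are correct and follow the paper's route: Theorem~\ref{main}, then dualize using $([f,e])^{\vee}=[e^{\vee},f^{\vee}]$, then identify $W_k$ by Deligne's uniqueness. The gap is the relation $[h,f]=2f$ in Step 1, and your fallback argument for it fails.

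Relation (4) carries no information beyond (3) together with $e(D_k)\subset D_{k-2}$; the latter is the content of (4) for $i=2$. Indeed $[f,e^i]=\sum_{j=0}^{i-1}e^{j}[f,e]e^{i-1-j}$, and $h$ acts on $e^{i-1-j}\beta$ by $k-g-2(i-1-j)$, so summing gives exactly $i(k-g-i+1)e^{i-1}\beta$. Consequently (1)--(4) determine $f$ only modulo the centralizer of $e$. For example, $f+e$ satisfies (3) and (4) just as well. Yet $f$ and $f+e$ cannot both satisfy $[h,\cdot\,]=2(\cdot)$, since together with $[h,e]=-2e$ that would force $e=0$.

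Knowing only that $f-f'$ commutes with $e$ can never force it to have $\mathrm{ad}\,h$-weight $2$. View $\End(H_*(J))$ as an $sl_2$-module via $\mathrm{ad}$ of $(f',h,e)$. Then the kernel of $\mathrm{ad}\,e$ consists of lowest weight vectors, whose weights are $\le 0$. So $f=f'$ does not follow.

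Your first route, $\kf(D_k)=D_{2g-k}$, is exactly the missing input: it is the singular analogue of $\kf\colon H_k\to H_{2g-k}$, and nothing in Theorem~\ref{main} provides it. The paper itself passes from Theorem~\ref{main} to this corollary with the word ``straightforward'' and does not supply it either.

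Everything except the literal identity of the third member of the triple can be rescued without this input. Decompose $f=\sum_w f_w$ with $f_w(D_kH_*(J))\subset D_{k+w}H_*(J)$, i.e.\ $[h,f_w]=wf_w$. Since $[h,[f_w,e]]=(w-2)[f_w,e]$ and $h$ has weight $0$, the identity $[f,e]=h$ splits into $[f_2,e]=h$ and $[f_w,e]=0$ for $w\neq 2$. Hence $(f_2,h,e)$ is an honest $sl_2$-triple with the same $h$.

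Dually, $(e^{\vee},[e^{\vee},f^{\vee}],f_2^{\vee})$ is an $sl_2$-triple on $H^*(J)$, because $[e^{\vee},f_2^{\vee}]=[f_2,e]^{\vee}=[f,e]^{\vee}=[e^{\vee},f^{\vee}]$. Your Steps 2 and 3 use only $e^{\vee}$, $h^{\vee}$ and the existence of some such triple. So they go through verbatim and give the eigenvalue decomposition, $W_k=\bigoplus_{j\ge 2g-k}D_jH^*(J)$, and the opposition to $P_{\le m}$.

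The claim that $f^{\vee}$ itself completes the triple is equivalent to $f=f_2$, i.e.\ $f(D_k)\subset D_{k+2}$, and needs a genuinely new argument. For instance, it would follow if $\kf^{2}$ commutes with $e$, as $\kf^2=\pm[-1]_*$ does in the smooth case. Then $\kf^{-1}e\kf=-f$, hence $\kf^{-1}h\kf=[-e,-f]=-h$, so $\kf$ exchanges $D_k$ and $D_{2g-k}$.
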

\begin{rem}Corollary \ref{maincoro} proves Conjecture 2.17 in \cite{MY} for curves over $\bc$.
\end{rem}
\begin{proof}[Proof of Theorem \ref{main}](1)$\Leftrightarrow$(2) is by Proposition \ref{etade}.

(4)$\Rightarrow$(3) is obvious since (3) is the special case of (4) for $i=1$.   

To prove (2) $\Leftrightarrow$ (3), it suffices to show (2) $\Rightarrow$ (3), since we have the total decomposition $\mh_n=\bigoplus_{i=0}^{2g}H_{n-i}^{\phi}$ and $AJ_{n,*}:\mh_n\ra H_*(J)$ is an isomorphism.  Analogously, to prove (2) $\Leftrightarrow$ (4), it suffices to show (2) $\Rightarrow$ (4).  Therefore we only need to show (2) $\Rightarrow$ (4).

Recall that $\phi=\mu_{+}[pt]\circ\mu_{-}[C]$.  By (\ref{preAJ}) and Proposition \ref{chen} we have 
\begin{eqnarray}\gamma=(AJ_{n+1,*})^{-1}(\beta)&=&\sum_{j=0}^{g}(-1)^j\frac1{j!}\omega_{n+1}^{n+1-g-j}\cap AJ_{n+1}^!(\z^j\cap \beta)\nonumber\\
&=&\sum_{j=0}^{g}(-1)^j\frac1{j!}\omega_{n+1}^{n+1-g-j}\cap AJ_{n+1}^!(e^j( \beta)).\end{eqnarray}

By Proposition \ref{mu-c}, we have\footnotesize
\begin{eqnarray}&&\phi(\gamma)=\mu_{+}[pt]\mu_{-}[C](\gamma)\nonumber\\
&=&\mu_{+}[pt]\left\{\sum_{j=0}^{g}(-1)^j\frac1{j!}\big\{\omega_{n}^{n+1-g-j}\cap AJ_{n}^!(f(e^j(\beta)))+(n+1-g-j)\omega_n^{n-g-j}\cap AJ_n^!(e^j(\beta))\big\}\right\}\nonumber
\end{eqnarray} \normalsize
Since $\mu_{+}[pt]=i_*$, $AJ_n^!=i^!AJ_{n+1}^!$ and $i_*(\omega_n^k)=\omega_{n+1}^{k+1}$, we have  
\footnotesize
\begin{eqnarray}&&\phi(\gamma)=\mu_{+}[pt]\mu_{-}[C](\gamma)\nonumber\\
&=&\sum_{j=0}^{g}(-1)^j\frac1{j!}\left(\omega_{n+1}^{n+2-g-j}\cap AJ_{n+1}^!(f(e^j(\beta)))+(n+1-g-j)\omega_{n+1}^{n+1-g-j}\cap AJ_{n+1}^!(e^j(\beta))\right)\nonumber
\end{eqnarray} \normalsize
Since $\omega_{n+1}^{n+2-g}=\sum_{j=1}^g(-1)^{j-1}\frac1{j!}AJ_{n+1}^*(\z^j)\cup \omega_{n+1}^{n-g+2-j}$, we have 
\footnotesize
\begin{eqnarray}&&\phi(\gamma)=\mu_{+}[pt]\mu_{-}[C](\gamma)\nonumber\\
&=&\sum_{j=0}^{g-1}\omega_{n+1}^{n+1-g-j}\cap AJ_{n+1}^!\left( \frac{(-1)^j}{j!}(n+1-g-j)e^j(\beta)+\frac{(-1)^{j+1}}{(j+1)!}f(e^{j+1}(\beta))+\frac{(-1)^j}{(j+1)!}e^{j+1}(f(\beta))\right)\nonumber\\
&&+\frac{(-1)^g}{g!}(n+1-2g)\omega_{n+1}^{n+1-2g}\cap AJ_{n+1}^!(e^g(\beta))\nonumber
\end{eqnarray} \normalsize
Now assume $\gamma=(AJ_{n+1,*})^{-1}(\beta)\in H_{n+1-k}^{\phi}$, then $\phi(\gamma)=(n+1-k)\gamma$ by definition.  Then 
\footnotesize
\begin{eqnarray}\label{fineq}&&0=\phi(\gamma)-(n+1-k)\gamma\nonumber\\
&=&\sum_{j=0}^{g-1}\omega_{n+1}^{n+1-g-j}\cap AJ_{n+1}^!\left( \frac{(-1)^j}{j!}(k-g-j)e^j(\beta)+\frac{(-1)^{j+1}}{(j+1)!}f(e^{j+1}(\beta))+\frac{(-1)^j}{(j+1)!}e^{j+1}(f(\beta))\right)\nonumber\\
&&+\frac{(-1)^g}{g!}(k-2g)\omega_{n+1}^{n+1-2g}\cap AJ_{n+1}^!(e^g(\beta))\nonumber\\
\end{eqnarray} \normalsize
By (\ref{fineq}) we see that if $e^g(\beta)\neq 0$, then $\beta=[J]$ and $k=2g$.  Also for $j=0,1,\cdots, g-1$, we have
\[ (k-g-j)(j+1)e^j(\beta)=f(e^{j+1}(\beta))-e^{j+1}(f(\beta)).\]

Therefore (2) $\Rightarrow$ (4) and the theorem is proved.
\end{proof}

\begin{prop}\label{mu-c}For any $\beta\in H_*(J)$, we have
\[\mu_{-}[C](\omega_{n+1}^k\cap AJ_{n+1}^{!}(\beta))=\omega_{n}^k\cap AJ_n^!(\kf(-\z\cap\kf^{-1}(\beta)))+k\omega_n^{k-1}\cap AJ_{n}^!(\beta).\]
\end{prop}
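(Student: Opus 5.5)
The plan is to compute $\mu_{-}[C]=p_*q^!$ in two pieces. First, expand $q^*\omega_{n+1}^k$ using Lemma \ref{comw}. Second, handle the $k=0$ term through the Fourier kernel, using Proposition \ref{comp} (3).

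By Lemma \ref{comw}, $q^*\omega_{n+1}=p^*\omega_n+\sigma^*[x]$. Since $[x]^2=0$ in $H^4(C)=0$, we get
\[q^*\omega_{n+1}^k=p^*\omega_n^k+k\,p^*\omega_n^{k-1}\cup\sigma^*[x].\]
Refined Gysin maps commute with cap products by pulled-back classes, and the projection formula holds for $p_*$. Hence
\[p_*q^!(\omega_{n+1}^k\cap AJ_{n+1}^!\beta)=\omega_n^k\cap p_*q^!AJ_{n+1}^!\beta+k\,\omega_n^{k-1}\cap p_*\big(\sigma^*[x]\cap q^!AJ_{n+1}^!\beta\big).\]
For the second term, $[x]$ is represented by the Cartier divisor $\{x\}\subset C^{sm}$. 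The locus $\sigma^{-1}(x)\subset C^{[n,n+1]}$ maps isomorphically under $p$ onto $C^{[n]}$, and composing with $q$ gives the embedding $i$. Since $AJ_{n+1}\circ i=AJ_n$, this term equals $i^!AJ_{n+1}^!\beta=AJ_n^!\beta$. I will carry out this identification with the same divisor-intersection argument used in Lemma \ref{nouse}, working in the relative family $\mc^{[n,n+1]}$ so that all the Gysin maps are honest l.c.i. pullbacks.

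The main term is $p_*q^!AJ_{n+1}^!\beta$. Write $\beta=\kf(\alpha)$ with $\alpha=\kf_1(\beta)$, which is possible by Proposition \ref{inverse}. By base change of bivariant classes along $AJ_{n+1}\times id_J$ (properties ($A_5$), ($A_6$)), we get
\[AJ_{n+1}^!\kf(\alpha)=\mathrm{pr}_{1,*}\big(\mathrm{pr}_2^*\alpha\cdot ch^{\tau}(\cp_{n+1})\big)\]
on $C^{[n+1]}\times J$. Applying $q^!$ replaces $ch^{\tau}(\cp_{n+1})$ by the class of $(\widetilde q\times_B id)^*\cp_{n+1}$. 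By (\ref{tengl}) this sheaf is $(\widetilde p\times id)^*\cp_n\otimes(\widetilde\sigma\times id)^*\mf$. By the Tor-vanishing in Proposition \ref{comp} (2) and Theorem \ref{Ful18.3}, its $ch^\tau$ is the product $\widetilde p^*ch^\tau(\cp_n)\cdot\widetilde\sigma^*ch(\mf)$.

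Next push forward along $p$. Locally $p$ looks like "choose an extra point of $C$", so by the projection formula the result reduces to $ch^\tau(\cp_n)$ multiplied by the fibre integral over $C$ of $ch(\mf)$, with the relative Todd class included. By Grothendieck–Riemann–Roch in the bivariant form this fibre integral is $ch(Rp_{2,*}F)$. Shifting by $\mo(mx)$ with $m\ge 2g-1$ does not change the degree-$2$ part in $H^2(J)$ because $F$ is normalized at $x$. Proposition \ref{chen} then identifies that degree-$2$ part with $c_1(E_m)=-\z$.

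We only need the component of homological degree matching that of $\mu_-[C]$, since $\mu_-[C]$ sends $H_k$ to $H_k$ while $p$ has relative dimension $1$. Only the $H^2(C)$-component of $ch(F)$ survives the fibre integral, giving exactly the class $-\z$ pulled back from the second factor $J$. Therefore
\[p_*q^!AJ_{n+1}^!\kf(\alpha)=AJ_n^!\,\kf(-\z\cap\alpha),\]
which is the required first term.

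I expect the main obstacle to be the last step. It requires making the fibre integral $p_*(\widetilde\sigma^*ch(\mf)\cdot -)$ rigorous in bivariant theory over the singular $C$, checking that the extra Todd factors cancel against those implicit in $ch^\tau$, and confirming that no $H^1(C)\otimes H^1(J)$ cross terms survive. My first attempt will be to carry out the whole computation on the smooth total spaces $\mc^{[n,n+1]}\times_B\mj$ and only restrict to the central fibre at the end, as in the proof of Proposition \ref{inverse}.
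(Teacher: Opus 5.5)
Your outline follows the paper's proof. You expand $q^*\omega_{n+1}^k$ by Lemma \ref{comw} (with $[x]^2=0$), identify the $\sigma^*[x]$-term with $AJ_n^!\beta$ via $\sigma^{-1}(x)\cong C^{[n]}$ and $q\circ p'=i$, and handle $p_*q^!AJ_{n+1}^!\kf(\alpha)$ through (\ref{tengl}) and a fibre integral over $C$. The first two steps are fine. The gap is in evaluating that fibre integral. Homology push-forward along $\mc^{[n]}\times_B\mc\to\mc^{[n]}$ inserts no Todd class. So what appears is $p_{2,*}(ch^{\tau}(F))$ (relatively $\widetilde s_{2,*}(ch^{\tau}(\mf))$), not $ch(Rp_{2,*}F)$. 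The two differ by the rank term. Since $td(T_C)-1=(1-g)[x]$ and $F_x\cong\mo_J$, one gets $p_{2,*}(p_1^*(td(T_C)-1)\cap ch^{\tau}(F))=(1-g)[J]$, hence $p_{2,*}ch^{\tau}(F)=ch_{>0}(E_m)\cap[J]$, which is (\ref{cceng}). With your Todd factor, the right-hand side acquires an extra summand $(1-g)\,\omega_n^k\cap AJ_n^!\beta$. This term is in general nonzero and lies two homological degrees below the left-hand side. So the identity you would derive is false; it is not merely carrying a component you can ignore.

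You then propose to keep only the degree-$2$ part by comparing homological degrees. That is not justified. It presupposes that $b\mapsto AJ_n^!\kf(b)$ shifts degree homogeneously, i.e.\ $\kf(H_j(J))\subset H_{2g-j}(J)$. This holds for smooth $C$ but is not established anywhere for singular $J$; if it were, the same comparison would already force $ch_i(E_m)=0$ for $i\ge 2$. What actually removes the higher terms is the full vanishing statement of Proposition \ref{chen}, $ch_{>0}(E_m)=ch_1(E_m)=-\z$, which you use only to name the degree-$2$ part.

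Your sketch also glosses over the technical point for which the paper needs Lemma \ref{ppnc}:
\begin{itemize}
\item $\mf$ is not locally free and $\mc^{[n,n+1]}\times_B\mj$ need not be smooth, so ``$\widetilde p^*ch^{\tau}(\cp_n)\cdot\widetilde\sigma^*ch(\mf)$'' is not a defined product.
\item $\widetilde p\times_B\widetilde\sigma$ is only birational and need not be l.c.i., so Theorem \ref{Ful18.3}(4) does not apply.
\end{itemize}
The paper instead tensors locally free resolutions $E_\bullet,N_\bullet$ in an ambient product. It uses Proposition \ref{comp}(2) and Fulton's Theorem 18.1 to show that $\widetilde q^*ch^{\tau}((\cp_{n+1})_1)$ is pulled back from $\mc^{[n]}\times_B\mc$. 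It then pushes forward by the projection formula, using $(\widetilde p\times_B\widetilde\sigma)_*[\mc^{[n,n+1]}]=[\mc^{[n]}\times_B\mc]$. Finally, it lifts the fibre integral to the cohomology class $ch_{>0}(\me_m)$ on the smooth $\mj$, so that it can be moved onto $\alpha$ as a cap product.
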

\begin{proof}We have the following diagrams
\[\xymatrix{&C^{[n,n+1]}\ar[ld]_p\ar[rd]^q\ar[d]^{\sigma}&&&\mc^{[n,n+1]}\ar[ld]_{\widetilde{p}}\ar[rd]^{\widetilde{q}}\ar[d]^{\widetilde{\sigma}}&\\ C^{[n]}&C&C^{[n+1]},&\mc^{[n]}&\mc&\mc^{[n+1]}.}\]
Recall that $\mu_{-}[C]=p_*\circ q^!$ and $q^!$ is defined as the refined Gysin pull-back via the regular embedding $\imath_{C^{[n+1]}}:C^{[n+1]}\hookrightarrow \mc^{[n+1]}$.  To be precise, we identify $H_*(C^{[n+1]})$ with $H^*(\mc^{[n+1]},\mc^{[n+1]}\setminus C^{[n+1]})$ and there exists a refined intersection product
\[-\times-:H_k(C^{[n+1]})\otimes H_l^{BM}(\mc^{n,n+1})\ra H_{k+l-2\dim \mc^{[n]}}(C^{[n,n+1]}).\]
For any $b\in H_*(C^{[n+1]})$, $q^!(b):=b\times [\mc^{[n,n+1]}]$.  This definition can be rewritten in bivariant theory since $H^*(\mc^{[n+1]},\mc^{[n+1]}\setminus C^{[n+1]})\cong H^*(C^{[n+1]}\xrightarrow{\imath_{C^{[n+1]}}}\mc^{[n+1]})$.  We have $q^!(b)=\widetilde{q}^*(b)\cdot[\mc^{[n,n+1]}]$ viewing $b$ as an element in $H^*(C^{[n+1]}\xrightarrow{\imath_{C^{[n+1]}}}\mc^{[n+1]})$.

Since $\omega_{n+1}$ is the first Chern class of a line bundle, by \cite[Proposition 17.3.2]{Ful}, we have
\[q^{!}(\omega_{n+1}^k\cap AJ_{n+1}^!(\beta))=q^*(\omega_{n+1})^k\cap q^!AJ_{n+1}^{!}(\beta)\]
By Lemma \ref{comw}, $q^*\omega_{n+1}=p^*\omega_n+\sigma^*[x]\in H^*(C^{[n,n+1]})$.  Hence
\begin{equation}\label{muc1}\mu_{-}[C](\omega_{n+1}^k\cap AJ_{n+1}^{!}(\beta))=\omega^k_n\cap\mu_{-}[C](\beta)+k\omega_n^{k-1}\cap p_*(\sigma^*[x]\cap q^!AJ_{n+1}^{!}(\beta)).
\end{equation}  
Since $x$ is a smooth point in $C$, $\sigma^{-1}(x)\cong C^{[n]}$, $p|_{\sigma^{-1}(x)}:\sigma^{-1}(x)\ra C^{[n]}$ is an isomorphism and $q|_{\sigma^{-1}(x)}=i\circ p$.  Define $p':=(p|_{\sigma^{-1}(x)})^{-1}$.  We can write a Cartesian diagram
\[\xymatrix@C=1.5cm{C^{[n]}\ar@{^{(}->}[r]^{\imath_{C^{[n]}}}\ar@{^{(}->}[d]_{p'}&\mc^{[n]}\ar@{^{(}->}[d]^{\widetilde{p}'}\\C^{[n,n+1]}\ar@{^{(}->}[r]^{\imath_{C^{[n,n+1]}}}\ar[d]_{q}&\mc^{[n,n+1]}\ar[d]^{\widetilde{q}}\\
C^{[n+1]}\ar@{^{(}->}[r]_{\imath_{C^{[n+1]}}}& \mc^{[n+1]}}.
\]

Therefore by $p\circ p'=id_{C^{[n]}}$, $i=q\circ p'$ and $\widetilde{i}=\widetilde{q}\circ\widetilde{p}'$, we have
\begin{eqnarray}\label{muc2}p_*(\sigma^*[x]\cap q^!AJ_{n+1}^{!}(\beta))&=&p_*(\widetilde{q}^*(AJ_{n+1}^!(\beta))\cdot\widetilde{p}'_*[\mc^{[n]}])
\nonumber\\&=&p_*p'_*((\widetilde{p}')^*\widetilde{q}^*(AJ_{n+1}^{!}(\beta))\cdot[\mc^{[n]}])
\nonumber\\ &=&\widetilde{i}^*(AJ_{n+1}^!(\beta))\cdot[\mc^{[n]}]\nonumber\\
&=&i^{*}AJ_{n+1}^{!}(\beta)=AJ_{n}^{!}(\beta),
\end{eqnarray}
where the last line is because $i$ and $\widetilde{i}$ are regular embeddings and $AJ_{n+1}\circ i=AJ_{n}$.

Now we compute $p_*(q^!(AJ_{n+1}^!(\beta)))$.  We use the same notations as in \S\ref{DFT}.  Let $q_1^{\star}:C^{[\star]}\times J\ra C^{[\star]}$ is the projection with $\star=n,n+1$ or $(n,n+1)$.  Let $\beta=\kf(a)=q_{1,*}(\widetilde{q}_2^*(a)\cdot ch^{\tau}(\cp_1))\cdot[\mj]$.  Then
\begin{eqnarray}AJ_{n+1}^!(\beta)&=&\widetilde{AJ}_{n+1}^*(q_{1,*}(\widetilde{q}_2^*(a)\cdot ch^{\tau}(\cp_1))\cdot[\mc^{[n+1]}]\nonumber\\
&=&(q_1^{n+1})_*\widetilde{AJ}_{n+1}^*(\widetilde{q}_2^*(a)\cdot ch^{\tau}(\cp_1))\cdot[\mc^{[n+1]}]\nonumber\\
&=&(q_1^{n+1})_*\big\{(\widetilde{AJ}_{n+1}\times_B id_{\mj})^*\widetilde{q}_2^*(a)\cdot \widetilde{AJ}_{n+1}^*(ch^{\tau}(\cp_1))\big\}\cdot[\mc^{[n+1]}]\nonumber\\
&=&(q_1^{n+1})_*\big\{(\widetilde{AJ}_{n+1}\times_B id_{\mj})^*\widetilde{q}_2^*(a)\cdot ch^{\tau}((\cp_{n+1})_1)\big\}\cdot[\mc^{[n+1]}]
\end{eqnarray}
where $ch^{\tau}((\cp_{n+1})_1)\in H^*(\mc^{[n+1]}\times_B\mj\xrightarrow{\widetilde{q}^{n+1}_1}\mc^{[n+1]})$ such that $ch^{\tau}((\cp_{n+1})_1)\cdot [\mc^{[n+1]}]=ch^{\tau}(\cp_{n+1})$, and the last equality is because $AJ_{n+1}$ is smooth.  Hence
\begin{eqnarray}\label{qaj}q^!(AJ_{n+1}^!(\beta))&=&\widetilde{q}^*\left((q_1^{n+1})_*\big\{(\widetilde{AJ}_{n+1}\times_B id_{\mj})^*\widetilde{q}_2^*(a)\cdot ch^{\tau}((\cp_{n+1})_1)\big\}\right)\cdot[\mc^{[n,n+1]}]\nonumber\\
&=&(q_1^{n,n+1})_*\big\{(\widetilde{AJ}_{n+1}\circ\widetilde{q}\times_B id_{\mj})^*\widetilde{q}_2^*(a)\cdot \widetilde{q}^*ch^{\tau}((\cp_{n+1})_1)\big\}\cdot[\mc^{[n,n+1]}]\nonumber\\
&=&(q_1^{n,n+1})_*\big\{(\widetilde{AJ}_{n}\circ\widetilde{p}\times_B id_{\mj})^*\widetilde{q}_2^*(a)\cdot \widetilde{q}^*ch^{\tau}((\cp_{n+1})_1)\big\}\cdot[\mc^{[n,n+1]}]
\end{eqnarray} 
where the last equality is because $\widetilde{q}_2\circ(\widetilde{AJ}_{n+1}\circ\widetilde{q}\times_B id_{\mj})=\widetilde{q}_2\circ(\widetilde{AJ}_{n}\circ\widetilde{p}\times_B id_{\mj})$.
Hence
\begin{eqnarray}\label{pqaj}&& p_*q^!(AJ_{n+1}^!(\beta))\nonumber\\
&=&p_*(q_1^{n,n+1})_*\big\{(\widetilde{AJ}_{n}\circ\widetilde{p}\times_B id_{\mj})^*\widetilde{q}_2^*(a)\cdot \widetilde{q}^*ch^{\tau}((\cp_{n+1})_1)\big\}\cdot[\mc^{[n,n+1]}]\nonumber\\
&=&(q_1^{n})_*(p\times id_{J})_*\big\{(\widetilde{AJ}_{n}\circ\widetilde{p}\times_B id_{\mj})^*\widetilde{q}_2^*(a)\cdot \widetilde{q}^*ch^{\tau}((\cp_{n+1})_1)\big\}\cdot[\mc^{[n,n+1]}]\nonumber\\
&=&(q_1^{n})_*\big\{(\widetilde{AJ}_{n}\times_B id_{\mj})^*\widetilde{q}_2^*(a)\cdot(\widetilde{p}\times_B id_{\mj})_*(\widetilde{q}^*ch^{\tau}((\cp_{n+1})_1))\big\}\cdot[\mc^{[n,n+1]}]\nonumber\\
&=&(q_1^{n})_*\big\{(\widetilde{AJ}_{n}\times_B id_{\mj})^*\widetilde{q}_2^*(a)\cdot(\widetilde{p}\times_B id_{\mj})_*(\widetilde{q}^*ch^{\tau}((\cp_{n+1})_1))\cdot[\mc^{[n,n+1]}]\big\}.\nonumber\\
\end{eqnarray} 
By Lemma \ref{ppnc} (2) we have
\[\widetilde{q}^*ch^{\tau}((\cp_{n+1})_1)=(\widetilde{p}\times_B\widetilde{\sigma})^*ch^{\tau}((\cp_n\boxtimes \mf)_1). \]
where $\cp_n\boxtimes \mf=p_{13}^*\cp_{n}\otimes p_{23}^*\mf$ is a sheaf on $\mc^{[n]}\times_B\mc\times_B\mj$ with $p_{13}:\mc^{[n]}\times_B\mc\times_B\mj\ra\mc^{[n]}\times_B\mj$ and $p_{23}:\mc^{[n]}\times_B\mc\times_B\mj\ra\mc\times_B\mj$ two projections.
Therefore
\begin{eqnarray}\label{pbjch}&&(\widetilde{p}\times_B id_{\mj})_*(\widetilde{q}^*ch^{\tau}((\cp_{n+1})_1))\cdot[\mc^{[n,n+1]}]\nonumber\\&=&p_{13,*}(\widetilde{p}\times_B \widetilde{\sigma}\times_B id_{\mj})_*(\widetilde{q}^*ch^{\tau}((\cp_{n+1})_1)\cdot[\mc^{[n,n+1]}])\nonumber\\
&=&p_{13,*}(ch^{\tau}((\cp_n\boxtimes \mf)_1)\cdot[\mc^{[n]}\times_B\mc])
\end{eqnarray}

Recall that there exists $ch_{\mj\times_B\mj}^{\mj\times\mj}(E_{\bullet})\in H^*(\mj\times_B\mj\xrightarrow{\imath_B}\mj\times\mj)$ such that
\[ch^{\tau}(\cp)=ch_{\mj\times_B\mj}^{\mj\times\mj}(E_{\bullet})\cdot[\mj\times\mj].
\]
Let $\mc^{[n]}\times_B\mc\times_B\mj\xrightarrow{\jmath_B^{n,C}}\mc^{[n]}\times\mc\times_B\mj$ be an embedding, and let $\hat{p}_{ij}$ be the projection from $\mc^{[n]}\times\mc\times_B\mj$ to the $i,j$-th factors.  Define $E_{\bullet}^n:=(\widetilde{AJ}_{n}\times id_{\mj})^*E_{\bullet}$.  Then $\hat{p}^*_{13}E_{\bullet}^n$ is a locally free resolution of $(\jmath_B^{n,C})_*(p_{13}^*\cp_n)$ and by the flatness of $\mf$ and $\cp$ over both factors we have 
\begin{eqnarray}\label{pbjch1}ch^{\tau}((\cp_n\boxtimes \mf)_1)&=&ch_{\mc^{[n]}\times_B\mc\times_B\mj}^{\mc^{[n]}\times\mc\times_B\mj}(\hat{p}_{13}^*E^n_{\bullet})\cdot \hat{p}^*_{23}(ch^{\tau}(\mf_1))\nonumber\\
&=&\hat{p}_{13}^*ch_{\mc^{[n]}\times_B\mj}^{\mc^{[n]}\times\mj}(E^n_{\bullet})\cdot \hat{p}^*_{23}(ch^{\tau}(\mf_1)).\end{eqnarray}
 
Combine (\ref{pbjch}) and (\ref{pbjch1}) and we have
\begin{eqnarray}\label{pbjch2}&&(\widetilde{p}\times_B id_{\mj})_*(\widetilde{q}^*ch^{\tau}((\cp_{n+1})_1))\cdot[\mc^{[n,n+1]}]\nonumber\\
&=&p_{13,*}(\hat{p}_{13}^*ch_{\mc^{[n]}\times_B\mj}^{\mc^{[n]}\times\mj}(E^n_{\bullet})\cdot \hat{p}^*_{23}(ch^{\tau}(\mf_1))\cdot[\mc^{[n]}\times_B\mc])\nonumber\\
&=&ch_{\mc^{[n]}\times_B\mj}^{\mc^{[n]}\times\mj}(E^n_{\bullet})\cdot
\hat{p}_{13,*}(\hat{p}_{23}^*(ch^{\tau}(\mf_1)\cdot [\mc]))\nonumber\\
&=&ch_{\mc^{[n]}\times_B\mj}^{\mc^{[n]}\times\mj}(E^n_{\bullet})\cdot
\bar{r}_{2}^*(\widetilde{s}_{2,*}(ch^{\tau}(\mf_1)\cdot [\mc]))\end{eqnarray}
where $\bar{r}_2:\mc^{[n]}\times\mj\ra\mj$ and $\widetilde{s}_2:\mc\times_B\mj\ra\mj$ are projections.

By the relative version of (\ref{cceng}) we see that $\widetilde{s}_{2,*}(ch^{\tau}(\mf_1)\cdot [\mc])$ lifts to cohomology:  we have
\begin{equation}\label{sss22}\widetilde{s}_{2,*}(ch^{\tau}(\mf_1)\cdot [\mc])=ch_{>0}(\me_m)\cap[\mj],\end{equation}
where $\me_m$ is a vector bundle over $\mj$ whose restriction to $J$ is $E_m$ as defined in Remark \ref{AJinv}.   Combine (\ref{pqaj}), (\ref{pbjch2}) and (\ref{sss22}), and we have
\begin{eqnarray}\label{huhu}&&p_*q^!(AJ_{n+1}^!(\beta))\nonumber\\
&=&(q_1^{n})_*\big\{(\widetilde{AJ}_{n}\times_B id_{\mj})^*\widetilde{q}_2^*(a)\cdot(\widetilde{p}\times_B id_{\mj})_*(\widetilde{q}^*ch^{\tau}((\cp_{n+1})_1))\cdot[\mc^{[n,n+1]}]\big\}\nonumber\\
&=&(q_1^n)_*\big\{(\widetilde{AJ}_n\times_B id_{\mj})^*\widetilde{q}_2^*(a)\cdot ch_{\mc^{[n]}\times_B\mj}^{\mc^{[n]}\times\mj}(E^n_{\bullet})\cdot
\bar{r}_{2}^*(\widetilde{s}_{2,*}(ch^{\tau}(\mf_1)\cdot [\mc]))\}
\nonumber\\
&=&(q_1^n)_*\big\{(\widetilde{AJ}_n\times_B id_{\mj})^*\widetilde{q}_2^*(a)\cdot \widetilde{r}_2^*(ch_{>0}(\me_m))\cap ch_{\mc^{[n]}\times_B\mj}^{\mc^{[n]}\times\mj}(E^n_{\bullet})\cdot [\mc^{[n]}\times\mj]))\}\nonumber\\
&=&(q_1^n)_*\big\{(\widetilde{AJ}_n\times_B id_{\mj})^*\widetilde{q}_2^*(a)\cdot \widetilde{r}_2^*(ch_{>0}(\me_n))\cap ch^{\tau}((\cp_n)_1)\cdot[\mc^{[n]}]\}\nonumber\\
&=&(q_1^n)_*\big\{r_2^*(ch_{>0}(E_m))\cap(\widetilde{AJ}_n\times_B id_{\mj})^*\widetilde{q}_2^*(a)\cdot ch^{\tau}((\cp_n)_1)\cdot[\mc^{[n]}]\}\nonumber\\
&=&(q_1^n)_*\big\{(\widetilde{AJ}_n\times_B id_{\mj})^*\widetilde{q}_2^*((ch_{>0}(E_m))\cap a)\cdot ch^{\tau}((\cp_n)_1)\cdot[\mc^{[n]}]\}\nonumber\\
&=&(q_1^n)_*\big\{(\widetilde{AJ}_n\times_B id_{\mj})^*\widetilde{q}_2^*((-\z)\cap a)\cdot ch^{\tau}((\cp_n)_1)\cdot[\mc^{[n]}]\}\nonumber\\
\end{eqnarray} 
where $\widetilde{r}_2:\mc^{[n]}\times_B\mj\ra\mj$, $r_2:C^{[n]}\times J\ra J$ are projections, the third, the fifth and the sixth equalities are because of \cite[Proposition 17.3.2]{Ful} and the fact that $\me_m|_J=E_m$, and the last equality is because of Proposition \ref{chen}. 

To prove $(q_1^n)_*\big\{(\widetilde{AJ}_n\times_B id_{\mj})^*\widetilde{q}_2^*(b)\cdot ch^{\tau}((\cp_n)_1)\cdot[\mc^{[n]}]\}=AJ^!_{n}(\kf(b))=i^*AJ^!_{n+1}(\kf(b))$ is standard hence left to the readers.

By (\ref{muc1}), (\ref{muc2}) and (\ref{huhu}) we have
\[\mu_{-}[C](\omega^k_{n+1}\cap AJ_{n+1}^!(\beta))=\omega_{n}^k\cap AJ_n^!(\kf(-\z\cap\kf^{-1}(\beta)))+k\omega_n^{k-1}\cap AJ_{n}^!(\beta).\]

The proposition is proved.
\end{proof}

\begin{lemma}\label{ppnc}With the same notations as in Remark \ref{compgl}, on $\mc^{[n,n+1]}\times_B\mj$ we have 
\begin{itemize}
\item[(1)] In $H_*(\mc^{[n,n+1]}\times_B\mj)$ we have
\begin{equation}\label{chcpn1}ch^{\tau}((\widetilde{q}\times_B id_{\mj})^*\cp_{n+1})=(\widetilde{q}\times_B id_{\mj})^*(ch^{\tau}(\cp_{n+1})),\end{equation} 
and in $H^*(\mc^{[n,n+1]}\times_B\mj\xrightarrow{\widetilde{q}_1}\mc^{[n,n+1]})$ we have
\begin{equation}\label{chcpn11}ch^{\tau}(((\widetilde{q}\times_B id_{\mj})^*\cp_{n+1})_1)=\widetilde{q}^*ch^{\tau}((\cp_{n+1})_1),\end{equation} 
\item[(2)] There is an element, denoted by $ch^{\tau}((\cp_n\boxtimes \mf)_1)$, in $H^*(\mc^{[n]}\times_B\mc\times_B\mj\xrightarrow{\widetilde{q}_1}\mc^{[n]}\times_B\mc)$ satisfying the following two conditions:
\begin{itemize}
\item[(i)] $\widetilde{q}^*ch^{\tau}((\cp_{n+1})_1)=(\widetilde{p}\times_B\widetilde{\sigma})^*ch^{\tau}((\cp_n\boxtimes \mf)_1).$

\item[(ii)] $ch^{\tau}((\cp_n\boxtimes \mf)_1)\cdot [\mc^{[n]}\times_B\mc]=ch^{\tau}(\cp_n\boxtimes \mf)$
\end{itemize}
\end{itemize}
\end{lemma}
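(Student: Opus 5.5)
Part (1) follows from the fact that $ch^{\tau}$ commutes with pull-back along l.c.i.\ morphisms once the pulled-back sheaf equals its derived pull-back, and this is where I would start. The map $\widetilde{q}\times_B id_{\mj}:\mc^{[n,n+1]}\times_B\mj\ra\mc^{[n+1]}\times_B\mj$ is obtained by base change from $\widetilde{q}:\mc^{[n,n+1]}\ra\mc^{[n+1]}$. This is a morphism between smooth varieties, hence l.c.i., and so the base-changed map is l.c.i.\ with the same canonical orientation pulled back (\cite[Proposition 17.4.1]{Ful}). Because $\cp_{n+1}$ is flat over $\mc^{[n+1]}$, we have $(\widetilde{q}\times_B id_{\mj})^*\cp_{n+1}=L(\widetilde{q}\times_B id_{\mj})^*\cp_{n+1}$. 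Theorem \ref{Ful18.3} (4) then gives (\ref{chcpn1}). For (\ref{chcpn11}) I would transport the statement through the isomorphisms $H^*(X\times_B\mj\xrightarrow{\widetilde{q}_1}X)\cong H^*(X\times_B\mj\ra pt)$ given by composing with $[X]$, for $X$ smooth, exactly as $ch^{\tau}(\cp_j)$ was defined. The key identity needed is $\widetilde{q}^*(c)\cdot[\mc^{[n,n+1]}]=(\widetilde{q}\times_B id_{\mj})^*(c\cdot[\mc^{[n+1]}])$. This holds by ($A_5$) together with $[\mc^{[n,n+1]}]=[\widetilde{q}]\cdot[\mc^{[n+1]}]$ and the compatibility $\widetilde{q}_1^*[\widetilde{q}]=[\widetilde{q}\times_B id_{\mj}]$.

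For part (2) the element is constructed by imitating (\ref{pbjch1}). Take the locally free resolution $E^n_\bullet$ of $\cp_n$ pushed into $\mc^{[n]}\times\mj$, which gives the class $ch^{\mc^{[n]}\times\mj}_{\mc^{[n]}\times_B\mj}(E^n_\bullet)$, and define
\[ch^{\tau}((\cp_n\boxtimes\mf)_1):=\hat p_{13}^*ch^{\mc^{[n]}\times\mj}_{\mc^{[n]}\times_B\mj}(E^n_\bullet)\cdot\hat p_{23}^*ch^{\tau}(\mf_1).\]
Here $ch^{\tau}(\mf_1)\in H^*(\mc\times_B\mj\ra\mc)$ corresponds to $ch^{\tau}(\mf)$ under the smoothness of $\mc$. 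Condition (ii) would follow by multiplying with $[\mc^{[n]}\times_B\mc]$ and applying Theorem \ref{Ful18.3} (3). The hypothesis of that theorem is satisfied because $\hat p_{13}^*E^n_\bullet\otimes p_{23}^*\mf$ has cohomology only in degree $0$, equal to $\cp_n\boxtimes\mf$; this uses flatness of $\mf$ over $\mc$ and of $\cp_n$ over $\mj$, as in Lemma \ref{torf}.

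Condition (i) is the main obstacle. My approach is to first compare both sides after capping with $[\mc^{[n,n+1]}]$, which reduces to the identity in $H_*(\mc^{[n,n+1]}\times_B\mj)$
\[ch^{\tau}((\widetilde q\times_B id)^*\cp_{n+1})=ch^{\tau}((\widetilde p\times_B\widetilde\sigma\times_B id)^*(\cp_n\boxtimes\mf)).\]
The left side is handled by part (1) and the relative isomorphism (\ref{tengl}). For the right side I again use Theorem \ref{Ful18.3} (4): the map $\widetilde p\times_B\widetilde\sigma:\mc^{[n,n+1]}\ra\mc^{[n]}\times_B\mc$ is a morphism from a smooth variety to a variety that is l.c.i.\ over smooth $B$, hence l.c.i. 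I also need Tor-independence, namely that $L(\widetilde p\times_B\widetilde\sigma)^*(\cp_n\boxtimes\mf)$ equals the underived pull-back. This follows from Proposition \ref{comp} (1),(2) and Lemma \ref{torf} applied in the relative setting. Finally, the comparison of the two sides descends to the bivariant groups over $\mc^{[n,n+1]}$ because capping with $[\mc^{[n,n+1]}]$ is an isomorphism, $\mc^{[n,n+1]}$ being smooth.

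Two technical points need care. The first is that $\mc^{[n]}\times_B\mc$ need not be smooth, which is why (ii) is stated with the fundamental class rather than via an isomorphism. I would therefore avoid inverting there and instead pull back along the l.c.i.\ map into the smooth target $\mc^{[n,n+1]}$ before comparing. The second is checking that $(\widetilde p\times_B\widetilde\sigma)^*[\mc^{[n]}\times_B\mc]$ equals the orientation class that yields $[\mc^{[n,n+1]}]$.
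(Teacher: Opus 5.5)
Your part (1) is fine and is the paper's argument. The gap is in (2)(i), at the claim that $\widetilde p\times_B\widetilde\sigma:\mc^{[n,n+1]}\ra\mc^{[n]}\times_B\mc$ is l.c.i.\ because its source is smooth and its target is l.c.i. The paper says explicitly that this map need not be l.c.i.\ when $C$ is singular, and that is why its proof is built differently.

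Your inference is false. Suppose $f:X\ra Y$ is l.c.i., with $X$ smooth and $Y\subset M$ an l.c.i.\ subscheme of a smooth $M$. Then $\mathbb{L}_{X/Y}$ has Tor-amplitude in $[-1,0]$. On the other hand, the triangle $Lf^*\mathbb{L}_Y\ra\Omega_X\ra\mathbb{L}_{X/Y}$ gives $H^{-2}(\mathbb{L}_{X/Y}\otimes^L k(x))\cong\ker\big(N^{\vee}_{Y/M}(y)\ra\Omega_M(y)\big)$ with $y=f(x)$. By the Jacobian criterion this is nonzero whenever $y$ is a singular point of $Y$. So an l.c.i.\ map from a smooth variety must land in $Y^{sm}$.

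Here the map is proper, birational and surjective. Its target $\mc^{[n]}\times_B\mc$ is singular at every $(Z,c)$ where $c$ is a singular point of $C$ occurring in $Z$ with multiplicity one. At such points both differentials to $T_0B$ have image in the same proper subspace; for $n=1$ near a node the fibre product is locally $\{xy=uv\}$ times a smooth factor.

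Consequently Theorem \ref{Ful18.3} (4) is unavailable, and your ``second technical point'' has no orientation class to refer to. Capping with $[\mc^{[n,n+1]}]$ only reduces (i) to the identity $(\widetilde p\times_B\widetilde\sigma)^*\theta\cdot[\mc^{[n,n+1]}]=ch^{\tau}((\widetilde q\times_B id_{\mj})^*\cp_{n+1})$. That identity is exactly the l.c.i.\ Riemann--Roch statement you cannot use.

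Your candidate class also has the wrong type. Take $\hat p_{13},\hat p_{23}$ to be the projections from $\mc^{[n]}\times\mc\times_B\mj$, as in (\ref{pbjch1}). Then $\hat p_{13}^*ch(E^n_\bullet)\cdot\hat p_{23}^*ch^{\tau}(\mf_1)$ lies in $H^*(\mc^{[n]}\times_B\mc\times_B\mj\ra\mc^{[n]}\times\mc)$, not in the group over $\mc^{[n]}\times_B\mc$ that the statement requires, so (ii) cannot even be written down. Moreover, $ch(E^n_\bullet)$ lives in degrees $\geq 2\dim B$. So even the pullback of your class along $\mc^{[n,n+1]}\ra\mc^{[n]}\times\mc$ starts $2\dim B$ above the leading term of $\widetilde q^*ch^{\tau}((\cp_{n+1})_1)$, which is the fundamental class of $\mc^{[n,n+1]}\times_B\mj$. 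Hence (i) cannot hold for it.

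The paper's construction, which is the idea missing from your proposal, runs as follows:
\begin{itemize}
\item It works on $\mc^{[n]}\times_B\mc\times\mj$, taking only the $\mj$-factor absolutely.
\item It resolves $\mf$ on $\mc\times\mj$ by $N_\bullet$ and forms the total complex $L_\bullet$ of $\bar p_{13}^*E^n_\bullet\otimes\bar p_{23}^*N_\bullet$.
\item It defines the class as $ch(L_\bullet)\cdot[\bar q_1]$, where $[\bar q_1]$ is the flat orientation of the projection to $\mc^{[n]}\times_B\mc$.
\item It derives (i) from the fact that localized Chern characters are compatible with pullback along arbitrary morphisms (\cite[Theorem 18.1]{Ful}), which needs no l.c.i.\ hypothesis.
\end{itemize}

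If you adopt this route, check the resolution step carefully. Both factors of $L_\bullet$ are exact off the same codimension-$\dim B$ locus $\mc^{[n]}\times_B\mc\times_B\mj$. Over that ambient one therefore gets $\mathcal{H}_i(L_\bullet)\cong(\cp_n\boxtimes\mf)\otimes\Lambda^iT_B^{\vee}$ (pulled back from $B$) for $0\le i\le\dim B$. Proposition \ref{comp} (2) controls Tor over the fibre product, not over the ambient. This excess must be removed before $ch(L_\bullet)$ can be identified with $ch^{\tau}$.
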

\begin{proof}(\ref{chcpn1}) is because of Theorem \ref{Ful18.3} (4) and the fact that $q:\mc^{[n,n+1]}\ra \mc^{[n+1]}$ is l.c.i., and (\ref{chcpn11}) follows directly.
We only need to show (2).

By (\ref{tengl}) we have 
\[(\widetilde{q}\times_B id_{\mj})^*\cp_{n+1}\cong (p\times_B id_{\mj})^*\cp_{n}\otimes (\sigma\times_B id_{\mj})^*\mf\]
Also easy to see that 
\[(p\times_B id_{\mj})^*\cp_{n}\otimes (\sigma\times_B id_{\mj})^*\mf\cong (p\times_B\sigma\times_B id_{\mj})^*(\cp_n\boxtimes \mf).\]
But unfortunately the map $p\times_B\sigma:\mc^{[n,n+1]}\ra\mc^{[n]}\times_B\mc$ might not be l.c.i. if $C$ is not smooth,  therefore Theorem \ref{Ful18.3} (4) does not apply directly.

Recall that there exists $ch_{\mj\times_B\mj}^{\mj\times\mj}(E_{\bullet})\in H^*(\mj\times_B\mj\xrightarrow{\imath_B}\mj\times\mj)$ such that
\[ch^{\tau}(\cp)=ch_{\mj\times_B\mj}^{\mj\times\mj}(E_{\bullet})\cdot[\mj\times\mj],\]
and 
\[ch^{\tau}(\cp_1)=ch_{\mj\times_B\mj}^{\mj\times\mj}(E_{\bullet})\cdot[\bar{q}_1],\]
with $\bar{q}_1$ defined in (\ref{inpf3}).

Since $\cp$ is flat over both factors, $(\widetilde{AJ}_{n}\times id_{\mj})^*E_{\bullet}$ is a locally free resolution of $(\imath_B^n)_*\cp_n$ and we have 
\[(\widetilde{AJ}_{n}\times id_{\mj})^*ch_{\mj\times_B\mj}^{\mj\times\mj}(E_{\bullet})=ch_{\mc^{[n]}\times_B\mj}^{\mc^{[n]}\times\mj}((\widetilde{AJ}_{n}\times id_{\mj})^*E_{\bullet})\in H^*(\mc^{[n]}\times_B\mj\xrightarrow{\imath_B^n}\mc^{[n]}\times\mj),\]
and 
\footnotesize
\[(\widetilde{p}\circ\widetilde{AJ}_{n}\times id_{\mj})^*ch_{\mj\times_B\mj}^{\mj\times\mj}(E_{\bullet})=ch_{\mc^{[n,n+1]}\times_B\mj}^{\mc^{[n,n+1]}\times\mj}((\widetilde{p}\circ\widetilde{AJ}_{n}\times id_{\mj})^*E_{\bullet})\in H^*(\mc^{[n,n+1]}\times_B\mj\xrightarrow{\imath_B^{n,n+1}}\mc^{[n,n+1]}\times\mj),\]
\normalsize
and moreover in $H^*(\mc^{[n]}\times_B\mc\times_B\mj\xrightarrow{\imath_B^{n,C}}\mc^{[n]}\times_B\mc\times\mj)$ we have
\[\bar{p}_{13}^*(\widetilde{AJ}_{n}\times id_{\mj})^*ch_{\mj\times_B\mj}^{\mj\times\mj}(E_{\bullet})=ch_{\mc^{[n]}\times_B\mc\times_B\mj}^{\mc^{[n]}\times_B\mc\times\mj}(\bar{p}_{13}^*(\widetilde{AJ}_{n}\times id_{\mj})^*E_{\bullet}),\]
where $\bar{p}_{13}:\mc^{[n]}\times_B \mc\times\mj\ra\mc^{[n]}\times \mj$ is the projection.

Similarly for $\mf$ there exists $ch_{\mc\times_B\mj}^{\mc\times\mj}(N_{\bullet})\in H^*(\mc\times_B\mj\xrightarrow{\imath_B^C}\mc\times\mj)$ such that
\[ch^{\tau}(\mf)=ch_{\mc\times_B\mj}^{\mc\times\mj}(N_{\bullet})\cdot[\mc\times\mj].
\]
Since $\mf,\cp$ are flat over both factors, the double complex $\bar{p}^*_{13}(\widetilde{AJ}_{n}\times id_{\mj})^*E_{\bullet}\otimes \bar{p}_{23}^*N_{\bullet}$ is a resolution of $(\imath_B^{n,C})_*(\cp_{n}\boxtimes\mf)$ on $\mc^{[n]}\times_B \mc\times\mj$.
 On the other hand by Proposition \ref{comp} (2), the double complex $(\widetilde{p}\circ\widetilde{AJ}_{n}\times id_{\mj})^*E_{\bullet}\otimes (\widetilde{\sigma}\times id_{\mj})^*N_{\bullet}$ is a resolution of $(\imath_B^{n,n+1})_*\{(\widetilde{p}\times_B id_{\mj})^*\cp_{n}\otimes (\widetilde{\sigma}\times_B id_{\mj})^*\mf\}=(\imath_B^{n,n+1})_*((\widetilde{q}\times_B id_{\mj})^*\cp_{n+1})$ on $\mc^{[n,n+1]}\times\mj.$

Let $L_{\bullet}$ be the simple complex associated to the double complex $\bar{p}^*_{13}(\widetilde{AJ}_{n}\times id_{\mj})^*E_{\bullet}\otimes \bar{p}_{23}^*N_{\bullet}$.  Then $(\widetilde{p}\times_B\widetilde{\sigma}\times id_{\mj})^*L_{\bullet}$ is the simple complex associated to $(\widetilde{p}\circ\widetilde{AJ}_{n}\times id_{\mj})^*E_{\bullet}\otimes (\widetilde{\sigma}\times id_{\mj})^*N_{\bullet}$ and by \cite[Theorem 18.1]{Ful} we have
\[(\widetilde{p}\times_B\widetilde{\sigma}\times id_{\mj})^*ch^{\mc^{[n]}\times_B\mc\times\mj}_{\mc^{[n]}\times_B\mc\times_B\mj}(L_{\bullet})=ch^{\mc^{[n,n+1]}\times\mj}_{\mc^{[n,n+1]}\times_B\mj}((\widetilde{p}\times_B\widetilde{\sigma}\times id_{\mj})^*L_{\bullet}).\]

By abuse of notations, we denote $\bar{q}_1:\bigstar\times\mj\ra\bigstar$ the projection to the first factor for $\bigstar=\mj,\mc^{[n+1]},\mc^{[n,n+1]},\mc^{[n]}\times_B\mc$.  Since $B$ has trivial tangent bundle 
we have 
\begin{eqnarray}\widetilde{q}^*ch^{\tau}((\cp_{n+1})_1)&=&ch^{\tau}(((\widetilde{q}\times_B id_{\mj})^*\cp_{n+1})_1)\nonumber\\&=&ch^{\tau}(((\widetilde{p}\times_B id_{\mj})^*\cp_{n}\otimes (\widetilde{\sigma}\times_B id_{\mj})^*\mf)_1)\nonumber\\&=&ch^{\mc^{[n,n+1]}\times\mj}_{\mc^{[n,n+1]}\times_B\mj}((\widetilde{p}\times_B\widetilde{\sigma}\times id_{\mj})^*L_{\bullet})\cdot[\bar{q}_1]\nonumber\\ &=&(\widetilde{p}\times_B\widetilde{\sigma}\times id_{\mj})^*ch^{\mc^{[n]}\times_B\mc\times\mj}_{\mc^{[n]}\times_B\mc\times_B\mj}(L_{\bullet})\cdot [\bar{q}_1]\nonumber\\
&=&(\widetilde{p}\times_B\widetilde{\sigma})^*\big\{ch^{\mc^{[n]}\times_B\mc\times\mj}_{\mc^{[n]}\times_B\mc\times_B\mj}(L_{\bullet})\cdot [\bar{q}_1]\big\}\nonumber
\end{eqnarray}
We define $ch^{\tau}((\cp_n\boxtimes \mf)_1):=ch^{\mc^{[n]}\times_B\mc\times\mj}_{\mc^{[n]}\times\mc\times\mj}(L_{\bullet})\cdot [\bar{q}_1]$, then $ch^{\tau}((\cp_n\boxtimes \mf)_1)$ satisfies (i) and (ii).  The lemma is proved
\end{proof}
\begin{rem}Since $\mc^{[n]}\times_B\mc$ might not be smooth, the map 
\[H^*(\mc^{[n]}\times_B\mc\times_B\mj\ra \mc^{[n]}\times_B\mc)\xrightarrow{\cdot[\mc^{[n]}\times_B\mc]}H^*(\mc^{[n]}\times_B\mc\times_B\mj\ra pt)\]
might not be an isomorphism.  Hence we don't know whether the choice of $ch^{\tau}((\cp_n\boxtimes \mf)_1)$ is unique.
\end{rem}

\appendix  
\section{A review on some homology theory.}\label{appB}
\subsection{Bivariant theory.}\label{appBT} We give a brief review on bivariant theory in this section.  One can find more details in \cite[\S I.3]{FM} and \cite[Ch 17]{Ful}.

Let $f:X\ra Y$, $g:Y'\ra Y$ be morphisms of reasonable topological spaces, forming the following Cartesian diagram
\[\xymatrix{X'\ar[r]^{f'}\ar[d]_{g'}&Y'\ar[d]^g\\X\ar[r]_f&Y}.\]

A \emph{bivariant class} $c$ in $H^i(X\xrightarrow{f}Y)$ is a collection of homomorphisms
\[c_g:H_{*}^{BM}(Y')\ra H_{*-i}^{BM}(X').\]
We also write $c\cap\alpha$ instead of $c_g(\alpha)$.  

There are three basic operations on the bivariant groups as follows.
\begin{itemize}
\item[$\bullet$]\emph{Product.} For all morphisms $f:X\ra Y$, $g:Y\ra Z$, and integers $p,q$, there is a homomorphism
\[H^p(X\xrightarrow{f}Y)\otimes H^q(Y\xrightarrow{g}Z)\xrightarrow{\cdot}H^{p+q}(X\xrightarrow{g\circ f}Z),\]
where the image of $c\otimes d$ is denoted by $c\cdot d$.  Given $Z'\ra Z$ form the fiber diagram
\begin{equation}\label{bhdia}\xymatrix{X'\ar[r]^{f'}\ar[d]_{g'}&Y'\ar[d]^g\ar[r]^{g'}&Z'\ar[d]\\X\ar[r]_f&Y\ar[r]_g&Z}.    \end{equation}
Then for $\alpha\in H_*(Z')$, we have $c\cdot d(\alpha)=c(d(\alpha)).$
\item[$\bullet$]\emph{Push-forward.} If $f:X\ra Y$ is a proper morphism, $g:Y\ra Z$ and morphism, and $p$ an integer, there is a homomorphism
\[f_*:H^p(X\xrightarrow{g\circ f}Z)\ra H^p(Y\xrightarrow{g}Z).\]
In the diagram (\ref{bhdia}), we have for any $c\in H^*(X\xrightarrow{g\circ f}Z),\alpha\in H_*(Z')$ 
\[f_*(c)(\alpha)=f'_*(c(\alpha))\]
\item[$\bullet$]\emph{Pull-back.} Given $f:X\ra Y$, $g:Y_1\ra Y$ form the Cartesian diagram
\[\xymatrix{X_1\ar[r]^{f_1}\ar[d]_{g_1}&Y_1\ar[d]^g\\X\ar[r]_f&Y}.\]
For each $p$ there is a homomorphism
\[g^*:H^p(X\xrightarrow{f}Y)\ra H^p(X_1\xrightarrow{f_1}Y_1).\]
Choose any $c\in H^*(X\xrightarrow{f}Y),Y'\ra Y_1$ and $\alpha\in H_*(Y')$.  Let $X'=X\times_Y Y'=X_1\times_{Y_1}Y'$, we have
\[g^*(c)(\alpha)=c(\alpha).\]
\end{itemize}
Those operations satisfy seven properties as follows.
\begin{itemize}
\item[$A_1$]\emph{Associativity of products.} For any $c\in H^*(X\ra Y),d\in H^*(Y\ra Z),e\in H^*(Z\ra W)$ we have
\[(c\cdot d)\cdot e=c\cdot (d\cdot e)\in H^*(X\ra W).\]

\item[$A_2$]\emph{Functoriality of push-forwards.} Let $f:X\ra Y$ and $g:Y\ra Z$ be proper.  For any $Z\ra W$ and $c\in H^*(X\ra W)$ we have
\[(g\circ f)_*(c)=g_*(f_*(c))\in H^*(Z\ra W).\]

\item[$A_3$]\emph{Functoriality of pull-backs.} For any $c\in H^*(X\ra Y)$, $g:Y_1\ra Y$, $h:Y_2\ra Y_1$, we have 
\[(g\circ h)^*(c)=h^*g^*(c)\in H^*(X_2\ra Y_2).\]
where $X_2=X\times_Y Y_2.$

\item[$A_4$]\emph{Product and push-forward commute.} Let $f:X\ra Y$ be proper.  For any $Y\ra Z$, $Z\ra W$ and $c\in H^*(X\ra Z),d\in H^*(Z\ra W)$, we have
\[f_*(c)\cdot d=f_*(c\cdot d)\in H^*(Y\ra W).\]

\item[$A_5$]\emph{Product and pull-back commute.} We have the following Cartesian diagram
\begin{equation}\label{ppbdia}\xymatrix{X'\ar[r]^{f'}\ar[d]&Y'\ar[d]^{g'}\ar[r]&Z'\ar[d]^{g}\\X\ar[r]_f&Y\ar[r]&Z}.    \end{equation}
Then for any $c\in H^*(X\ra Y),d\in H^*(Y\ra Z)$
\[g^*(c\cdot d)=g'^*(c)\cdot g^*(d)\in H^*(X'\ra Z').\]

\item[$A_6$]\emph{Push-forward and pull-back commute.}  In the diagram (\ref{ppbdia}), if $f:X\ra Y$ is proper, then for any $c\in H^*(X\ra Z)$, we have
\[g^*f_*(c)=f'_*(g^*(c))\in H_*(Y'\ra Z').\]

\item[$A_7$]\emph{Projection formula.}  Given a morphism $h:Y\ra Z$ and a Cartesian diagram
 \[\xymatrix{X'\ar[r]^{f'}\ar[d]_{g'}&Y'\ar[d]^g\\X\ar[r]_f&Y},\]
with $g$ proper, we have for $c\in H^*(X\xrightarrow{f} Y),d\in H^*(Y'\xrightarrow{h\circ g}Z)$
\[c\cdot g_*(d)=g'_*(g^*(c)\cdot d)\in H^*(X\xrightarrow{h\circ f}Z).\] 
\end{itemize}
\begin{rem}\label{defbiv}One way to define the bivariant group is to use the relative cohomology.  Let $f:X\ra Y$ be any map.  Assume there is a closed embedding $\imath:X\hookrightarrow M$ with $M$ an oriented manifold.  Then we have a closed embedding $X\stackrel{(\imath,f)}{\hookrightarrow}M\times Y$, and we have
\[H^*(X\xrightarrow{f}Y):=H^*(Y\times M,Y\times M\setminus X).\]
\end{rem}
\begin{rem}\label{relhom}We would like to mention that $H^i(X\ra pt)$ and $H^i(X\xrightarrow{id} X)$ can be naturally identified with $H_{-i}^{BM}(X)$ and $H^i(X)$ for any space $X$.  Moreover, given $f:X\ra Y$ with $Y$ a nonsingular variety of dimension $d$, by \cite[Proposition 17.4.2, Example 17.4.3]{Ful} the product with the fundamental class $[Y]\in H^{-2d}(Y\ra pt)$ gives an isomorphism
\[H^i(X\xrightarrow{f}Y)\xrightarrow[\cong]{\cdot [Y]}H^{i-2d}(X\ra pt)=H_{2d-i}^{BM}(X).\]
\end{rem}
\begin{rem}[Orientations]\label{orien}For some maps $f:X\ra Y$ there are naturally determined elements in $H^*(X\xrightarrow{f} Y)$ called \emph{canonical orientations} and denoted $[f]$.  Orientations don't always exist in general but we have the following three special cases.
\begin{itemize}
\item[(1)] If $f:X\ra Y$ is flat of relative dimension $n$, then $[f]\in H^{-2n}(X\xrightarrow{f} Y)$ and for any $g:Y'\ra Y$, $a\in H^{BM}_*(Y')$, we have $$[f](a)=f^{'*}(a),$$
where $f':X'=X\times_Y Y'\ra Y'$ and $f^{'*}$ is the flat pullback.
\item[(2)] If $f:X\ra Y$ is a regular embedding of codimension $d$, then $[f]\in H^{2d}(X\xrightarrow{f} Y)$ and for any $g:Y'\ra Y$, $a\in H^{BM}_*(Y')$, we have $$[f](a)=f^{!}(a),$$
where $f^{!}$ is the refined Gysin homomorphism.  If $f':X'=X\times_Y Y'\ra Y'$ is also a regular embedding, then $f^{!}=f^{'*}$.
\item[(3)] If $f:X\ra Y$ is a l.c.i morphism with $f=p\circ i$ such that $p$ is smooth and $i$ is a regular embedding, then $$[f]:=[i]\cdot [p].$$
\end{itemize}
\end{rem}

\subsection{Riemann-Roch for singular varieties.}\label{appRR} In this subsection we recall very briefly the Riemann-Roch theory for singular varieties.  One can see more details in (\cite[Ch.18]{Ful}).   

For $X$ any algebraic scheme (locally of finite type and separated) and let $K_0(X)$ be the Grothendieck group of coherent sheaves on $X$.  We have a group homomorphism 
\[\tau=\tau_X:K_0(X)\ra A_*(X),\]
where for any $\alpha\in K_0(X)$, $\tau(\alpha)\in A_*(X)$ generalizes $ch(\alpha)td(T_X)$ for $X$ smooth. We have the \emph{Todd class} of $X$ defined by $Td(X):=\tau(\mo_X)\in A_*(X)$. We also denote $\tau(\alpha)$ ($ch(\beta)$, resp.) its image in $H_*(X)$ ($H^*(X)$, resp.) via the cycle map.    

In general, $\tau(\alpha)$ only lies in $H_*(X)$, but for some special cases $\tau(\alpha)$ lies in the image of cap product of $H^*(X)$ with $Td(X)$.  For instance when $\beta\in K^0(X)$, we have $$\tau(\beta)=ch(\beta)\cap Td(X),$$
where $K^0(X)$ is the subgroup of $K_0(X)$ generated by vector bundles and $ch(\beta)\in H^*(X)$ is the exponential Chern character of $\beta$.  

Moreover, $Td(X)$ can sometimes be lifted to $H^*(X)$, in other words, $\exists~ td(X)\in H^*(X)$ such that $Td(X)=td(X)\cap[X]$.  For instance let $X$ be a locally complete intersection, then we have $td(X)=td(T_X)$ with $T_X$ the virtual tangent bundle of $X$, and in this case we can define 
\begin{equation}\label{defch}ch^{\tau}(G):=td(-T_{X})\cap\tau(\alpha)\in H_*(X)\end{equation}
for any $\alpha\in K_0(X)$.

We rephrase \cite[Theorem 18.3, Example 18.3.11 and Example 18.3.12]{Ful} as the following theorem.
\begin{thm}\label{Ful18.3}The homomorphism $\tau$ has the following five properties.
\begin{itemize}
\item[(1)](Covariance). If $f:X\ra Y$ is proper, $\alpha\in K_0(X)$, then $f_*\tau_X(\alpha)=\tau_Y(Rf_*(\alpha))$.
\item[(2)](Module). If $\alpha\in K_0(X),\beta\in K^0(X)$, then $$\tau(\alpha\otimes\beta)=ch(\beta)\cap\tau(\alpha).$$
\item[(3)] Let $i:X\ra M$ be a closed embedding and let $G$ be a coherent sheaf on $X$.  Assume there is a locally free resolution $E_{\bullet}$ of finite length of $i_* G$, then
\[\tau(G)=ch^M_X(E_{\bullet})\cap Td(M),\]
where $ch_X^M(E_{\bullet})$ is a bivariant class in $H^*(X\xrightarrow{i} M)$.  In particular if $M$ is smooth, then
\[\tau(G)=ch^M_X(E_{\bullet})\cap Td(M)=td(i^*T_M)\cap(ch^M_X(E_{\bullet})\cap[M]).\]

Moreover, for any coherent sheaf $\mg$ on $M$, $\mh^i(E_{\bullet}\otimes \mg)$ are sheaves on $X$ and we have
\[\sum_{i}(-1)^i\tau_X(\mh^i(E_{\bullet}\otimes \mg))=ch_X^M(E_{\bullet})\cap \tau_M(\mg).\] 

\item[(4)] Let $f:X\ra Y$ be a l.c.i. morphism such that both $X,Y$ admit closed embeddings to smooth schemes.  Then for all $\alpha\in K_0(Y)$
\[\tau_X(Lf^*\alpha)=td(T_f)\cap f^*\tau_Y(\alpha).\]
If moreover both $X,Y$ are locally complete intersection, we have
\[ch^{\tau_X}(Lf^*\alpha)=f^*ch^{\tau_Y}(\alpha).\]
\item[(5)](Top term). If $Z$ is a closed subvariety of $X$, with $\dim Z=n$. then 
\[\tau(\mo_Z)=[V]+\text{terms of dimension }<n.\]

Moreover let $G$ be a coherent sheaf on $X$ with $\dim Supp(G)\leq n$.  Define the $n$-cycle of $G$, 
\[Z_n(G)=\sum_{\dim V=n}l_V(G)[V]\]
where the sum is over all $n$-dimensional subvarieties $V$ in the support of $G$, and $l_V(G)$ is the length of the stalk of $G$ over the local ring of $X$ at $V$.  Then
\[\tau(G)=Z_n(G)+\text{terms of dimension }<n.\]
\end{itemize}
\end{thm}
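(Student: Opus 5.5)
The plan is not to reprove Riemann--Roch from scratch. I will match each item with the corresponding statement in \cite[Ch.~18]{Ful}, then pass from Chow groups to Borel--Moore homology using the cycle map. The cycle map is compatible with proper push-forward, with Chern classes of vector bundles, with localized Chern characters and with refined Gysin maps for l.c.i.\ morphisms (\cite[Ch.~19]{Ful}). So every identity proved in $A_*$ yields the stated identity in $H_*$.

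Items (1), (2) and (5) are then immediate. (1) is covariance, \cite[Theorem 18.3 (1)]{Ful}; since $\tau$ is defined on $K_0(X)$ and $f_*$ on $K_0$ means $\sum(-1)^iR^if_*$, this is exactly $Rf_*$. (2) is the module property, \cite[Theorem 18.3 (2)]{Ful}. (5) is \cite[Theorem 18.3 (5)]{Ful} for $\mo_Z$. The version for a sheaf $G$ follows by induction on a filtration of $G$ with quotients $\mo_V$-modules, using additivity of $\tau$; this is \cite[Example 18.3.11]{Ful}.

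For (3), I will recall that Fulton \emph{defines} $\tau_X(G)$ as $ch^M_X(E_\bullet)\cap Td(M)$ and proves it is independent of the embedding and of the resolution. When $M$ is smooth, $Td(M)=td(T_M)\cap[M]$, and bivariant classes commute with capping by cohomology classes pulled back from $M$ (\cite[Proposition 17.3.2]{Ful}); this gives the second formula. The Tor formula is the standard property of localized Chern characters, \cite[Example 18.3.12]{Ful}. Concretely, $E_\bullet\otimes\mg$ is a complex on $M$ that is exact off $X$, and $ch^M_X(E_\bullet)\cap\tau_M(\mg)$ computes $\tau$ of its homology sheaves. I will check this first for $\mg$ locally free, where it is just the definition combined with (2). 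For general $\mg$ I will replace $\mg$ by a locally free resolution $F_\bullet$ on $M$ and apply the locally free case to the total complex of $E_\bullet\otimes F_\bullet$. The two spectral sequences then give the same alternating sum in $K_0(X)$.

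For (4), the first formula is \cite[Theorem 18.3 (4)]{Ful}, stated there for $f^*$ on $K^\circ$ and extended to $Lf^*=\sum(-1)^i\Tor^i$ on $K_0(Y)$ via \cite[Example 18.3.16]{Ful}. This extension to non-flat $f$ and to arbitrary coherent sheaves is the one point needing care, so I will work through the factorization $f=p\circ i$. For $p$ smooth the statement is flat pull-back. For $i$ a regular embedding I use (3) with a Koszul-type resolution, and the Tor sheaves are exactly what (3) computes. For the $ch^{\tau}$ form, when $X$ and $Y$ are l.c.i.\ the virtual tangent bundles satisfy $T_X=T_f+f^*T_Y$ in $K^0(X)$. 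Hence $td(-T_X)=td(-T_f)\,td(-f^*T_Y)$. Since the Gysin map $f^*$ commutes with capping by $td(-T_Y)$, pulled back as $td(-f^*T_Y)$, multiplying the first formula by $td(-T_X)$ gives
\[ch^{\tau_X}(Lf^*\alpha)=td(-f^*T_Y)\cap f^*\tau_Y(\alpha)=f^*\big(td(-T_Y)\cap\tau_Y(\alpha)\big)=f^*ch^{\tau_Y}(\alpha).\]
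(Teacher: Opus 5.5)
At the level of citations your route is the paper's own. The paper gives no argument beyond calling the theorem a rephrasing of \cite[Theorem 18.3, Example 18.3.11 and Example 18.3.12]{Ful}, with $\tau$ read in homology via the cycle map. Your derivation of the $ch^{\tau}$ form of (4), from $T_X=T_f+f^*T_Y$ and the commutation of Gysin maps with Chern classes, is correct.

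The problem is the self-contained check you add for the Tor formula in (3). It works for smooth quasi-projective $M$, but it fails once $M$ is singular, and both the statement and the paper's use of it allow singular $M$. There are two issues.
\begin{itemize}
\item[(a)] $\tau_X(G)=ch^M_X(E_\bullet)\cap Td(M)$ is Fulton's definition only when $M$ is nonsingular. For singular $M$ it is the case $\mathcal{G}=\mathcal{O}_M$ of the Tor formula. So your claim that the locally free case is ``just the definition combined with (2)'' is circular.
\item[(b)] Your reduction of a general $\mathcal{G}$ to the locally free case needs a finite locally free resolution of $\mathcal{G}$ on $M$, which need not exist. On a Cohen--Macaulay $M$, a maximal Cohen--Macaulay sheaf that is not locally free has infinite projective dimension, by Auslander--Buchsbaum.
\end{itemize}
Case (b) is exactly the situation of (\ref{chett}) in the proof of Proposition \ref{inverse}. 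There the ambient space $\mathcal{J}\times_B\mathcal{J}\times\mathcal{J}$ is only l.c.i., and $\mathcal{G}=\bar{q}_{12}^*\mathcal{P}^{-1}$ is a shifted maximal Cohen--Macaulay sheaf that is not locally free. For singular $M$ you have to rely on the cited result itself, not on this sketch.

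Your sketch of (4) also leaves out the real content. For $p$ smooth the formula is not just flat pull-back: taking $Y$ a point, it is the normalization $\tau_P(\mathcal{O}_P)=td(T_P)\cap[P]$. For a regular embedding $i:X\to Y$, a Koszul resolution exists only locally. What (3) gives is $\tau_X(Li^*\alpha)=ch^Y_X(E_\bullet)\cap\tau_Y(\alpha)$. To finish you still need the identity $ch^Y_X(E_\bullet)\cap\beta=td(N_XY)^{-1}\cap i^!\beta$ for a global finite resolution $E_\bullet$ of $i_*\mathcal{O}_X$. That identity is the heart of Riemann--Roch for l.c.i.\ morphisms, and your sketch skips it. Since you also cite \cite[Theorem 18.3]{Ful} for (4), this does not sink the proposal, but the sketch should not be offered as a proof.
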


\section{Proof of Proposition \ref{chen}}\label{appA}
\begin{proof}[Proof of Proposition \ref{chen}]We have the closed embedding $J\hookrightarrow \mj$ with $\mj$ smooth.  By definition of $E_n$ we can see that $E_n$ is the restriction of a vector bundle $\me_n$ on $\mj$ to $J$. Therefore $ch_i(E_n)$ are pullbacks of cohomology classes on $\mj$ and hence either $ch_i(E_n)\not\in W_{< 2i}H^{2i}(J) $ or $ch_i(E_n)=0$ by mixed Hodge theory (e.g. \cite[Theorem 5.33 (iii)]{PeSt}).  

Let $f:\widetilde{J}\ra J$ be the resolution of singularities.  Then by \cite[Corollary 5.42]{PeSt}, in order to show $ch_i(E_n)=0$ for all $i\geq 2$, it suffices to show that $f^*(ch_i(E_n))=0$ for all $i\geq 2$.  

We have the diagram
\[\xymatrix@C=1.5cm{&C\times \widetilde{J}\ar[ld]_{p_1}\ar[d]_{id_C\times f}\ar[rd]^{\widetilde{p}_2}&\\
C & C\times J &\widetilde{J}. }\]
Since $F$ is flat over both factors by \cite[Theorem A(2), Lemma 6.1 (3)]{Ari2}, $(id_C\times f)^*F$ is also the derived pullback.  By the universal property of the $E_n$, we have $f^*(ch_i(E_n))=ch_i(f^*(E_n))=ch_i(\widetilde{p}_{2,*}(p_1^*\mo_C(nx)\otimes (id_C\times f)^*F))$.

Since $C$ and $J$ are all locally complete intersections, $ch^{\tau}(G)$ is well-defined for any coherent sheaf $G$ on $C,J$ or $C\times J$.  By the covariance of $\tau$ (Theorem \ref{Ful18.3} (1)), we have 
\begin{equation}\label{rrs}ch(E_n)\cap [J]=ch^{\tau}(E_n)=p_{2,*}(p_1^*(ch(\mo_C(nx))\cup td(T_C))\cap ch^{\tau}(F)).\end{equation}

By a direct computation we have 
\begin{equation}\label{xcc}ch(\mo_C(nx))\cup td(T_C)=1+(n-g+1)x\in H^*(C).\end{equation}  Notice that here we take the class in $H^*(C)$ rather than $A^*(C)$.  On the other hand, since $F$ is normalized at $x$, we have
\begin{eqnarray}\label{xpf}p_{2,*}(p_1^*(x)\cap ch^{\tau}(F))&=&p_{2,*}(p_1^*(ch(\mo_x)\cup td(T_C))\cap ch^{\tau}(F))\nonumber\\&=&ch(F_x)\cap [J]=ch(\mo_J)\cap[J]=[J].\end{eqnarray}
Combine (\ref{rrs}), (\ref{xcc}) and (\ref{xpf}) and we have
\begin{equation}\label{cceng}ch(E_n)\cap [J]=ch^{\tau}(E_{2g-1})+(n-2g+1)[J]=p_{2,*}(ch^{\tau}(F))+(n-g+1)[J].\end{equation}

Analogously we have 
\begin{equation}\label{rrs1}ch(f^*E_n)\cap[\widetilde{J}]=\widetilde{p}_{2,*}(ch^{\tau}((id_C\times f)^*F))+(n-g+1)[\widetilde{J}].\end{equation}
Since $\widetilde{J}$ is smooth, it suffices to show that the degree $2(g-i)$ terms of $\widetilde{p}_{2,*}(ch^{\tau}((id_C\times f)^*F))\in H_*(\widetilde{J})$ are zero for all $i\geq 2$.  We will prove this statement in three steps.

\emph{Step 1.} Let $h:\widetilde{C}\ra C$ be the normalization of $C$.  We have the following exact sequence
\[0\ra \mo_C\ra h_*\mo_{\widetilde{C}}\ra T\ra0,\]
where $T$ is a zero-dimensional sheaf supported at the singular locus of $C$.

Since $F$ is flat over both factors, $(id_C\times f)^*F$ is flat over $C$ and on $C\times \widetilde{J}$ we have the following exact sequence 
\begin{equation}\label{xcc1}0\ra (id_C\times f)^*F\ra (h\times id_{\widetilde{J}})_*(\widetilde{F})\ra (id_C\times f)^*F\otimes p_1^*T\ra 0,\end{equation}
where $\widetilde{F}:=(h\times f)^*F=(h\times id_{\widetilde{J}})^*(id_C\times f)^*F$.

We have the diagram
\[\xymatrix@C=1.5cm{&\widetilde{C}\times \widetilde{J}\ar[ld]_{\widetilde{p}_1}\ar[d]_{h\times id_{\widetilde{J}}}\ar[rd]^{\widetilde{\widetilde{p}}_2}&\\
\widetilde{C}\ar[d]_h & C\times \widetilde{J}\ar[ld]_{p_1}\ar[d]_{id_C\times f}\ar[r]_{\widetilde{p}_2} &\widetilde{J}\\C& C\times J &. }\]

By (\ref{xcc1}) we have 
\begin{equation}\label{xcc2}\widetilde{p}_{2,*}(ch^{\tau}((id_C\times f)^*F))=\widetilde{p}_{2,*}(ch^{\tau}((h\times id_{\widetilde{J}})_*(\widetilde{F})))-\widetilde{p}_{2,*}(ch^{\tau}((id_C\times f)^*F\otimes p_1^*T)).
\end{equation}

\emph{Step 2.} We claim that 
\begin{equation}\label{tts}\widetilde{p}_{2,*}(ch^{\tau}((id_C\times f)^*F\otimes p_1^*T))=\ell[\widetilde{J}]\end{equation} with $\ell:=g-g_{\widetilde{C}}$ the length of $T$.  Since $T$ is zero-dimensional, we have 
$$[T]=[\mo_{x_1}]+\cdots+[\mo_{x_{\ell}}]\in K_0(C).$$
Therefore it suffices to show $\widetilde{p}_{2,*}(ch^{\tau}((id_C\times f)^*F\otimes p_1^*\mo_{y}))=[\widetilde{J}]$ for any $y\in C$.  Since $(id_C\times f)^*F\otimes p_1^*\mo_{y}$ is supported at $\widetilde{J}\times {y}$, by the top term property of $\tau$ (Theorem \ref{Ful18.3} (5)) we have 
\[p_1^*x\cap ch^{\tau}((id_C\times f)^*F\otimes p_1^*\mo_{y})=0,\]
and hence \[p_1^*(td(T_C))\cap ch^{\tau}((id_C\times f)^*F\otimes p_1^*\mo_{y})=ch^{\tau}((id_C\times f)^*F\otimes p_1^*\mo_{y}).\]
Therefore by covariance of $\tau$ we have
\begin{eqnarray}\widetilde{p}_{2,*}(ch^{\tau}((id_C\times f)^*F\otimes p_1^*\mo_{y}))&=&\widetilde{p}_{2,*}(p_1^*(td(T_C))\cap ch^{\tau}((id_C\times f)^*F\otimes p_1^*\mo_{y}))\nonumber\\
&=&ch^{\tau}(R\widetilde{p}_{2,*}((id_C\times f)^*F\otimes p_1^*\mo_{y}))\nonumber\\
&=&ch^{\tau}(f^*F_y)=[\widetilde{J}],\nonumber
\end{eqnarray}
where the last equality is because $f^*F_x\cong f^*\mo_J\cong\mo_{\widetilde{J}}$ and $f^*F_y$ lies in the same flat family with $f^*F_x$ parametrized by the connected scheme $C$.

By (\ref{xcc2}), it suffices to show that the degree $2(g-i)$ terms of $\widetilde{p}_{2,*}(ch^{\tau}((h\times id_{\widetilde{J}})_*\widetilde{F}))\in H_*(\widetilde{J})$ are zero for all $i\geq 2$.

\emph{Step 3.} Since $h$ is affine, $(h\times id_{\widetilde{J}})_*$ is the derived pushforward and by the covariance of $\tau$ we have 
\begin{eqnarray}\label{xcc3}ch^{\tau}((h\times id_{\widetilde{J}})_*(\widetilde{F}))&=&\tau((h\times id_{\widetilde{J}})_*\widetilde{F})p_1^*td(-T_{C})\widetilde{p}_2^*td(-T_{\widetilde{J}})\nonumber\\
&=&(h\times id_{\widetilde{J}})_*(\tau(\widetilde{F}))p_1^*td(-T_{C})\widetilde{p}_2^*td(-T_{\widetilde{J}})\nonumber\\
&=& (h\times id_{\widetilde{J}})_*(ch^{\tau}(\widetilde{F})\widetilde{p}_1^*(td(T_{\widetilde{C}}))\widetilde{\widetilde{p}}_2^*(td(T_{\widetilde{J}})))p_1^*td(-T_{C})\widetilde{p}_2^*td(-T_{\widetilde{J}})\nonumber\\
&=&(h\times id_{\widetilde{J}})_*(ch^{\tau}(\widetilde{F})\widetilde{p}_1^*(td(T_{\widetilde{C}})))p_1^*td(-T_{C})
\end{eqnarray}
where the last equality is obtained by the projection formula and $\widetilde{\widetilde{p}}_2=\widetilde{p}_2\circ (h\times id_{\widetilde{J}})$.

Since $x$ is a smooth point in $C$, we may also denote $x$ its preimage in $\widetilde{C}$.  Then $\widetilde{F}_x=f^*(F_x)\cong f^*(\mo_J)\cong\mo_{\widetilde{J}}$.   Denote by $U\subset C\times J$ the biggest open subset where $F$ is locally free.  Then the complement of $U$ in $C\times J$ is of codimension $\geq 2$.  Let $V:=(h\times f)^{-1}(U)\subset \widetilde{C}\times \widetilde{J}$.  Then the complement of $V$ in $\widetilde{C}\times \widetilde{J}$ is also of codimension $\geq 2$, this is because $U$ contains $(C\times J^{sm})\cup (C^{sm}\times J)$.  

Let $\imath:U\hookrightarrow C\times J,~\jmath:V\hookrightarrow \widetilde{C}\times \widetilde{J}$ be the two embeddings.  Since $F$ is Cohen-Macaulay,  $F=\imath_*(F|_U)$ by 
(e.g. \cite[Lemma 2.2]{Ari2}).  Since both $C\times J$ and $\widetilde{C}\times \widetilde{J}$ are Cohen-Macaulay, by an analogous argument of Lemma \ref{torf} we have that 
$$\widetilde{F}=(h\times f)^*F=(h\times f)^*(\imath_*(F|_U))=\jmath_*((h\times f)^*(F|_U))=\jmath_*(\widetilde{F}|_V).$$
On the other hand, $F|_U$ is a line bundle and hence so is $\widetilde{F}|_V$.  By the smoothness of $\widetilde{C}\times \widetilde{J}$, we have $\widetilde{F}\cong \det \widetilde{F}$ is a line bundle on $\widetilde{C}\times \widetilde{J}$.  Therefore
\begin{equation}\label{xpf1}ch^{\tau}(\widetilde{F})=ch(\widetilde{F})\cap [\widetilde{C}\times\widetilde{J}]=Exp(c_1(\widetilde{F}))\cap [\widetilde{C}\times\widetilde{J}].
\end{equation}
By K\"unneth Formula, we write $c_1(\widetilde{F})=a_0\otimes b_2+a_1\otimes b_1+a_2\otimes b_0$ where $a_i\in H^i(\widetilde{C}),~b_i\in H^i(\widetilde{J}).$  Because $\widetilde{F}_x\cong \mo_{\widetilde{J}}$, by Riemann-Roch and a direct computation we have
$(a_0\cdot x)b_2=c_1(\widetilde{F}_x)=0$, hence $a_0\otimes b_2=0$. Therefore we have
\[ch^{\tau}(\widetilde{F})=(1+a_1\otimes b_1+a_2\otimes b_2+\frac12a_1^2\otimes b_1^2)\cap[\widetilde{C}\times\widetilde{J}].\]
Since $x\cup a_i=0$ for $i=1,2$, we have
\begin{equation}ch^{\tau}(\widetilde{F})\widetilde{p}_1^*(td(T_{\widetilde{C}}))=(1+a_1\otimes b_1+a_2\otimes b_2+\frac12a_1^2\otimes b_1^2)\cap[\widetilde{C}\times\widetilde{J}]+(1-g_{\widetilde{C}})[x\times\widetilde{J}].
\end{equation}
By (\ref{xcc3}) we have
\begin{eqnarray}\label{xcc}ch^{\tau}((h\times id_{\widetilde{J}})_*(\widetilde{F}))&=&(h\times id_{\widetilde{J}})_*(ch^{\tau}(\widetilde{F})\widetilde{p}_1^*(td(T_{\widetilde{C}})))p_1^*td(-T_{C})\nonumber\\
&=&(h\times id_{\widetilde{J}})_*((1+a_1\otimes b_1+a_2\otimes b_2+\frac12a_1^2\otimes b_1^2)\cap[\widetilde{C}\times\widetilde{J}])\nonumber\\
&+&(g-g_{\widetilde{C}})[x\times \widetilde{J}].
\end{eqnarray}
Therefore we have
\begin{equation}\label{xpf2}\widetilde{p}_{2,*}(ch^{\tau}((h\times id_{\widetilde{J}})_*\widetilde{F}))=(g-g_{\widetilde{C}})[\widetilde{J}]+(a_2\cap[\widetilde{C}])b_2\cap[\widetilde{J}]+(\frac12a_1^2\cap[\widetilde{C}])b_1^2\cap[\widetilde{J}].\end{equation}
The RHS of (\ref{xpf2}) only contains non-trivial terms in degree $2g$ and $2g-2$.

The last thing left to prove is that $c_1(E_n)$ is independent of $n$.  This can be seen from the fact that $f^*ch_1(E_n)$ is independent of $n$ by (\ref{rrs1}), (\ref{xcc2}), (\ref{tts}) and (\ref{xpf2}).  The proposition is proved.
\end{proof}


Yao Yuan\\
Beijing National Center for Applied Mathematics,\\
Academy for Multidisciplinary Studies, \\
Capital Normal University, 100048, Beijing, China\\
E-mail: 6891@cnu.edu.cn.
\end{document}